\definecolor{mypink1}{rgb}{0.158, 0.488, 0.178}
\definecolor{suput}{rgb}{0.058, 0.488, 0.08}
\DeclareMathAlphabet{\pazocal}{OMS}{zplm}{m}{n}
\newtheorem{lemma}{Lemma}[section]
\newtheorem{corollary}[lemma]{Corollary}
\newtheorem{theorem}[lemma]{Theorem}
\newtheorem{proposition}[lemma]{Proposition}
\newtheorem{remark}[lemma]{Remark}
\newtheorem{definition}[lemma]{Definition}
\newtheorem{example}[lemma]{Example}
\newtheorem{notation}[lemma]{Notation}
\newtheorem{corolary}[lemma]{Corollary}
\def\a{\alpha}
\def\b{\beta}
\def\com{\mathop{\text{Com}}}
\newcommand{\U}{{\mathcal{U}}}
\newcommand{\CU}{{\mathcal{CU}}}
\newcommand{\Ker}{{\rm{Ker}}}
\newcommand{\N}{{\mathbb{N}}}
\newcommand{\Z}{{\mathbb{Z}}}
\newcommand{\K}{{\mathbb{K}}}
\newcommand{\Q}{{\mathbb{Q}}}
\newcommand{\uloopr}[1]{\ar@'{@+{[0,0]+(-4,5)}@+{[0,0]+(0,10)}@+{[0,0] +(4,5)}}^{#1}}
\def\aut{\mathop{\text{Aut}}}
\def\der{\mathop{\text{Der}}}
\def\ang#1{\langle #1\rangle}
\definecolor{turquoise2}{rgb}{0,0.898039,0.933333}
\definecolor{magenta}{rgb}{1,0,1}
\definecolor{olivedrab}{rgb}{0.419608,0.556863,0.137255}
\definecolor{purple2}{rgb}{0.568627,0.172549,0.933333}
\definecolor{amethyst}{rgb}{0.6, 0.4, 0.8}
\definecolor{ao(english)}{rgb}{0.0, 0.5, 0.0}
\definecolor{atomictangerine}{rgb}{1.0, 0.6, 0.4}
\definecolor{amber(sae/ece)}{rgb}{1.0, 0.49, 0.0}
\definecolor{alizarin}{rgb}{0.82, 0.1, 0.26}
\definecolor{auburn}{rgb}{0.43, 0.21, 0.1}
\definecolor{aqua}{rgb}{0.0, 1.0, 1.0}
\begin{document}

\subjclass[2010] {17D92, 17A60, 17A36} \keywords{evolution algebra, classification via squares, automorphisms, derivations, associative representations}

\title[Squares and associative representations in evolution algebras]{Squares and associative representations of two dimensional evolution algebras.}

\author[M. Gon\c calves]{Maria Inez Cardoso Gon\c calves}
\address{M. I. Cardoso Gon\c calves: Departamento de Matem\'atica, Universidade Federal de Santa Catarina, Florian\'opolis, SC, 88040-900 - Brazil}
\email{maria.inez@ufsc.br}

\author[D. Gon\c calves]{Daniel Gon\c calves}
\address{D. Gon\c calves: Departamento de Matem\'atica, Universidade Federal de Santa Catarina, Florian\'opolis, SC, 88040-900 - Brazil}
\email{daemig@gmail.com}

\author[D. Mart\'\i n]{Dolores Mart\'\i n Barquero}
\address{D. Mart\'\i n Barquero: Departamento de Matem\'atica Aplicada, Escuela de Ingenier\'\i as, Universidad de M\'alaga, Campus de Teatinos s/n. 29071 M\'alaga.   Spain.}
\email{dmartin@uma.es}

\author[C. Mart\'\i n]{C\'andido Mart\'\i n Gonz\'alez}
\address{C\'andido Mart\'\i n Gonz\'alez: Departamento de \'Algebra Geometr\'{\i}a y Topolog\'{\i}a, Fa\-cultad de Ciencias, Universidad de M\'alaga, Campus de Teatinos s/n. 29071 M\'alaga.   Spain.}
\email{candido\_m@uma.es}

\author[M. Siles Molina]{Mercedes Siles Molina}
\address{M. Siles Molina: Departamento de \'Algebra Geometr\'{\i}a y Topolog\'{\i}a, Fa\-cultad de Ciencias, Universidad de M\'alaga, Campus de Teatinos s/n. 29071 M\'alaga.   Spain.}
\email{msilesm@uma.es}

\begin{abstract}
We associate an square to any two dimensional evolution algebra. This geometric object is uniquely determined, does not depend on the basis  and describes the structure and the behaviour of the algebra. We determine the identities of degrees at most four, as well as derivations and automorphisms. We look at the group of automorphisms as an algebraic group, getting in this form a new algebraic invariant. The study of associative representations of evolution algebras is also started and we get faithful representations for most two-dimensional evolution algebras. In some cases we prove that faithful commutative and associative representations do not exist, giving raise to the class of what could be termed as \lq\lq exceptional\rq\rq\ evolution algebras (in the sense of not admitting a monomorphism to an associative algebra with deformed product).
\end{abstract}

\maketitle

\section{Introduction}

Evolution algebras have connections to dynamical systems, graph theory, Markov process and others, as pointed out by Tian in his book \cite{Tian}. In particular, two dimensional evolution algebras are used as a model for the self-reproduction of alleles in non-Mendelian Genetics (see \cite{VT}). It is in this paper that evolution algebras first appear in 2006.

Due to its importance, it is interesting to understand in deep the mathematical nature of evolution algebras and the use of visual representations of these algebras can help. This was one of the reasons to associate a graph to any evolution algebra and conversely (as done in \cite{Tian, EL}). However, there is not a unique graph for each evolution algebra, even for the two-dimensional ones, as a change of basis can produce graphs of a very different nature. The unicity holds only for perfect evolution algebras (see \cite{CSV1}). This has been the motivation for our search and finding of a geometric (and combinatorial) object, which we call ``square" and  determines in a univocal way a two-dimensional evolution algebra.  An square is an equivalence class (or a union of them) whose combinatorial objects are equivalent to graphs. In this form there is a uniquely defined square associated to a two dimensional evolution algebra.  We hope that such an interplay will grow in a similar way to the celebrated exchange of information between Leavitt path algebras and their associated graphs (see \cite{AAS}).

To illustrate the use of the squares we prove that simplicity of a two dimensional evolution algebra is equivalent to a geometric property of the associated square: the top and the bottom edges appear (Proposition \ref{Oporto}). We also recover the classification of two-dimensional evolution algebras, done in \cite{evo9} for the complex field, in \cite{YC} over an arbitrary field with mild conditions, and in \cite{FFN} without  restriction on the characteristic of the based field, paper that we discovered after we completed the classification. Our approach is very different. The method  used in \cite{FFN} relay on computer aided calculations while ours is based on the squares and does not use a computer.

The dimension of the (algebraic) group of automorphisms of a finite dimensional algebra is an invariant. Thus, if we visualize the group of automorphisms of an evolution algebra as an algebraic group, we get more information than looking at it merely as an abstract group. In the perfect case the algebraic group of a finite dimensional evolution algebra has zero dimension (as can be derived from \cite[Theorem 4.4]{EL}). For two-dimensional non-perfect evolution algebras their automorphisms groups have nonzero dimension, with only one exception (see Table \ref{automorphisms_dev}). \textcolor{magenta}{}

Further to the classification we completely describe the group of automorphisms and the derivations of a two dimensional evolution algebra. The study of the "group" of automorphisms from the viewpoint 
of algebraic groups is much richer than the mere study of automorphism groups as abstract groups. For instance, dealing with the algebraic group of automorphism we can speak of dimension of the group, which is an invariant. Also we could study connectedness or other topological properties which are not at hand in the theory of (abstract) groups.  
We find that the Hopf algebra representing the algebraic group of automorphisms of perfect evolution algebras (in dimension $2$) is \'etale except in one case. This fact implies the shortage of derivations in the perfect case.

We end the paper with a study of associative representations of two dimensional evolution algebras (the concept of associative representations of a nonassociative algebra was recently introduced by Kornev and Shestakov in \cite{KS}). We characterize the universal representation of an evolution algebra of dimension $n$ as a quotient of the free $\ast$ associative algebra in $n$ variables and, for two dimensional evolution algebras, we study when the universal representation is faithful. Furthermore, we introduce the concept of commutative, associative universal faithful representation and describe such algebra (when it exists) for all evolution algebras of dimension two.

The paper is organized as follows: In Section~2 we introduce the square of an evolution algebra and we use then in Section~3 to classify evolution algebras of dimension 2. We describe identities in evolution algebras of dimension 2 in Section~4 and describe the group of automorphisms (as an algebraic group) and the derivations of an evolution algebra of dimension 2 in Section~5. Finally, in Section~6 we study associative representations of two dimensional evolution algebras. We have included at the end of the paper a table containing a summary of notations for the different isomorphic classes with their multiplication tables relative to specific natural basis, and an appendix which contains the codes used in the study of the associative representations.

Before we proceed we recall a few key concepts regarding evolution algebras and set up notation.

In this paper $\K$ we will stand for an arbitrary field. The notation $\K^\times$ will stand for $\K\setminus \{0\}$.

An \emph{evolution algebra} $A$ over a field $\K$  is a $\K$-algebra  provided with a basis $B = \{e_i\}_{i \in I}$
such that $e_ie_j = 0$ whenever $i\ne j$. Such a basis $B$ is called a \emph{natural basis}.

For an evolution algebra $A$,  following \cite[Proposition 2.30]{CSV1}, we know that $A$  is \emph{nondegenerate} (it does not have nonzero ideal $I$,  such that $I^2=0$) if and only if it has a natural basis $B =\{e_1,\cdots ,e_n\}$,  such that $e_i^2\neq 0$ for $1\leq i \leq n$. This is equivalent to say that the same happens for every natural basis.
We will say that $A$ is \emph{perfect} if $A^2=A$.

If we have a $\K$-vector space $V$ and two basis $B$ and $B'$ we will denote by $P_{BB'}$ the \emph{change of basis matrix}, corresponding to a matrix whose columns are the coordinates of the vectors of the basis $B$ relative to $B'$. 

Assume that $B$ and $B'$ are natural basis of a finite dimensional evolution algebra $A$. The relationship between the structure matrices relative to $B$ and $B'$ is given in \cite[Theorem 2.2]{CSV}; concretely,
$$M_{B'}=P_{BB'}M_BP^{(2)}_{B'B},$$
where for a matrix $Q=(q_{ij})$,  $Q^{(2)}=(q^2_{ij})$ and $M_B=(\omega_{ij})$ is the structure matrix relative to the basis $B$, i.e., $e_j^2=\sum_{i}\omega_{ij}e_{i}$.

\section{The square associated to a two dimensional evolution algebra}

In this section we associate a square to any two dimensional evolution algebra $A$.
Assume that $B=\{e_1, e_2\}$ is a natural basis for $A$ and that the structure matrix is $M_B=(\omega_{ij})$. We associate the diagram that follows.

\begin{equation}\tag{S}\label{S}
\centerline{
    $$ \quad {\xymatrix{
\bullet^{v'_1} \ar@{-}[r]^{\omega_{12}} \ar@{-}[d]_{\omega_{11}}&   \bullet^{v'_2}\ar@{-}[d]^{\omega_{22}} \\
\bullet^{v_1}   \ar@{-}[r]_{\omega_{21}}       &         \bullet^{v_2} 
}}$$}
\end{equation}

When $\omega_{ij}=0$, the corresponding side does not appear.
For instance, for an algebra with structure matrix $M_B=\begin{pmatrix} 1 & 0 \\ 1 & 1 \end{pmatrix}$, we get the following diagram:

$${\xymatrix{
\bullet^{v'_1}  \ar@{-}[d]_{1}&   \bullet^{v'_2}\ar@{-}[d]^{1} \\
\bullet^{v_1}   \ar@{-}[r]_1       &         \bullet^{v_2} 
}}$$

In order to simplify the notation we will write neither the names of the vertices nor the $\omega_{ij}'s$. This diagram will be called a \emph{pseudo-square}.
As with graphs, a change of basis may change the pseudo-square of an algebra. To fix this we make the following definition.

\begin{definition} 
\rm
Let $A$ be an evolution algebra. The \emph{square of} A is the set of all pseudo-squares of $A$. The square of $A$ is obtained by considering all possible natural basis of $A$ and computing the associated pseudo-squares. We denote the square of $A$ by $\mathfrak{S}(A)$.
\end{definition}

As we will see, for perfect evolution algebras each square is a set with only two elements: One pseudo-square and its rotation by an angle of $\pi$. In the non-perfect case more diverse phenomena appear: There are two squares with four elements each. In any case, there is only one square associated to any two dimensional evolution algebra. Conversely, given a square S we associate to it a family of two dimensional evolution algebras (all the algebras whose pseudo-squares belong to S).

In order to better describe the squares associated to an evolution algebra we introduce the following notation. Let $D_2$ be the set of all pseudo-squares arising from algebras of dimension 2 ($D_2$ has cardinal 16). Notice that we have an action of $\Z_2=\{\pm 1\}$ on $D_2$ given by rotation by $\pi$ that induces an (orbit) equivalence in $D_2$. There are 10 equivalence classes that we describe in the table that follows. Denote by $\mathcal D_2$ the set of these equivalence classes.
\begin{center}
\begin{table}[!ht]
\scalebox{0.8}{
\begin{tabular}{|c|cc||cc|c|}
\hline
$\mathfrak{D}_0$ & ${\xymatrix{
\bullet &   \bullet \\
\bullet         &         \bullet
}}$ & 
 & ${\xymatrix{
\bullet\ &   \bullet \ar@{-}[d]\\
\bullet  \ar@{-}[r]       &         \bullet
}}$ & ${\xymatrix{
\bullet\ar@{-}[d] &   \bullet\ar@{-}[l] \\
\bullet         &         \bullet
}}$ & $\mathfrak{D}_8$\\ 
\hline
$\mathfrak{D}_7$ & ${\xymatrix{
\bullet \ar@{-}[d]&   \bullet \\
\bullet         &         \bullet
}}$ & ${\xymatrix{
\bullet&   \bullet\ar@{-}[d] \\
\bullet        &         \bullet
}}$ 
& ${\xymatrix{
\bullet\ar@{-}[d]  & \bullet
\\
\bullet  \ar@{-}[r]       &         \bullet
}}$ & ${\xymatrix{
\bullet\ar@{-}[r] &   \bullet\ar@{-}[d] \\
\bullet         &         \bullet
}}$ & $\mathfrak{D}_9$\\ 
\hline
$\mathfrak{D}_6$ & ${\xymatrix{
\bullet&   \bullet\\
\bullet  \ar@{-}[r]       &         \bullet
}}$ & ${\xymatrix{
\bullet\ar@{-}[r] &   \bullet \\
\bullet        &         \bullet
}}$ & ${\xymatrix{
\bullet\ar@{-}[r] \ar@{-}[d]&   \bullet\ar@{-}[d] \\
\bullet        &         \bullet
}}$ & ${\xymatrix{
\bullet \ar@{-}[d]&   \bullet\ar@{-}[d] \\
\bullet  \ar@{-}[r]       &         \bullet
}}$ & $\mathfrak{D}_3$ \\ 
\hline
$\mathfrak{D}_1$ & ${\xymatrix{
\bullet \ar@{-}[d]&   \bullet\ar@{-}[d] \\
\bullet       &         \bullet
}}$ & 
 & ${\xymatrix{
\bullet\ar@{-}[r] \ar@{-}[d]&   \bullet \\
\bullet  \ar@{-}[r]       &         \bullet
}}$ & ${\xymatrix{
\bullet\ar@{-}[r] &   \bullet\ar@{-}[d] \\
\bullet  \ar@{-}[r]       &         \bullet
}}$ & $\mathfrak{D}_4$\\ 
\hline
$\mathfrak{D}_2$ & ${\xymatrix{
\bullet\ar@{-}[r] &   \bullet \\
\bullet  \ar@{-}[r]       &         \bullet
}}$ & 
& ${\xymatrix{
\bullet\ar@{-}[r] \ar@{-}[d]&   \bullet\ar@{-}[d] \\
\bullet  \ar@{-}[r]       &         \bullet
}}$ & & $\mathfrak{D}_5$ \\ 
\hline
\end{tabular}}
\medskip
\caption{Pseudo-squares and the classes by the $\Z_2$ action.}\label{DTDEA}
\end{table}
\end{center}

Next we prove that if $T$ is a pseudo-square in the square of $A$ then its rotation by $\pi$ is also in the square of $A$.

\begin{lemma}\label{isomrotationpi}
The diagrams \eqref{S} and $$ \quad {\xymatrix{
\bullet^{v'_1} \ar@{-}[r]^{\omega_{21}} \ar@{-}[d]_{\omega_{22}}&   \bullet^{v'_2}\ar@{-}[d]^{\omega_{11}} \\
\bullet^{v_1}   \ar@{-}[r]_{\omega_{12}}       &         \bullet^{v_2} 
}}$$
which have been obtained by the action of $\Z_2$ over ${D}_2$, correspond to isomorphic evolution algebras. 
\end{lemma}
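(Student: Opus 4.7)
The plan is to exhibit an explicit algebra isomorphism between the two evolution algebras described by the diagrams. The geometric intuition is transparent: a rotation by $\pi$ in the plane swaps the role of the ``top'' and the ``bottom'', and of the ``left'' and the ``right'', so it should correspond, at the algebraic level, to the reordering $e_1\leftrightarrow e_2$ of the natural basis. So the first step is just to identify the right candidate map, namely the linear map $\phi$ determined by $\phi(e_1)=f_2$ and $\phi(e_2)=f_1$, where $\{e_1,e_2\}$ is the natural basis giving the first pseudo-square and $\{f_1,f_2\}$ is the one giving the rotated pseudo-square.

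The second step is the verification. From the reading rule $e_j^2=\sum_i \omega_{ij}e_i$, the first diagram gives $e_1^2=\omega_{11}e_1+\omega_{21}e_2$ and $e_2^2=\omega_{12}e_1+\omega_{22}e_2$, while the rotated diagram gives $f_1^2=\omega_{22}f_1+\omega_{12}f_2$ and $f_2^2=\omega_{21}f_1+\omega_{11}f_2$. Since both bases are natural, $e_ie_j=0$ and $f_if_j=0$ for $i\ne j$, so $\phi$ automatically preserves these products; it remains only to check $\phi(e_i^2)=\phi(e_i)^2$ for $i=1,2$, which is a direct substitution.

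Alternatively, and perhaps more in keeping with the language of the paper, one can apply the change-of-basis formula $M_{B'}=P_{BB'}M_BP_{B'B}^{(2)}$ with the permutation matrix $P=\bigl(\begin{smallmatrix}0&1\\1&0\end{smallmatrix}\bigr)$, which satisfies $P^{(2)}=P=P^{-1}$. A straightforward multiplication yields $PM_BP=\bigl(\begin{smallmatrix}\omega_{22}&\omega_{21}\\\omega_{12}&\omega_{11}\end{smallmatrix}\bigr)$, which is exactly the structure matrix encoded by the rotated diagram. In particular the rotated diagram is itself a pseudo-square of the same algebra $A$ (with respect to the basis $\{e_2,e_1\}$), so the two diagrams necessarily correspond to isomorphic evolution algebras.

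There is no real obstacle here: the statement is essentially the assertion that permuting the labels of the elements of a natural basis is an admissible change of basis, and everything else is a bookkeeping check on the positions of the $\omega_{ij}$'s in the diagram. The only small care needed is to match the geometric positions of the edges in the rotated picture with the entries of the structure matrix according to the convention fixed in \eqref{S}.
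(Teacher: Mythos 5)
Your proof is correct and takes essentially the same approach as the paper: the paper's proof is precisely the one-line observation that the linear map swapping the two natural basis vectors ($\varphi(e_i)=f_j$ for $i\neq j$) is an algebra isomorphism. Your explicit verification of $\phi(e_i^2)=\phi(e_i)^2$ and the alternative check via $PM_BP^{(2)}$ with the permutation matrix are just more detailed versions of the same idea.
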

\begin{proof}
Let $\{e_1, e_2\}$ and $\{f_1, f_2\}$ be natural basis of two evolution algebras $A_1$, $A_2$ whose associated diagrams are the two in the statement. The map $\varphi: A_1 \to A_2$ given by $\varphi(e_i)=f_j$, for $i, j \in \{1,2\}$ with $i\neq j$, is an isomorphism between both algebras. 
\end{proof}

Below we use the squares to characterize the simplicity of two dimensional evolution algebras.

\begin{proposition}\label{Oporto}
Let $A$ be a two dimensional evolution algebra.
\begin{enumerate}[\rm (i)]
    \item If $A^2=A$, then $A$ is simple if the  top and the bottom edges in every pseudo-square of $\mathfrak{S}(A)$ appear.
    \item If $A$ is simple, then the  top and the bottom edges in every  pseudo-square of $\mathfrak{S}(A)$ appear.
\end{enumerate}
\end{proposition}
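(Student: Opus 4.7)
The plan is to exploit the translation dictionary between edges and structure constants: in the diagram \eqref{S}, the top edge corresponds to $\omega_{12}$ and the bottom edge to $\omega_{21}$, so ``top and bottom edges appear'' is the same as $\omega_{12}\neq 0$ and $\omega_{21}\neq 0$. In a two dimensional algebra any proper nonzero ideal is one dimensional, so every obstruction to simplicity will be a line $\K u$.

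For \textnormal{(ii)}, I would argue by contrapositive. Suppose some pseudo-square of $\mathfrak{S}(A)$ lacks the top edge, i.e.\ there is a natural basis $\{e_1,e_2\}$ with $\omega_{12}=0$. Then $e_2^2=\omega_{22}e_2\in \K e_2$, and together with $e_1e_2=e_2e_1=0$ this shows that $\K e_2$ is a nonzero proper ideal, so $A$ is not simple. If instead the bottom edge is missing, the same argument produces the ideal $\K e_1$ (alternatively one invokes Lemma \ref{isomrotationpi}, which after the $\Z_2$ action reduces to the previous case).

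For \textnormal{(i)}, fix a natural basis $B=\{e_1,e_2\}$ whose pseudo-square has both top and bottom edges, so $\omega_{12}\omega_{21}\neq 0$. Note that $A^2=\K e_1^2+\K e_2^2$, so the hypothesis $A^2=A$ is equivalent to $\det M_B=\omega_{11}\omega_{22}-\omega_{12}\omega_{21}\neq 0$. Assume for contradiction that a proper nonzero ideal $I=\K u$ exists, with $u=\alpha e_1+\beta e_2$. Imposing $e_1u,e_2u\in I$ gives $\alpha e_1^2\in \K u$ and $\beta e_2^2\in \K u$. A brief case split eliminates the coordinate zeros: if $\alpha=0$ then $\beta\neq 0$ and $e_2u=\beta e_2^2\in\K e_2$ forces $\beta\omega_{12}=0$, contradicting $\omega_{12}\neq 0$; the case $\beta=0$ is symmetric and contradicts $\omega_{21}\neq 0$. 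Hence $\alpha,\beta\neq 0$, and the proportionality of $\alpha e_1^2$ and $\beta e_2^2$ with $u$ yields
\[
\omega_{11}\beta=\omega_{21}\alpha,\qquad \omega_{22}\alpha=\omega_{12}\beta.
\]
Multiplying these equations and cancelling $\alpha\beta$ gives $\omega_{11}\omega_{22}=\omega_{12}\omega_{21}$, i.e.\ $\det M_B=0$, contradicting perfectness.

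The only genuinely delicate step is the final contradiction in \textnormal{(i)}: one must verify that both coordinates of $u$ are nonzero before extracting the determinantal identity, since the cheap one-variable cases secretly carry the information that the top or bottom edge is absent. Once that is noted, the conclusion falls out of one multiplication.
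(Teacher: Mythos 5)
Your proof is correct and follows essentially the same route as the paper's: both translate the top/bottom edges into the structure constants $\omega_{12},\omega_{21}$, use that $A^2=A$ is equivalent to $\det M_B\neq 0$, prove (ii) by exhibiting $\K e_i$ as an ideal when an edge is missing, and prove (i) by multiplying a generator of a putative proper ideal by $e_1$ and $e_2$. The only difference is presentational: you run (i) as a contradiction ending in $\det M_B=0$, while the paper argues directly that the ideal contains the linearly independent vectors $e_1^2$ and $e_2^2$ and hence is all of $A$.
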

\begin{proof}
Let $\{e_1, e_2\}$ be a natural basis of $A$ and let $e_1^2=\alpha e_1 + \beta e_2$; $e_2^2=\gamma e_1 + \mu e_2$.
(i) We are assuming $\beta, \gamma \neq 0$. Since $A^2=A$, we also know that $\alpha\mu-\gamma\beta\neq 0$. Let $I$ be a nonzero ideal generated by an element $ae_1+be_2$, with $a, b\in \K$. If $a=0$ or $b=0$ it is easy to see that $I=A$, so, suppose $a, b\neq 0$. Multiplying $ae_1+be_2$ by $e_1$ we get that $\alpha e_1+\beta e_2\in I$. Now, multiplying $ae_1+be_2$ by $e_2$ we obtain $\gamma e_1 + \mu e_2 \in I$. Since the two vectors $\alpha e_1+\beta e_2$ and $\gamma e_1 + \mu e_2 \in I$ are linearly independent (by the hypothesis), we get $I=A$.

(ii) Note that the simplicity of $A$ implies $A^2=A$, therefore $\alpha \mu -\beta \gamma \neq 0$. We want to show that $\beta$ and $\gamma$ are nonzero. Assume on the contrary that $\b=0$. Then $e_1^2=\a e_1$ implying that $\K e_1$ is an ideal of $A$, a contradiction. Similarly, being $\gamma=0$ contradicts the simplicity of $A$.
\end{proof}

\section{Classification of two dimensional evolution algebras by squares}
We will use the squares we have introduced to classify  two dimensional evolution algebras in full generality, with no restriction over the field or any other.
The squares that appear (and the pseudo-squares they consist of) are the following.

\begin{center}
\begin{table}[!ht]
\scalebox{0.8}{
\begin{tabular}{|c|ccccc|c|}
\hline
$\mathfrak{S}_0$& ${\xymatrix{
\bullet &   \bullet \\
\bullet         &         \bullet
}}$ & 
 \vline ${\xymatrix{
\bullet\ar@{-}[r] \ar@{-}[d]&   \bullet\ar@{-}[d] \\
\bullet  \ar@{-}[r]       &         \bullet
}}$&
 & ${\xymatrix{
\bullet &   \bullet \ar@{-}[d]\\
\bullet  \ar@{-}[r]       &         \bullet
}}$ & ${\xymatrix{
\bullet\ar@{-}[r] \ar@{-}[d]&   \bullet \\
\bullet         &         \bullet
}}$ & $\mathfrak{S}_8$
\\
\hline
$\mathfrak{S}_7$&${\xymatrix{
\bullet \ar@{-}[d]&   \bullet \\
\bullet         &         \bullet
}}$ & ${\xymatrix{
\bullet&   \bullet\ar@{-}[d] \\
\bullet        &         \bullet
}}$ 
& 
& ${\xymatrix{
\bullet\ar@{-}[d]&   \bullet \\
\bullet  \ar@{-}[r]       &         \bullet
}}$ & ${\xymatrix{
\bullet\ar@{-}[r] &   \bullet \ar@{-}[d] \\
\bullet         &         \bullet
}}$ & \\ 
\hline
 $\mathfrak{S}_6$&${\xymatrix{
\bullet&   \bullet\\
\bullet  \ar@{-}[r]       &         \bullet
}}$ & ${\xymatrix{
\bullet\ar@{-}[r] &   \bullet \\
\bullet        &         \bullet
}}$ 
&  \vline & ${\xymatrix{
\bullet\ar@{-}[r] \ar@{-}[d]&   \bullet\ar@{-}[d] \\
\bullet        &         \bullet
}}$ & ${\xymatrix{
\bullet \ar@{-}[d]&   \bullet\ar@{-}[d] \\
\bullet  \ar@{-}[r]       &         \bullet
}}$ & $\mathfrak{S}_3$\\ 
\hline
$\mathfrak{S}_1$ &${\xymatrix{
\bullet \ar@{-}[d]&   \bullet\ar@{-}[d] \\
\bullet       &         \bullet
}}$ & &\vline& 
  
 ${\xymatrix{
\bullet\ar@{-}[r] \ar@{-}[d]&   \bullet \\
\bullet  \ar@{-}[r]       &         \bullet
}}$ 
 & ${\xymatrix{
\bullet\ar@{-}[r] &   \bullet\ar@{-}[d] \\
\bullet  \ar@{-}[r]       &         \bullet
}}$ & $\mathfrak{S}_4$\\ 
\hline
$\mathfrak{S}_2$ & ${\xymatrix{
\bullet\ar@{-}[r] &   \bullet \\
\bullet  \ar@{-}[r]       &         \bullet
}}$ & 
& \vline
& ${\xymatrix{
\bullet\ar@{-}[r] \ar@{-}[d]&   \bullet\ar@{-}[d] \\
\bullet  \ar@{-}[r]       &         \bullet
}}$ &  &$\mathfrak{S}_5$\\ 
\hline
\end{tabular}}
\medskip
\caption{Squares for two dimensional evolution algebras.}\label{DTDEAc}
\end{table}
\end{center}

 We start the classification of two dimensional evolution algebras by analyzing the perfect case. 

\subsection{Classification of perfect evolution algebras}

For a two dimensional perfect evolution $\K$-algebra, it is known that 
for any two natural bases $\{v_1,v_2\}$ and $\{v_1',v_2'\}$ there are nonzero scalars $r,s\in\K^\times$ such that $v_1'=r v_1$ and $v_2'= s v_2$ or
$v_1'=r v_2$ and $v_2'= s v_1$, see \cite[Theorem 4.4]{EL}. This implies that the square of $A$ is a uniquely determined element of $\mathcal D_2$. For each of the five elements of $\mathcal D_2$ that  appear in the classification of these algebras we choose one representative. These are the following:
\smallskip

\begin{center}
$\begin{matrix}

\xygraph{
!{<0cm,0cm>;<1cm,0cm>:<0cm,1cm>::}
!{(0,0)}*+{\bullet}="a"
!{(1,0)}*+{\bullet}="b"
!{(0,1)}*+{\bullet}="c"
!{(1,1)}*+{\bullet}="d"
"a"-"c" "b"-"d"} &
\xygraph{
!{<0cm,0cm>;<1cm,0cm>:<0cm,1cm>::}
!{(0,0)}*+{\bullet}="a"
!{(1,0)}*+{\bullet}="b"
!{(0,1)}*+{\bullet}="c"
!{(1,1)}*+{\bullet}="d"
"a"-"b" "c"-"d"} &
\xygraph{
!{<0cm,0cm>;<1cm,0cm>:<0cm,1cm>::}
!{(0,0)}*+{\bullet}="a"
!{(1,0)}*+{\bullet}="b"
!{(0,1)}*+{\bullet}="c"
!{(1,1)}*+{\bullet}="d"
 "b"-"d" "d"-"c" "a"-"c"} &
 \xygraph{
!{<0cm,0cm>;<1cm,0cm>:<0cm,1cm>::}
!{(0,0)}*+{\bullet}="a"
!{(1,0)}*+{\bullet}="b"
!{(0,1)}*+{\bullet}="c"
!{(1,1)}*+{\bullet}="d"
 "b"-"d" "a"-"b" "c"-"d"} &
 \xygraph{
!{<0cm,0cm>;<1cm,0cm>:<0cm,1cm>::}
!{(0,0)}*+{\bullet}="a"
!{(1,0)}*+{\bullet}="b"
!{(0,1)}*+{\bullet}="c"
!{(1,1)}*+{\bullet}="d"
 "a"-"b" "a"-"c" "c"-"d" "d"-"b"}\\
 \mathfrak{D}_1 & \mathfrak{D}_2 & \mathfrak{D}_3 & 
 \mathfrak{D}_4 & \mathfrak{D}_5
\end{matrix}$
\end{center}

By abuse of notation, in what follows, when we speak about a square we will refer to the chosen representative in the class. 

In order to simplify the computations that follow, given an evolution algebra we will make a change of basis so that the entries of its structure matrix are mostly zeros and ones. This is the following lemma.

\begin{lemma}\label{nerja}
Let $A$ be an evolution algebra such that $A^2=A$. Whenever the square associated to $A$ is $\mathfrak{D}_i$, for $i=1, \dots 5$, there is a change of basis matrix such that the structure matrix is one of the following:
\begin{enumerate}[\rm (i)]
\item 
$\begin{pmatrix}
1 & 0 \\ 0 & 1
\end{pmatrix}$ when the square is $\mathfrak{D}_1$. Denote this algebra by $A_1$
\item
$\begin{pmatrix}
0 & \alpha \\ 1 & 0
\end{pmatrix}$, with $\alpha \in \K^\times$, when the square is $\mathfrak{D}_2$. Denote this algebra by $A_{2,\alpha}$.
\item $\begin{pmatrix}
1 & \alpha \\ 0 & 1
\end{pmatrix}$, with $\alpha \in \K^\times$, when the square is $\mathfrak{D}_3$. Denote this algebra by $A_{3,\alpha}$. 
\item  $\begin{pmatrix}
0 & 1 \\ \alpha & 1
\end{pmatrix}$, with $\alpha \in \K^\times$, when the square is $\mathfrak{D}_4$. Denote this algebra by $A_{4,\alpha}$.
\item $\begin{pmatrix}
1 & \alpha \\ \beta & 1
\end{pmatrix}$, with $\alpha, \beta \in \K^\times$, $\a\b\ne 1$ when the square is $\mathfrak{D}_5$. Denote this algebra by $A_{5,\alpha, \beta}$.

\end{enumerate}
\end{lemma}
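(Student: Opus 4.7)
The approach is uniform across the five cases: the square $\mathfrak{D}_i$ dictates the zero pattern of the structure matrix $M_B$, and diagonal scalings of the natural basis provide exactly the freedom needed to normalize the surviving entries. The first step is to read off from each representative of $\mathfrak{D}_i$ which of $\omega_{11}, \omega_{12}, \omega_{21}, \omega_{22}$ are forced to be zero and which lie in $\K^\times$; for instance $\mathfrak{D}_1$ gives $\omega_{12}=\omega_{21}=0$ with the diagonal entries nonzero, while $\mathfrak{D}_5$ forces all four to be nonzero. If the starting natural basis produces a pseudo-square obtained from the chosen representative by rotation by $\pi$, the swap $e_1\leftrightarrow e_2$ (justified by Lemma \ref{isomrotationpi}) reduces matters to the chosen representative, so we may assume the zero pattern is as prescribed.

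Next I would compute the effect of a diagonal change of basis $e_i'=r_i e_i$ with $r_i\in\K^\times$ on $M_B$ using $M_{B'}=P_{BB'}M_B P^{(2)}_{B'B}$. A short calculation yields $\omega_{ii}'=r_i\omega_{ii}$ and $\omega_{ij}'=r_j^2\omega_{ij}/r_i$ for $i\ne j$. With this formula in hand, the normalizations (i)--(v) drop out by choosing $r_i=1/\omega_{ii}$ whenever the corresponding diagonal entry is nonzero. Cases (i), (iii), and (v) thereby normalize both diagonal entries to $1$, leaving off-diagonal values $\alpha=\omega_{11}\omega_{12}/\omega_{22}^2$ (and also $\beta=\omega_{22}\omega_{21}/\omega_{11}^2$ in (v)), which are nonzero precisely because the originals were. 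Case (iv) has only one diagonal entry, so I set $r_2=1/\omega_{22}$ and then $r_1=\omega_{12}/\omega_{22}^2$ to normalize $\omega_{12}'$; in case (ii), with no diagonal entries available, I instead solve $r_2=r_1^2\omega_{21}$ to force $\omega_{21}'=1$, and any choice of the remaining free parameter $r_1$ yields $\alpha=\omega_{12}'=r_1^3\omega_{21}^2\omega_{12}\in\K^\times$.

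The only nontrivial point is the supplementary inequality $\alpha\beta\ne 1$ in case (v). I would deduce it from the hypothesis $A^2=A$, which is equivalent to $\det M_B\ne 0$: since the columns of $M_B$ are the coordinates of $e_1^2$ and $e_2^2$, they span $A$ if and only if they are linearly independent. Plugging in the normalized values gives $\alpha\beta=\omega_{12}\omega_{21}/(\omega_{11}\omega_{22})$, so $\alpha\beta\ne 1$ reads precisely $\omega_{11}\omega_{22}\ne\omega_{12}\omega_{21}$, i.e. $\det M_B\ne 0$. In cases (i)--(iv) the condition $\det M_B\ne 0$ is automatically forced by the zero pattern dictated by $\mathfrak{D}_i$, so no extra check is required. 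Translating perfectness into the inequality $\alpha\beta\ne 1$ in case (v) is the main, and quite mild, obstacle in the argument.
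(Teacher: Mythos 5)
Your proposal is correct and follows essentially the same route as the paper: a diagonal rescaling of the natural basis, with the new entries computed from $M_{B'}=P_{BB'}M_BP^{(2)}_{B'B}$, and the scalars chosen to normalize the diagonal (or, in case (ii), the bottom) entries; your explicit values of $r_1,r_2$ and of the renamed parameters agree with the matrices $P_{B'B}$ the paper writes down. The only additions are your explicit reduction to the chosen representative via Lemma \ref{isomrotationpi} and the derivation of $\alpha\beta\neq 1$ from $\det M_B\neq 0$, both of which the paper leaves implicit and both of which you handle correctly.
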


\begin{proof}
Let $B$ be a natural basis $\{e_1, e_2\}$ such that $e_1^2=\alpha e_1+\beta e_2$ and $e_2^2=\gamma e_1+\delta e_2$. 

(i) Take $B'=\{f_1, f_2\}$ such that $P_{B'B}=
\begin{pmatrix}
\alpha^{-1} & 0 \\ 0 & \delta^{-1}
\end{pmatrix}$.

(ii) Take $B'=\{f_1, f_2\}$ such that $P_{B'B}=
\begin{pmatrix}
1 & 0 \\ 0 & \beta
\end{pmatrix}$ 
and rename $\gamma\beta^2$ as $\alpha$.

(iii) Take $B'=\{f_1, f_2\}$ such that $P_{B'B}=
\begin{pmatrix}
\alpha^{-1} & 0 \\ 0 & \delta^{-1}
\end{pmatrix}$ and rename $\alpha\gamma\delta^{-2}$ as $\alpha$.

(iv) Take $B'=\{f_1, f_2\}$ such that $P_{B'B}=
\begin{pmatrix}
\gamma\delta^{-2} & 0 \\ 0 & \delta^{-1}
\end{pmatrix}$ and rename $\beta\gamma^{2}\delta^{-3}$ as $\alpha$.

(v) Take $B'=\{f_1, f_2\}$ such that $P_{B'B}=
\begin{pmatrix}
\alpha^{-1} & 0 \\ 0 & \delta^{-1}
\end{pmatrix}$ and rename $\gamma\alpha\delta^{-2}$ as $\alpha$ and $\beta\delta\alpha^{-2}$ as $\beta$.
\end{proof}

Next we start the classification of algebras with the same square. This will comprise a series of lemmas, that follow below.

\begin{notation}
\rm
For any $n\in \N$ and any field $\K$, denote the group $\K^\times/(\K^\times)^n$ by $G_n$. For $\rho\in \K^\times$, we write $\overline \rho$ for the class of $\rho$ in $G_n$.

For $n=3$ we define in $G_3$ the relation $\sim$ given by:  $\overline{\a}\sim \overline{\b}$ if and only if $\overline{\a}=\overline{\b}$ or $\overline{\a}=\overline{\b}^2$. This is an equivalence relationship. We denote by $G_3/\sim$ the set of classes and denote by $\widetilde{{\a}}$ each element in this set. Note that, in general, this relation is not associated to any subgroup of $G_3$; in other words, $G_3/\sim$ with the usual operation is not necessarily a group. For an example, consider $G_3$ when $\K$ is the field with four elements. On the contrary, for $G_3$ coming from the field with three elements, $G_3/\sim$ is a group (the trivial one).
\end{notation}

\begin{lemma}\label{caipirinha} Let $A_{2,\alpha}$ be the evolution algebra having a natural basis $\{e_1, e_2\}$ such that $e_1^2=e_2$ and $e_2^2=\alpha e_1$, where $\alpha \in \K^\times$. Then $A_{2,\alpha}$ is isomorphic (as a $\K$-algebra) to $A_{2,\alpha'}$ if and only if $\overline{\alpha} = \overline{\alpha'}$, or $\overline\a=\overline{\a'}^2$, where $\alpha' \in \K^\times$ and classes are considered in $G_3$.
\end{lemma}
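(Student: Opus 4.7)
The plan is to exploit the rigidity of natural bases in the perfect case. First I would note that $A_{2,\a}$ is perfect: since $\a\neq 0$, we have $e_1=\a^{-1}e_2^2\in A_{2,\a}^2$ and $e_2=e_1^2\in A_{2,\a}^2$. Hence \cite[Theorem 4.4]{EL} applies to $A_{2,\a'}$, and every natural basis of $A_{2,\a'}$ arises from the distinguished basis $\{e_1',e_2'\}$ either by a diagonal rescaling or by a coordinate swap followed by rescaling.

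For sufficiency, I would exhibit an explicit isomorphism in each of the two cases. If $\a=r^3\a'$ for some $r\in\K^\times$, define $\varphi\colon A_{2,\a}\to A_{2,\a'}$ by $\varphi(e_1)=re_1'$ and $\varphi(e_2)=r^2e_2'$; then $\varphi(e_1)^2=r^2e_2'=\varphi(e_2)$ and $\varphi(e_2)^2=r^4\a'e_1'=r^3\a'\cdot re_1'=\a\,\varphi(e_1)$. If instead $\a=r^3(\a')^2$, define $\varphi(e_1)=re_2'$ and $\varphi(e_2)=r^2\a'\,e_1'$, and check the products analogously using $(e_2')^2=\a'e_1'$.

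For necessity, given an isomorphism $\varphi\colon A_{2,\a}\to A_{2,\a'}$, put $f_i=\varphi(e_i)$. Since algebra isomorphisms preserve the property $e_ie_j=0$ for $i\ne j$, $\{f_1,f_2\}$ is a natural basis of $A_{2,\a'}$, so by \cite[Theorem 4.4]{EL} either $f_1=re_1'$ and $f_2=se_2'$ for some $r,s\in\K^\times$, or $f_1=re_2'$ and $f_2=se_1'$. In the first case, the identity $f_1^2=\varphi(e_1^2)=\varphi(e_2)=f_2$ forces $s=r^2$ and $f_2^2=\varphi(e_2^2)=\a f_1$ then yields $\a=r^3\a'$, i.e.\ $\overline{\a}=\overline{\a'}$ in $G_3$. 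In the second case the same two identities produce $s=r^2\a'$ and $\a=r^3(\a')^2$, i.e.\ $\overline{\a}=\overline{\a'}^2$.

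The main obstacle is minimal: once perfection is established so that \cite[Theorem 4.4]{EL} can be invoked, the argument reduces to two short bookkeeping computations whose outputs are precisely the two clauses of the stated equivalence, and together they match the relation $\sim$ defined on $G_3$.
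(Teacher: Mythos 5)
Your proof is correct and follows essentially the same route as the paper's: both directions rest on the rigidity of natural bases of perfect two-dimensional evolution algebras (the paper invokes \cite[Corollary 4.7]{EL} where you invoke \cite[Theorem 4.4]{EL}, the same structural fact), and the explicit isomorphisms you write down in the sufficiency direction coincide with those in the paper. Your bookkeeping is, if anything, slightly more carefully spelled out (e.g.\ noting that the image of a natural basis under an isomorphism is again a natural basis), but there is no substantive difference in approach.
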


\begin{proof}
Let $\{e'_1, e'_2\}$ be a natural basis of $A_{2,\alpha'}$ such that ${e'}_1^2= e'_2$ and ${e'}_2^2=\alpha' e'_1$.

Assume first that $A_{2,\alpha}$ is isomorphic (as a $\K$-algebra) to $A_{2,\alpha'}$, and let $\varphi: A_{2,\alpha} \to A_{2,\alpha'}$ be an isomorphism. By \cite[Corollary 4.7]{EL}, and up to a permutation of the elements of the basis $\{e'_1, e'_2\}$, there exist $r, s\in \K^\times$ such that $\varphi(e_1)= re'_1$ and $\varphi(e_2)= se'_2$. By using the fact that  $\varphi$ is a linear map and that $\varphi(e_i^2)=\varphi(e_i)^2$, a computation gives $\alpha= s^3(r^{-1})^3 \alpha'$, i.e., $\overline{\alpha} = \overline{\alpha'}$.

In case $\varphi(e_1)=re'_2$ and $\varphi(e_2)=se'_1$, a computation as before gives $\overline{\a}=\overline{\a'^2}$.

Assume on the contrary that $\overline{\alpha} = \overline{\alpha'}$. Let $\omega\in \K^\times$ be such that $\omega^3=\alpha'\alpha^{-1}$. Take  $r=\omega$ and $s=r^2$. Then the linear map $\varphi: A_{2, \alpha} \to A_{2,\alpha'}$ such that $\varphi(e_1)= re'_1$ and $\varphi(e_2)= se'_2$ defines an isomorphism between both evolution algebras.

Finally, if $\bar\a=\overline{\a'}^2$, i.e., $\alpha=\alpha’^2k^3$ for some $k\in \K^\times$, then the linear map $\varphi: A_{2, \alpha}\to A_{2, \alpha'}$ given by $\varphi(e_1)=ke'_2$ and $\varphi(e_2)=k^2\alpha'e_1'$ gives an algebra isomorphism. 
\end{proof}

\begin{lemma}\label{saquerinha} Let $A_{3, \alpha}$ be the evolution algebra having a natural basis $\{e_1, e_2\}$ such that $e_1^2= e_1$ and $e_2^2=\alpha e_1 + e_2$, where $\alpha \in \K^\times$. Let $\alpha' \in \K^\times$. Then $A_{3,\alpha}$ is isomorphic (as a $\K$-algebra) to $A_{3,\alpha'}$ if and only if $\alpha = \alpha'$.
\end{lemma}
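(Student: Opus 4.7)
The plan is to argue both implications, with the hard direction relying on \cite[Corollary 4.7]{EL} in exactly the same way as in Lemma \ref{caipirinha}.

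First, the direction $(\Leftarrow)$ is immediate: if $\alpha=\alpha'$ then $A_{3,\alpha}$ and $A_{3,\alpha'}$ have identical multiplication tables in the chosen natural basis, so the identity map gives an isomorphism.

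For the converse $(\Rightarrow)$, I assume there is an algebra isomorphism $\varphi: A_{3,\alpha} \to A_{3,\alpha'}$, where $\{e_1',e_2'\}$ is a natural basis of $A_{3,\alpha'}$ with ${e_1'}^2=e_1'$ and ${e_2'}^2=\alpha'e_1'+e_2'$. Since both algebras are perfect (their structure matrices have determinant $1\neq 0$), I can invoke \cite[Corollary 4.7]{EL}: after a possible permutation of the target basis, there are $r,s\in\K^\times$ with either (a) $\varphi(e_1)=re_1'$ and $\varphi(e_2)=se_2'$, or (b) $\varphi(e_1)=re_2'$ and $\varphi(e_2)=se_1'$. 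The proof then becomes a bookkeeping exercise in each case.

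In case (a), comparing $\varphi(e_1^2)=\varphi(e_1)$ with $\varphi(e_1)^2=r^2e_1'$ forces $r=r^2$, hence $r=1$. Then comparing $\varphi(e_2^2)=\alpha e_1'+se_2'$ with $\varphi(e_2)^2=s^2\alpha'e_1'+s^2e_2'$ forces $s=s^2$, hence $s=1$, and consequently $\alpha=\alpha'$, as required. Case (b) is the only delicate one: expanding $\varphi(e_1^2)=\varphi(e_1)$ gives $re_2'$, while $\varphi(e_1)^2=r^2{e_2'}^2=r^2\alpha'e_1'+r^2e_2'$. Matching coefficients yields $r^2\alpha'=0$ and $r=r^2$. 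Because $\alpha'\in\K^\times$ the first equation forces $r=0$, contradicting $r\in\K^\times$. So case (b) cannot occur and we are reduced to case (a), giving $\alpha=\alpha'$.

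The main (and essentially only) obstacle is ruling out the swapping case (b); everything else is a direct computation. The point worth stressing is that unlike $\mathfrak{D}_2$ (treated in Lemma \ref{caipirinha}), the square $\mathfrak{D}_3$ is not symmetric under the $\Z_2$-rotation interchanging the two diagonal vertices, so the swap $e_1\leftrightarrow e_2$ is incompatible with the algebra structure and one immediately hits the nonvanishing of $\alpha'$. This is why in this case, contrary to the one handled in Lemma \ref{caipirinha}, no quotient by $G_3$ or by an auxiliary equivalence relation appears, and the parameter $\alpha$ itself is a complete invariant.
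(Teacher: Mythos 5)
Your proof is correct and follows essentially the same route as the paper: invoke \cite[Corollary 4.7]{EL} to reduce the isomorphism to a (possibly permuted) diagonal form and then compare coefficients. The only difference is that you spell out explicitly why the swapping case is impossible (via $r^2\alpha'=0$), a computation the paper leaves implicit under ``a computation gives that $\alpha=\alpha'$.''
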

\begin{proof}
Let $\{e'_1, e'_2\}$ be a natural basis of $A_{3,\alpha'}$ such that ${e'_1}^2= e'_1$ and ${e'}_2^2=\alpha e_1'+ e_2'$.

Assume first that $A_{3,\alpha}$ is isomorphic (as a $\K$-algebra) to $A_{3,\alpha'}$, and let $\varphi: A_{3,\alpha} \to A_{3,\alpha'}$ be an isomorphism. By \cite[Corollary 4.7]{EL}, and up to a permutation of the elements of the basis $\{e'_1, e'_2\}$, there exist $r, s\in \K^\times$ such that $\varphi(e_1)= re'_1$ and $\varphi(e_2)= se'_2$. Since $\varphi$ is a linear map and $\varphi(e_i^2)=\varphi(e_i)^2$, a computation gives  that $\alpha=\alpha'$.

The converse follows trivially.
\end{proof}

\begin{lemma}\label{pina_colada} Let $A_{4, \alpha}$ be the evolution algebra having a natural basis $\{e_1, e_2\}$ such that $e_1^2= \alpha e_1$ and $e_2^2= e_1 + e_2$, where $\alpha \in \K^\times$. Let $\alpha' \in \K^\times$. Then $A_{4,\alpha}$ is isomorphic (as a $\K$-algebra) to $A_{4,\alpha'}$ if and only if $\alpha = \alpha'$.
\end{lemma}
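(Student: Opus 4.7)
The plan is to mirror the structure of the proofs of Lemmas~\ref{caipirinha} and~\ref{saquerinha}. Let $\{e_1,e_2\}$ and $\{e'_1,e'_2\}$ be the natural bases of $A_{4,\alpha}$ and $A_{4,\alpha'}$ as given in the statement, and suppose $\varphi\colon A_{4,\alpha}\to A_{4,\alpha'}$ is an isomorphism of $\K$-algebras. By \cite[Corollary 4.7]{EL}, after possibly permuting the target basis, $\varphi(e_1)=re'_{\sigma(1)}$ and $\varphi(e_2)=se'_{\sigma(2)}$ for some $r,s\in\K^\times$ and some permutation $\sigma$ of $\{1,2\}$. I then split into the two cases $\sigma=\mathrm{id}$ and $\sigma=(1\,2)$, and in each case I impose the multiplicativity condition $\varphi(e_i^2)=\varphi(e_i)^2$ and compare coordinates.

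For $\sigma=(1\,2)$, applying $\varphi$ to $e_1^2=\alpha e_1$ yields $\alpha r e'_2$ on the left and $r^2(e'_1+e'_2)$ on the right; matching the coefficient of $e'_1$ forces $r^2=0$, contradicting $r\in\K^\times$. Hence this case cannot occur. The underlying reason is structural: the relation $e_1^2=\alpha e_1$ makes $\K e_1$ a one-dimensional ideal of $A_{4,\alpha}$, and similarly $\K e'_1$ is an ideal of $A_{4,\alpha'}$, whereas $\K e'_2$ is not an ideal; so no algebra isomorphism can send $e_1$ to a scalar multiple of $e'_2$.

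For $\sigma=\mathrm{id}$, applying $\varphi$ to $e_1^2=\alpha e_1$ gives $r^2\alpha' e'_1 = \alpha r e'_1$, whence $r\alpha'=\alpha$. Applying $\varphi$ to $e_2^2=e_1+e_2$ yields $s^2(e'_1+e'_2)=re'_1+se'_2$, and matching coefficients produces the system $s^2=r$ and $s^2=s$. Since $s\neq 0$, the second equation forces $s=1$, hence $r=1$, and therefore $\alpha=\alpha'$.

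The converse direction is immediate: when $\alpha=\alpha'$ the identity map is trivially a $\K$-algebra isomorphism between $A_{4,\alpha}$ and $A_{4,\alpha'}$. I do not expect this proof to present any real obstacle; the computation is short and essentially mechanical. The only mild subtlety is discarding the basis-swapping case $\sigma=(1\,2)$, which is handled either by the direct coordinate check above or, more conceptually, by the ideal-preservation remark.
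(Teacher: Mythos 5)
Your proof is correct and follows essentially the same route as the paper: both reduce, via \cite[Corollary 4.7]{EL}, to an isomorphism that is diagonal up to a permutation of the natural basis and then compare coefficients, the paper simply summarizing the coordinate check as ``a computation gives that $\alpha=\alpha'$''. Your explicit elimination of the basis-swapping case (and the ideal-theoretic remark explaining it) fills in details the paper leaves implicit, but introduces no new idea.
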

\begin{proof}
Let $\{e'_1, e'_2\}$ be a natural basis of $A_{4,\alpha'}$ such that ${e'}_1^2= \alpha' e'_1$ and ${e'}_2^2= e_1'+ e_2'$.
Assume first that $A_{4,\alpha}$ is isomorphic (as a $\K$-algebra) to $A_{4,\alpha'}$, and let $\varphi: A_{4,\alpha} \to A_{4,\alpha'}$ be an isomorphism. By \cite[Corollary 4.7]{EL}, and up to a permutation of the elements of the basis $\{e'_1, e'_2\}$, there exist $r, s\in \K^\times$ such that $\varphi(e_1)= re'_1$ and $\varphi(e_2)= se'_2$. Since $\varphi$ is a linear map and $\varphi(e_i^2)=\varphi(e_i)^2$, a computation gives  that $\alpha=\alpha'$.
The converse follows trivially.
\end{proof}

\begin{lemma}\label{guarana} Let $A_{5,\alpha, \beta}$ be the evolution algebra having a natural basis $\{e_1, e_2\}$ such that $e_1^2= e_1+\beta e_2$ and $e_2^2=\alpha e_1 + e_2$, where $\alpha, \beta \in \K^\times$. Let $\alpha', \beta' \in \K^\times$. Then $A_{5,\alpha, \beta}$ is isomorphic (as a $\K$-algebra) to $A_{5,\alpha', \beta'}$ if and only if $(\alpha, \beta) = (\alpha', \beta')$ or $(\alpha, \beta) = (\beta',\alpha')$.
\end{lemma}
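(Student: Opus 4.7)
The plan is to follow the same pattern as in Lemmas \ref{saquerinha} and \ref{pina_colada}, invoking \cite[Corollary 4.7]{EL}, which tells us that any isomorphism $\varphi\colon A_{5,\alpha,\beta}\to A_{5,\alpha',\beta'}$ between perfect two-dimensional evolution algebras must, up to a permutation of the natural basis $\{e_1',e_2'\}$, be of the form $\varphi(e_1)=re_1'$, $\varphi(e_2)=se_2'$ with $r,s\in\K^\times$. So the analysis splits into two cases, and in each case I just need to impose $\varphi(e_i^2)=\varphi(e_i)^2$ and read off the resulting constraints on $r,s,\alpha,\beta$.

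In Case 1 ($\varphi(e_1)=re_1'$, $\varphi(e_2)=se_2'$), applying $\varphi$ to $e_1^2=e_1+\beta e_2$ and comparing with $\varphi(e_1)^2=r^2(e_1'+\beta'e_2')$ yields $r=r^2$ and $s\beta=r^2\beta'$; similarly from $e_2^2=\alpha e_1+e_2$ we obtain $s=s^2$ and $r\alpha=s^2\alpha'$. Since $r,s\in\K^\times$, this forces $r=s=1$, and hence $(\alpha,\beta)=(\alpha',\beta')$. In Case 2 ($\varphi(e_1)=re_2'$, $\varphi(e_2)=se_1'$), the same computation now gives $r=r^2$, $s\beta=r^2\alpha'$, $s=s^2$, and $r\alpha=s^2\beta'$, so again $r=s=1$ and $(\alpha',\beta')=(\beta,\alpha)$.

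For the converse, the identity gives the isomorphism when $(\alpha,\beta)=(\alpha',\beta')$, while the linear map defined by $\varphi(e_1)=e_2'$, $\varphi(e_2)=e_1'$ supplies the isomorphism in the case $(\alpha,\beta)=(\beta',\alpha')$, as is directly verified.

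I do not expect any real obstacle in this proof: once \cite[Corollary 4.7]{EL} is invoked, the whole argument reduces to a short bookkeeping exercise with the four scalar equations. The only mild subtlety is to remember that the condition $\alpha\beta\ne 1$ is already built into the definition of $A_{5,\alpha,\beta}$ (it is what guarantees $A^2=A$, i.e., perfectness), so the hypothesis of \cite[Corollary 4.7]{EL} is automatically satisfied on both sides.
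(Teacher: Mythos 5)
Your proposal is correct and follows the same route as the paper: both invoke \cite[Corollary 4.7]{EL} to reduce any isomorphism to a diagonal or anti-diagonal form, then extract $(\alpha,\beta)=(\alpha',\beta')$ or $(\beta',\alpha')$ from $\varphi(e_i^2)=\varphi(e_i)^2$, and exhibit the identity and the swap map for the converse. The only difference is that you spell out the scalar equations (correctly, forcing $r=s=1$) where the paper simply says ``a computation gives,'' and your remark that $\alpha\beta\neq 1$ secures perfectness is accurate.
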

\begin{proof}
Take a natural basis $\{e'_1, e'_2\}$ of $A_{5,\alpha',\beta'}$ such that ${e'}_1^2= e'_1 + \beta' e_2'$ and ${e'}_2^2=\alpha' e_1'+ e_2'$.
Assume that $A_{5,\alpha,\beta}$ is isomorphic (as a $\K$-algebra) to $A_{5,\alpha',\beta'}$, and let $\varphi: A_{5,\alpha,\beta} \to A_{5,\alpha',\beta'}$ be an isomorphism. By \cite[Corollary 4.7]{EL}, and up to a permutation of the elements of the basis $\{e'_1, e'_2\}$, there exist $r, s\in \K^\times$ such that $\varphi(e_1)= re'_1$ and $\varphi(e_2)= se'_2$. Since $\varphi$ is a linear map and $\varphi(e_i^2)=\varphi(e_i)^2$, a computation gives  that $(\alpha, \beta) = (\alpha', \beta')$ or $(\alpha, \beta) = (\beta',\alpha')$.

On the other hand, if  $(\alpha, \beta) = (\alpha', \beta')$, then the linear map $\varphi: A_{5,\alpha,\beta} \to A_{5,\alpha',\beta'}$ such that $\varphi(e_1)= e'_1$ and $\varphi(e_2)= e'_2$ defines an isomorphism between both evolution algebras.  And if  $(\alpha, \beta) = (\beta',\alpha')$, then the linear map $\varphi: A_{5,\alpha,\beta} \to A_{5,\alpha',\beta'}$ such that $\varphi(e_1)= e'_2$ and $\varphi(e_2)= e'_1$ defines an isomorphism between both evolution algebras.
\end{proof}

We compile the results above in the following classification theorem.

\begin{theorem}\label{AsocDiag}
Let $A$ be a perfect two dimensional evolution algebra over an arbitrary field. The square of $A$ is one of the following:\medskip

\begin{center}
$\begin{matrix}

\xygraph{
!{<0cm,0cm>;<1cm,0cm>:<0cm,1cm>::}
!{(0,0)}*+{\bullet}="a"
!{(1,0)}*+{\bullet}="b"
!{(0,1)}*+{\bullet}="c"
!{(1,1)}*+{\bullet}="d"
"a"-"c" "b"-"d"} &
\xygraph{
!{<0cm,0cm>;<1cm,0cm>:<0cm,1cm>::}
!{(0,0)}*+{\bullet}="a"
!{(1,0)}*+{\bullet}="b"
!{(0,1)}*+{\bullet}="c"
!{(1,1)}*+{\bullet}="d"
"a"-"b" "c"-"d"} &
\xygraph{
!{<0cm,0cm>;<1cm,0cm>:<0cm,1cm>::}
!{(0,0)}*+{\bullet}="a"
!{(1,0)}*+{\bullet}="b"
!{(0,1)}*+{\bullet}="c"
!{(1,1)}*+{\bullet}="d"
 "b"-"d" "d"-"c" "a"-"c"} &
 \xygraph{
!{<0cm,0cm>;<1cm,0cm>:<0cm,1cm>::}
!{(0,0)}*+{\bullet}="a"
!{(1,0)}*+{\bullet}="b"
!{(0,1)}*+{\bullet}="c"
!{(1,1)}*+{\bullet}="d"
 "a"-"b" "a"-"c" "c"-"d"} &
 \xygraph{
!{<0cm,0cm>;<1cm,0cm>:<0cm,1cm>::}
!{(0,0)}*+{\bullet}="a"
!{(1,0)}*+{\bullet}="b"
!{(0,1)}*+{\bullet}="c"
!{(1,1)}*+{\bullet}="d"
 "a"-"b" "a"-"c" "c"-"d" "d"-"b"}\\
 \mathfrak{D}_1 & \mathfrak{D}_2 & \mathfrak{D}_3 & 
 \mathfrak{D}_4 & \mathfrak{D}_5
\end{matrix}$
\end{center}
And, up to isomorphism:
\begin{enumerate}[\rm(i)]
    \item There is only one evolution algebra, $A_1$, associated to $\mathfrak{D}_1$. 
    \item For each element $\widetilde{\alpha}\in G_3/\sim$, there is only one evolution algebra, $A_{2,\alpha}$ associated to $\mathfrak{D}_2$.
    \item For each element $\alpha \in \K^\times$, there is only one evolution algebra, $A_{3,\alpha}$ associated to $\mathfrak{D}_3$. 
    \item For each element $\alpha \in \K^\times$, there is only one evolution algebra, $A_{4,\alpha}$ associated to $\mathfrak{D}_4$.  
    \item For each pair $(\alpha, \beta)\in \K^\times\times \K^\times$, such that $\alpha\beta\neq 1$, there is only one evolution algebra (up to permutation of the components of the pair), $A_{5,\alpha,\beta}$ associated to $\mathfrak{D}_5$.  
\end{enumerate}
\end{theorem}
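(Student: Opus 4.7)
The plan is to combine the preceding lemmas with a short exhaustiveness check. The first task is to rule out the classes of Table \ref{DTDEA} that cannot arise from a perfect algebra. Since $A^2=A$, the structure matrix is invertible in every natural basis, so $\omega_{11}\omega_{22}-\omega_{12}\omega_{21}\ne 0$. Inspecting each class: $\mathfrak{D}_0$ has $M_B=0$; $\mathfrak{D}_6$ and $\mathfrak{D}_7$ give a matrix with a single nonzero entry; and the representatives of $\mathfrak{D}_8$ and $\mathfrak{D}_9$ place their two nonzero entries in the same row or the same column, so again the determinant vanishes. Hence none of these five classes can occur in the perfect case, and only $\mathfrak{D}_1,\ldots,\mathfrak{D}_5$ remain.

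Next I would justify that $\mathfrak{S}(A)$ is genuinely one of the five elements of $\mathcal{D}_2$. By \cite[Theorem 4.4]{EL}, for a perfect two-dimensional evolution algebra any two natural bases are related by a permutation composed with a diagonal rescaling, and both operations preserve the zero pattern of $M_B$ modulo the $\mathbb{Z}_2$-rotation already identified in Lemma \ref{isomrotationpi}. Consequently $\mathfrak{S}(A)$ is a single two-element $\mathbb{Z}_2$-orbit, i.e., an element of $\mathcal{D}_2$. Lemma \ref{nerja} then furnishes, for each of the five surviving classes, a canonical natural basis in which $M_B$ has the explicit form written there, depending on at most two parameters.

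To finish, I would enumerate isomorphism classes inside each square. For $\mathfrak{D}_1$ the canonical form is $I_2$, which admits no further parameters, so all such algebras are isomorphic to a single $A_1$. For $\mathfrak{D}_2,\mathfrak{D}_3,\mathfrak{D}_4,\mathfrak{D}_5$, the parametric families $A_{2,\alpha}$, $A_{3,\alpha}$, $A_{4,\alpha}$, $A_{5,\alpha,\beta}$ are classified respectively by Lemmas \ref{caipirinha}, \ref{saquerinha}, \ref{pina_colada}, and \ref{guarana}, yielding the parameter sets $G_3/\sim$, $\K^\times$, $\K^\times$, and the unordered-pair quotient of $\{(\alpha,\beta)\in(\K^\times)^2:\alpha\beta\ne 1\}$ by the coordinate swap.

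The subtle step is the case $\mathfrak{D}_2$: \cite[Corollary 4.7]{EL} permits both $\varphi(e_1)=re_1',\varphi(e_2)=se_2'$ and $\varphi(e_1)=re_2',\varphi(e_2)=se_1'$, and the two orderings contribute respectively the classes of $\alpha$ and of $\alpha^2$ in $G_3$, which is exactly the reason the equivalence $\sim$ must be introduced. Once this is absorbed into Lemma \ref{caipirinha}, the five-case enumeration assembles into the stated classification.
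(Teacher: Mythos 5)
Your proposal is correct and follows essentially the same route as the paper: the classification is assembled from Lemma \ref{nerja} together with Lemmas \ref{caipirinha}, \ref{saquerinha}, \ref{pina_colada} and \ref{guarana}, after observing (via \cite[Theorem 4.4]{EL}) that the square of a perfect algebra is a single $\Z_2$-orbit. Your explicit determinant check ruling out $\mathfrak{D}_0$ and $\mathfrak{D}_6$--$\mathfrak{D}_9$ is a correct elaboration of a step the paper leaves implicit (perfectness of a two-dimensional evolution algebra is equivalent to invertibility of the structure matrix), and your remark on why the relation $\sim$ on $G_3$ is needed matches the two basis-orderings handled in Lemma \ref{caipirinha}.
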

\begin{proof}
Item (i) follows from Lemma~\ref{nerja}; (ii) by Lemmas~\ref{nerja} and \ref{caipirinha} (iii) by Lemmas~\ref{nerja} and \ref{saquerinha}; (iv) by Lemmas~\ref{nerja} and \ref{pina_colada} and (v) by Lemmas~\ref{nerja} and \ref{guarana}.
\end{proof}

\begin{remark}
\rm
In Theorem \ref{AsocDiag} the algebra $A_1$ is associative and the only one (perfect) which is decomposable, i.e. $A=I_1\oplus I_2$, where $I_i$ is the ideal generated by $e_i$ and $I_i^2\neq0$, for $\{e_1, e_2\}$ a natural basis having structure matrix the one given in Lemma \ref{nerja}. More concretely, it is isomorphic to $\K \times \K$. 
\end{remark}

Finally we describe the ideal structure of the perfect two dimensional evolution algebras.

\begin{proposition}\label{IdealesPerfecto}
Let $A$ be a perfect two dimensional evolution algebra. Then $A$ has a one  dimensional (evolution) ideal only in case $A_{3,\a}$ and has two one  dimensional ideals only in case $A_1$. 

In terms of squares, $A$ has a one  dimensional ideal if a pseudo-square in $\mathfrak{S}(A)$ has the left line but not the bottom one, or has the right line but not the top one. Each of this cases determines a one  dimensional ideal, generated by the corresponding vector in the natural basis relative to which the pseudo-square is given.
\end{proposition}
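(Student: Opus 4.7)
The plan is to first understand all one-dimensional ideals of an arbitrary perfect two-dimensional evolution algebra $A$ directly in terms of a fixed natural basis, and then match the result against the five normal forms provided by Theorem~\ref{AsocDiag} via Lemma~\ref{nerja}.

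Fix a natural basis $\{e_1,e_2\}$ of $A$, write $e_1^2=\alpha e_1+\beta e_2$ and $e_2^2=\gamma e_1+\mu e_2$, and recall that $A^2=A$ is equivalent to $\alpha\mu-\beta\gamma\neq 0$. For a nonzero $u=ae_1+be_2$, the subspace $\K u$ is an ideal of $A$ iff $e_1u,e_2u\in\K u$, and a direct computation gives
$$e_1u=a\alpha e_1+a\beta e_2,\qquad e_2u=b\gamma e_1+b\mu e_2.$$
I would next observe that if $a$ and $b$ are both nonzero, then $e_1u=\lambda_1 u$ and $e_2u=\lambda_2 u$ force $\lambda_1=\alpha$ with $a\beta=\alpha b$, and $\lambda_2=\mu$ with $b\gamma=\mu a$. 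Multiplying these two equalities yields $\beta\gamma=\alpha\mu$, contradicting perfectness. Hence every one-dimensional ideal of $A$ is either $\K e_1$ or $\K e_2$, and from the explicit formulas for $e_iu$ above we read that $\K e_1$ is an ideal iff $\beta=0$, while $\K e_2$ is an ideal iff $\gamma=0$.

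To translate this into the language of squares, recall from the diagram \eqref{S} that $\alpha=\omega_{11}$ labels the left edge, $\mu=\omega_{22}$ the right edge, $\beta=\omega_{21}$ the bottom edge and $\gamma=\omega_{12}$ the top edge. Perfectness combined with $\beta=0$ forces $\alpha\neq 0$ (otherwise $e_1^2=0$, which is impossible since in the perfect case $\{e_1^2,e_2^2\}$ spans $A$), so the condition \lq\lq $\K e_1$ is a one-dimensional ideal generated by $e_1$\rq\rq\ is equivalent to the geometric condition \lq\lq the left edge is present and the bottom one is absent\rq\rq; the case of $\K e_2$ is obtained symmetrically by swapping top/bottom and left/right, and corresponds to the rotation by $\pi$ in $\mathfrak{S}(A)$.

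To conclude, I would simply unwind this criterion against the normal forms of Lemma~\ref{nerja}: in $A_1$ one has $\beta=\gamma=0$, yielding the two ideals $\K e_1$ and $\K e_2$; in $A_{3,\alpha}$ one has $\beta=0$ but $\gamma=\alpha\neq 0$, giving the unique one-dimensional ideal $\K e_1$; and in each of $A_{2,\alpha}$, $A_{4,\alpha}$, $A_{5,\alpha,\beta}$ both $\beta$ and $\gamma$ are nonzero, so no one-dimensional ideal exists. The only mildly delicate step is ruling out \lq\lq diagonal\rq\rq\ one-dimensional ideals spanned by $ae_1+be_2$ with $ab\neq 0$, and this is dispatched cleanly by the determinant obstruction $\alpha\mu\neq\beta\gamma$ coming from $A^2=A$.
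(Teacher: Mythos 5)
Your proof is correct and complete. The paper in fact states Proposition~\ref{IdealesPerfecto} without supplying any proof, so there is no argument of the authors to compare against; your write-up fills that gap. The two essential points are both handled properly: first, the exclusion of ``diagonal'' ideals $\K(ae_1+be_2)$ with $ab\neq 0$ via the identity $\beta\gamma=\alpha\mu$, which contradicts the perfectness criterion $\alpha\mu-\beta\gamma\neq 0$; second, the observation that $\K e_1$ (resp.\ $\K e_2$) is an ideal precisely when $\omega_{21}=0$ (resp.\ $\omega_{12}=0$), together with the remark that perfectness then forces $\omega_{11}\neq 0$ (resp.\ $\omega_{22}\neq 0$), which is exactly what makes the ``left edge present, bottom edge absent'' phrasing of the statement accurate rather than just ``bottom edge absent.'' Your reading of the edge labels agrees with diagram \eqref{S}, and the case check against the normal forms of Lemma~\ref{nerja} is right ($\omega_{12}=\omega_{21}=0$ for $A_1$; $\omega_{21}=0\neq\omega_{12}$ for $A_{3,\alpha}$; both nonzero for $A_{2,\alpha}$, $A_{4,\alpha}$, $A_{5,\alpha,\beta}$). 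One cosmetic caution: you reuse $\beta$ both for the structure constant of $e_1^2$ and for the parameter of $A_{5,\alpha,\beta}$; worth renaming one of them in a final version. You might also add one sentence invoking \cite[Theorem 4.4]{EL} to justify that the edge conditions are independent of the chosen natural basis (all pseudo-squares of a perfect algebra coincide up to the $\pi$-rotation), which you currently only gesture at.
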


\subsection{The non-perfect case}
The case $A^2=0$, which corresponds to square $\mathfrak{D}_0$, is trivial. Now, 
 we will investigate the squares associated to a two dimensional evolution $\K$-algebra $A$ such that $0\ne A^2\subsetneq A$, and the classification of such algebras.

When $A^2\ne A$ the situation is more cumbersome than in the perfect case as we will see: There are squares that consist of union of two equivalence classes in $D_2$.  In this case, to represent the square we always choose the pseudo-square with less edges. 

Notice that, if $A$ is non perfect and $T$ is a pseudo-square in the square of $A$ then $T$ is not in $\mathfrak{D_1}$ to $\mathfrak{D_4}$. For each of the five elements of $\mathcal D_2$ that appear in the classification of the non-perfect evolution algebras we choose one representative. These are the following:

\begin{center}
$\begin{matrix}

\xygraph{
!{<0cm,0cm>;<1cm,0cm>:<0cm,1cm>::}
!{(0,0)}*+{\bullet}="a"
!{(1,0)}*+{\bullet}="b"
!{(0,1)}*+{\bullet}="c"
!{(1,1)}*+{\bullet}="d"
"a"-"b" "a"-"c" "c"-"d" "d"-"b"} &
\xygraph{
!{<0cm,0cm>;<1cm,0cm>:<0cm,1cm>::}
!{(0,0)}*+{\bullet}="a"
!{(1,0)}*+{\bullet}="b"
!{(0,1)}*+{\bullet}="c"
!{(1,1)}*+{\bullet}="d"
"c"-"d"} &
\xygraph{
!{<0cm,0cm>;<1cm,0cm>:<0cm,1cm>::}
!{(0,0)}*+{\bullet}="a"
!{(1,0)}*+{\bullet}="b"
!{(0,1)}*+{\bullet}="c"
!{(1,1)}*+{\bullet}="d"
"a"-"c"} &
 \xygraph{
!{<0cm,0cm>;<1cm,0cm>:<0cm,1cm>::}
!{(0,0)}*+{\bullet}="a"
!{(1,0)}*+{\bullet}="b"
!{(0,1)}*+{\bullet}="c"
!{(1,1)}*+{\bullet}="d"
 "a"-"c" "c"-"d"} &
 \xygraph{
!{<0cm,0cm>;<1cm,0cm>:<0cm,1cm>::}
!{(0,0)}*+{\bullet}="a"
!{(1,0)}*+{\bullet}="b"
!{(0,1)}*+{\bullet}="c"
!{(1,1)}*+{\bullet}="d"
 "b"-"d" "c"-"d"}
\\
 \mathfrak{D}_5 & \mathfrak{D}_6 & \mathfrak{D}_7 & 
 \mathfrak{D}_8 &
 \mathfrak{D}_9
\end{matrix}$
\end{center}

By abuse of notation, in what follows, when we speak about a square, we will refer to the chosen representative in the class.

The classification of the algebras we deal with relies on the powers of the algebra. If $A$ is an evolution algebra, 
$A^2$ is the $\K$-linear span of the set $\{xy\ \vert \ x,y\in A\}$, while $A^3$ is the linear span of $\{xy\ \vert\ x\in A, y\in A^2\}$. Note that $(A^2)^2$, which is the $\K$-linear span of $\{xy\ \vert\ x,y\in A^2\}$, is not necessarily the same as the linear span of $\{x y\ \vert\ x\in A,y\in A^3\}$.

Assume $A^2=\K u$ with $u\ne 0$.  
Then we analyze three possibilities:
\smallskip

{\bf Case 1}: $A^3\neq 0$ and $(A^2)^2=0$. 
\noindent

\noindent
Note that this is equivalent to $u^2=0$ and $Au\neq 0$. 
 Since $Au\neq 0$, after scaling if necessary, there is a nonzero $v\in A$ such that $uv=u$. Then $\{u,v\}$ is a basis of $A$ and
$v^2=k u$. Now, changing $ku$ to $u'$ and taking $v'=v$ we get that $u'^2 = 0$, $u'v'=u'$ and $v'^2=u'$. Take $e=v'$ and $f=-u'+v'$. Then
$e^2= e-f$, $f^2=-e+f$ and $ef=0$, hence the algebra has a pseudo-square in $\mathfrak{D}_5$.

\medskip

It is not difficult to see that for any other natural basis $\{g, h\}$, the associated pseudo-square is the same. Collecting this information we have the following result.

\begin{proposition}\label{cuboNonulo1} Let $A$ be a two dimensional evolution algebra with $\dim(A^2)=1$, $A^3\ne 0$ and $(A^2)^2=0$. Then 
there is a natural basis $\{e,f\}$ with $e^2=e-f$ and $f^2=-e+f$. The square of the algebra is $\mathfrak{D}_5$.
This algebra, that will be called $A_5$, is not alternative (Recall that an algebra $A$ is \emph{alternative} if $x^2y=x(xy)$ and $yx^2 = (yx)x$, for all $x, y\in A$). 
\end{proposition}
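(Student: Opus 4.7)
The plan is to extract concrete multiplication data from the three hypotheses, reshape it into the announced natural basis, and then read off the square and the failure of alternativity by direct computation.

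First I would unpack the hypotheses: $\dim A^2 = 1$ gives $A^2 = \K u$ for some $u \neq 0$; the condition $(A^2)^2 = 0$ translates to $u^2 = 0$; and $A^3 \neq 0$ amounts to $A u \neq 0$. Pick $v \in A$ with $uv \neq 0$ and, after rescaling $u$, arrange $uv = u$. The pair $\{u, v\}$ is linearly independent (otherwise $v \in \K u$ would give $uv \in \K u^2 = 0$), hence a basis, and since $v^2 \in A^2 = \K u$ we can write $v^2 = k u$ for some scalar $k$.

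The central step is to normalize so that $k \neq 0$ and then produce the natural basis. If $k \neq 0$, setting $u' := k u$ and $v' := v$ yields $u'^2 = 0$, $u' v' = u'$, $v'^2 = u'$. The case $k = 0$ has to be excluded: in characteristic different from $2$ the substitution $v \mapsto v + c u$ preserves $uv = u$ and shifts $v^2$ by $2 c u$, so one can always force $k \neq 0$; in characteristic $2$ one observes that an algebra with multiplication $u^2 = 0$, $uv = u$, $v^2 = 0$ admits no natural basis at all (the defining equation $gh = 0$ translates to $ad + bc = 0$, which in characteristic $2$ coincides with $ad - bc = 0$, forcing singularity), so since $A$ is an evolution algebra this degeneracy cannot occur. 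Once $u'^2 = 0$, $u' v' = u'$, $v'^2 = u'$ are in place, set $e := v'$ and $f := v' - u'$; then $u' = e - f$ and a short calculation gives $e^2 = u' = e - f$, $e f = v'^2 - u' v' = 0$, and $f^2 = v'^2 - 2 u' v' + u'^2 = -u' = -e + f$ (the characteristic-$2$ verification reduces to $-u' = u'$). Hence $\{e, f\}$ is a natural basis with the announced multiplication.

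For the square, the structure matrix in the basis $\{e, f\}$ has all four entries equal to $\pm 1$, so the associated pseudo-square belongs to the class $\mathfrak{D}_5$ (the one with all four edges present). To upgrade this to $\mathfrak{S}(A) = \mathfrak{D}_5$ I would check that no natural basis can yield a pseudo-square with a missing edge: a vanishing $\omega_{ij}$ would mean some basis vector $g$ satisfies $g^2 \in \K g$ or $g^2 = 0$, and combined with $\dim A^2 = 1$, $A^3 \neq 0$, $(A^2)^2 = 0$ this leads to a contradiction by the same sort of case analysis used in Proposition~\ref{Oporto}. Finally, non-alternativity is a one-line check: $e^2 f = (e - f) f = e f - f^2 = -f^2 = e - f$, while $e(e f) = 0$, so the left alternative identity fails. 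The main obstacle is the normalization in low characteristic: one has to invoke the very existence of a natural basis to rule out the configuration $v^2 = 0$ in characteristic $2$; after that the remaining verifications are routine.
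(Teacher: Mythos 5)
Your proof is correct and follows essentially the same route as the paper: both reduce the hypotheses to $u^2=0$, $uv=u$, $v^2=ku$ and then pass to $e=v'$, $f=v'-u'$ to obtain the stated natural basis, the full pseudo-square $\mathfrak{D}_5$, and the failure of $x^2y=x(xy)$ at $x=e$, $y=f$. The one genuine difference is that you explicitly treat the degenerate case $k=0$, which the paper's phrase ``changing $ku$ to $u'$'' silently assumes away; your handling of it (shifting $v\mapsto v+cu$ when $\mathrm{char}(\K)\neq 2$, and ruling it out when $\mathrm{char}(\K)=2$ because $gh=(ad+bc)u=(ad-bc)u$ would preclude any natural basis) is valid and in fact patches a small gap in the paper's argument.
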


\begin{remark}
\rm
Note that by Proposition \ref{cuboNonulo1}, up to isomorphism, there is only one two dimensional non-perfect evolution algebra satisfying $A^3\neq 0$ and $(A^2)^2=0$. This does not mean that the algebra $A$ is nilpotent because $(...((A^2) A)A...)A= \K (e-f)$.
\end{remark}

\smallskip

{\bf Case 2:} $A^3=0$. 
\noindent

\noindent
Note that this is equivalent to say $Au=0$. For any $v\in A$ which is linearly independent to $u$,  it happens that $\{u,v\}$ is a natural basis of $A$ such that $u^2=0$, $v^2= \alpha u\ne 0$.
This implies $\K u=\hbox{Ann}(A)$ is a one  dimensional ideal of $A$ and there is no other proper nontrivial ideal. Indeed, assume $0\ne I:=\K(\beta u+ \gamma v)\triangleleft A$, with $\beta, \gamma\in\K$. Then $I\ni (\beta u+ \gamma v)v=\gamma \alpha u$. If $\gamma=0$ then $I=\K u=\hbox{Ann}(A)$; otherwise
$u\in I$ and hence $\hbox{Ann}(A)=\K u\subseteq I$ so that $I=\hbox{Ann}(A)$. Consequently, $A$ is not decomposable, satisfies $A^3=0$
and $\dim(A^2)=1$. It has a pseudo-square, relative to the basis $\{u,v\}$, in $\mathfrak{D}_6$ and it is not difficult to see that any other natural basis provides the same pseudo-square (up to rotation by $\pi$).

Defining  $u':=\alpha u$ and considering the new basis $\{u',v\}$, we have $u'^2=u'v=0$ and $v^2=u'$. This means that, up to isomorphism, there is only one evolution algebra in this case.
Moreover, it is associative. We collect all the information in the result that follows.
\smallskip
\begin{proposition}\label{Dseis} There is only one isomorphism class of two dimensional evolution algebras $A$ such that  $\dim(A^2)=1$ and $A^3=0$. Its square
is $\mathfrak{D}_6$. This algebra, that we call $A_6$, is associative.
 \end{proposition}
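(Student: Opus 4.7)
The plan is to extract a canonical natural basis from the hypotheses and then read off both the square and associativity directly from its multiplication table.

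First, I would translate the hypotheses into elementary relations. Writing $A^2=\K u$ for some nonzero $u$, the condition $A^3=A\cdot A^2=0$ becomes simply $Au=0$. In particular $u^2=0$ and $uv=0$ for every $v\in A$, so any $v$ linearly independent from $u$ already makes $\{u,v\}$ a natural basis. Since $v^2\in A^2=\K u$ and $\dim A^2=1$, we must have $v^2=\alpha u$ for some $\alpha\in\K^\times$. Rescaling $u':=\alpha u$ produces the canonical form $u'^2=u'v=0$ and $v^2=u'$; two algebras sharing this canonical form are identified by the obvious basis-preserving linear map, so there is a single isomorphism class, which we christen $A_6$.

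For the claim on $\mathfrak{S}(A)$, I would pick an arbitrary natural basis $\{f_1,f_2\}$ and expand $f_i=a_iu+b_iv$. Using $u^2=uv=0$ and $v^2=\alpha u$, one computes $f_i^2=\alpha b_i^2 u$ and $f_1f_2=\alpha b_1b_2 u$; naturality forces $b_1b_2=0$, while linear independence prevents both $b_i$ from vanishing. Hence exactly one of $b_1,b_2$ is zero, and the corresponding structure matrix has its unique nonzero entry lying on either the top or the bottom edge of diagram \eqref{S}. These are precisely the two pseudo-squares comprising the class $\mathfrak{D}_6$, so $\mathfrak{S}(A)=\mathfrak{D}_6$.

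Finally, associativity follows by inspection on the canonical basis $\{u',v\}$: since $u'$ annihilates both $u'$ and $v$, any associator involving at least one factor of $u'$ vanishes on both sides, and the remaining case $(vv)v=u'v=0=vu'=v(vv)$ is immediate. There is no serious obstacle in this argument; the whole proof is driven by the single observation that $A^3=0$ turns $u$ into an absorbing element, which simultaneously rigidifies the multiplication in every natural basis and trivialises the associativity check.
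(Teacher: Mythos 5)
Your argument is correct and follows essentially the same route as the paper: reduce to the canonical natural basis $\{u',v\}$ with $u'^2=u'v=0$, $v^2=u'$, deduce uniqueness of the isomorphism class, and read off the square and associativity. You in fact supply details the paper leaves implicit (the explicit computation showing every natural basis yields a pseudo-square in $\mathfrak{D}_6$, and the associator check), so nothing is missing.
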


\smallskip

{\bf Case 3:} $A^3\ne 0$ and $(A^2)^2\ne 0$. 

\noindent

\noindent
Note that this is equivalent to $uA\ne 0$ and $u^2\ne 0$.  
Consider the left multiplication operator $L_u\colon A\to A$. Since $A^2=\K u$, there is some linear map $f\colon A\to\K$ such that 
$L_u(x)=f(x)u$ for any $x\in A$. Since $f\ne 0$, we have $\dim(\hbox{Im}(f))=1$. Hence $\dim(\ker(f))=1$ and so there is some nonzero $v$ such
that $uv=0$. Summarizing, $\{u,v\}$ is linearly independent and  we have a natural basis $\{u,v\}$ with 
$u^2=\alpha u$ and $v^2=\beta u$, where $\alpha,\beta\in\K$ with $\alpha\ne 0$. Scaling $u$ if necessary, we may consider without loss of generality
that the basis satisfies $u^2=u$, $v^2=\beta u$.
There are now two possibilities that we will call  Case 3.1 and Case 3.2.

{\bf Case 3.1:}  $\beta=0$.  

\noindent

\noindent
In this case the natural basis is $\{u,v\}$ with $u^2=u$, $v^2=0$. $A$ is decomposable and isomorphic to  $\K\times \K$ with 
multiplication $(x,y)(x',y')=(xx',0)$. We call this algebra $A_7$. It has a pseudo-square in $\mathfrak{D}_7$. Now we see that any other pseudo-square for $A$ has to be in $\mathfrak{D}_7 \cup \mathfrak{D}_9$. Indeed, let $\{e, f\}$ be a natural basis. Since ${\rm Ann}(A)$ is one dimensional (because $v^2=0$), by \cite[Proposition 2.18]{CSV1}, then $e^2=0$ or $f^2=0$. By symmetry, we may assume $e^2=0$. This implies that the left vertical side and the bottom horizontal side in a pseudo-square cannot appear. On the other hand, a pseudo-square in $\mathfrak{D}_6$ cannot appear because, in this case, $f^2f^2=0$, but we are not considering this situation (as $(A^2)^2\neq 0$). This shows the claim.
\medskip

\begin{lemma}\label{orujonorrepe}
The evolution algebras associated to the pseudo-squares that follow
$${\xymatrix{
\bullet \ar@{-}[d]&   \bullet \\
\bullet         &         \bullet
}} \quad 
{\xymatrix{
\bullet \ar@{-}[r]&   \bullet \ar@{-}[d]\\
\bullet         &         \bullet
}} $$ 
are isomorphic. 
\end{lemma}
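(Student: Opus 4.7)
The plan is as follows. First I would translate both pseudo-squares into explicit multiplication tables. Recalling that in the diagram \eqref{S} the four edges correspond to the coefficients $\omega_{ij}$ appearing in $e_j^2=\sum_i\omega_{ij}e_i$, the left-hand pseudo-square (only the left edge present) says that some natural basis $\{e_1,e_2\}$ of the first algebra $A$ satisfies $e_1^2=\a e_1$, $e_2^2=0$, $e_1e_2=0$ for some $\a\in\K^\times$, while the right-hand pseudo-square (top and right edges present) gives a natural basis $\{f_1,f_2\}$ of the second algebra $A'$ with $f_1^2=0$, $f_2^2=\b f_1+\gamma f_2$, $f_1f_2=0$ for some $\b,\gamma\in\K^\times$.

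My strategy is to reduce both algebras to the same canonical form and then match them. In $A$, the rescaling $u:=\a^{-1}e_1$, $v:=e_2$ produces a natural basis with $u^2=u$ and $v^2=0$. For $A'$, the key observation is that $(\b f_1+\gamma f_2)^2=\gamma^2(\b f_1+\gamma f_2)$, since $f_1^2=0=f_1f_2$; hence $w:=\gamma^{-2}f_2^2$ is an idempotent. Moreover $wf_1=0$ because every product involving $f_1$ vanishes, so $\{w,f_1\}$ is linearly independent (as $\gamma\neq 0$) and is in fact a natural basis of $A'$ satisfying $w^2=w$ and $f_1^2=0$. The linear bijection $\varphi:A\to A'$ defined by $\varphi(u):=w$ and $\varphi(v):=f_1$ then transports one multiplication table onto the other entrywise, so it is an algebra isomorphism.

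There is no real obstacle; the only subtlety is to notice that, in the second pseudo-square, the \emph{vector} $f_2^2$ (a nonzero element of $A'^2$ distinct from $f_2$) is what plays the role of the idempotent of the first algebra, whereas $f_1$ plays the role of the square-zero vector. This interchange of roles is precisely why the two equivalence classes $\mathfrak{D}_7$ and $\mathfrak{D}_9$ can support the same algebra $A_7$, a phenomenon that the square $\mathfrak{S}_7$ is designed to record.
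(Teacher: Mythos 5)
Your proof is correct and constructs exactly the same isomorphism as the paper: your $\varphi(e_1)=\alpha w=\alpha\beta\gamma^{-2}f_1+\alpha\gamma^{-1}f_2$, $\varphi(e_2)=f_1$ is literally the map the paper writes down and verifies directly. The only difference is presentational — you derive the map conceptually by locating the idempotent $\gamma^{-2}f_2^2$ in the second algebra rather than exhibiting the matrix and checking it, which is a nice way to see \emph{why} it works but is the same argument.
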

\begin{proof}
Let  $A_1$ be an evolution algebra whose pseudo-square is the first one in the statement, and  $\{e_1, e_2\}$ be a natural basis of $A_1$, i.e., $e_1^2 = \alpha e_1$, for $\alpha \neq 0$ and $e_2^2=0$. Let $A_2$ an evolution algebra whose pseudo-square is the second one in the statement, and   $\{f_1,f_2\}$  be a natural basis of $A_2$, i.e., $f_1^2=0$ and $f_2^2=\gamma f_1 + \delta f_2$, where $\gamma\delta\neq0$. Let $\varphi: A_1 \rightarrow A_2$, be such that $\varphi(e_1)= \alpha\gamma{(\delta^2)}^{-1}f_1+\alpha\delta^{-1}f_2$ and $\varphi(e_2)=f_1$. It is easy to verify that $\varphi$ is an algebra isomorphism between $A_1$ and $A_2$.
\end{proof}

We collect the information above.

\begin{corolary}
There is (up to isomorphism) only one two dimensional evolution algebra $A$ such that $A^3\neq0$, $(A^2)^2\neq 0$ having nontrivial annihilator. Its square is $\mathfrak{D}_7 \cup \mathfrak{D}_9$.
\end{corolary}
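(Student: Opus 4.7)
The plan is to deduce the uniqueness and the description of the square from the material already assembled in Case 3.1 together with Lemma \ref{orujonorrepe}.

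First, I would observe that the three running hypotheses---$A^3\ne 0$, $(A^2)^2\ne 0$, and $\hbox{Ann}(A)\ne 0$---place us inside Case 3. By that analysis, $A$ admits a natural basis $\{u,v\}$ with $u^2=u$, $uv=0$ and $v^2=\beta u$ for some $\beta\in\K$. A direct computation on a general element $au+bv$ gives $(au+bv)u=au$ and $(au+bv)v=b\beta u$, so $\hbox{Ann}(A)\ne 0$ forces $\beta=0$, which places us in Case 3.1. The work already carried out there identifies $A$, up to isomorphism, with $A_7\cong \K\times\K$ under the product $(x,y)(x',y')=(xx',0)$, yielding the claimed uniqueness.

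For the square, the natural basis $\{u,v\}$ just obtained has $\omega_{11}=1$ and all other entries of the structure matrix equal to zero, so its pseudo-square has only the left vertical edge and hence lies in $\mathfrak{D}_7$. The discussion preceding Lemma \ref{orujonorrepe} has already ruled out any pseudo-square of $A_7$ outside $\mathfrak{D}_7\cup\mathfrak{D}_9$: the nontrivial annihilator forces $e^2=0$ for one of the vectors in any natural basis $\{e,f\}$ (killing both the left vertical and bottom horizontal edges), while $(A^2)^2\ne 0$ forbids a pseudo-square in $\mathfrak{D}_6$. To see that $\mathfrak{D}_9$ is actually realized, I would invoke Lemma \ref{orujonorrepe}: it provides an explicit isomorphism from an algebra with pseudo-square of type $\mathfrak{D}_7$ to one with pseudo-square of type $\mathfrak{D}_9$, and pulling a natural basis back across this isomorphism produces a natural basis of $A_7$ whose pseudo-square lies in $\mathfrak{D}_9$. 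Combining these observations gives $\mathfrak{S}(A_7)=\mathfrak{D}_7\cup\mathfrak{D}_9$.

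I do not expect a genuine obstacle; the bulk of the argument is already contained in the Case 3.1 discussion and in Lemma \ref{orujonorrepe}. The only step that needs a small amount of care is the inclusion $\mathfrak{D}_9\subseteq \mathfrak{S}(A_7)$: one must exhibit a concrete natural basis of $A_7$ yielding a pseudo-square in $\mathfrak{D}_9$, rather than merely invoke the existence of such a basis in an isomorphic copy. But transporting the basis given in Lemma \ref{orujonorrepe} along the isomorphism constructed there does exactly this, so the verification is entirely routine.
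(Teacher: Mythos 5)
Your proposal is correct and follows essentially the same route as the paper, which proves this corollary simply by collecting the Case 3.1 analysis (where the nontrivial annihilator is exactly what forces $\beta=0$, identifying $A$ with $A_7$ and bounding its pseudo-squares inside $\mathfrak{D}_7\cup\mathfrak{D}_9$) together with Lemma \ref{orujonorrepe}, which realizes the $\mathfrak{D}_9$ pseudo-square. Your explicit computation showing $\operatorname{Ann}(A)\neq 0\Leftrightarrow\beta=0$ and your remark about transporting the natural basis along the isomorphism of Lemma \ref{orujonorrepe} just make explicit steps the paper leaves implicit.
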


{\bf Case 3.2:} We assume $\beta\ne 0$. 
\noindent

\noindent
The natural basis is $\{u,v\}$ with $u^2=u$, $v^2=\beta u$. We denote by $A_{8, \a}$ the algebra with natural basis 
$\{u,v\}$ and $u^2=u$, $v^2=\a u$.

It has a pseudo-square  in $\mathfrak{D}_8$. To find its square, we need the following lemma. 
\medskip

\begin{lemma}\label{queimada}
Let $A$ be a evolution algebra with natural basis $\{u,v\}$ such that $u^2=\alpha u$ and $v^2=\beta u$, with $\alpha, \beta\in \K^\times$ (notice that A is non-perfect). Then, for any other natural basis of $A$ its pseudo-square is one of the following
$${\xymatrix{
\bullet\ &   \bullet \ar@{-}[d]\\
\bullet  \ar@{-}[r]       &         \bullet
}} \quad \quad
{\xymatrix{
\bullet\ar@{-}[r] \ar@{-}[d]&   \bullet\ar@{-}[d] \\
\bullet  \ar@{-}[r]       &         \bullet
}}$$

\end{lemma}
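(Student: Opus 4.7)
The plan is to parametrize an arbitrary natural basis $\{e,f\}$ of $A$ in terms of $\{u,v\}$ and read off the resulting structure matrix directly. I would write $e=a_1 u+a_2 v$ and $f=b_1 u+b_2 v$ with $D:=a_1 b_2-a_2 b_1\ne 0$. Using $u^2=\alpha u$, $v^2=\beta u$ and $uv=0$, each of the products $ef$, $e^2$, $f^2$ becomes a scalar multiple of $u$: explicitly $ef=(a_1b_1\alpha+a_2b_2\beta)u$, $e^2=\gamma u$ and $f^2=\delta u$, where $\gamma:=a_1^2\alpha+a_2^2\beta$ and $\delta:=b_1^2\alpha+b_2^2\beta$. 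The naturality condition $ef=0$ is then precisely the orthogonality relation
\[
a_1b_1\alpha+a_2b_2\beta=0. \qquad (\ast)
\]

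Next I would argue that $\gamma$ and $\delta$ must both be nonzero. A direct calculation shows $\hbox{Ann}(A)=0$, since for $x=cu+dv$ the products $xu=c\alpha u$ and $xv=d\beta u$ vanish only when $c=d=0$ (using $\alpha,\beta\in\K^\times$). As $A$ is commutative and $\{e,f\}$ is a natural basis, an $e$ with $e^2=0$ would give $eA=0$, contradicting triviality of the annihilator; hence $e^2\ne 0\ne f^2$, i.e.\ $\gamma,\delta\ne 0$. Inverting the change of basis yields $u=D^{-1}(b_2 e-b_1 f)$, and therefore
\[
e^2 = D^{-1}(\gamma b_2\, e-\gamma b_1\, f),\qquad f^2 = D^{-1}(\delta b_2\, e-\delta b_1\, f).
\]
Thus the four entries of the structure matrix in $\{e,f\}$ are nonzero scalar multiples of $\gamma b_2$, $-\gamma b_1$, $\delta b_2$, $-\delta b_1$, so their vanishing pattern is controlled entirely by the pair $(b_1,b_2)$.

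To conclude, I would split into cases. If $b_1\ne 0$ and $b_2\ne 0$ then all four entries are nonzero and the pseudo-square is the four-edge diagram (the second displayed). Otherwise, since $D\ne 0$ rules out $b_1=b_2=0$, exactly one of $b_1,b_2$ vanishes, and substituting into $(\ast)$ forces the corresponding $a_i$ to vanish as well; reading off the formulas above, precisely two of the four entries survive, and a routine check identifies the resulting 2-edge pseudo-square with the first displayed diagram or its rotation by $\pi$. The only mild obstacle is this last identification: by Lemma~\ref{isomrotationpi} the rotation by $\pi$ corresponds to the swap $e\leftrightarrow f$, so both sub-cases give the same element of $\mathfrak S(A)$ up to the $\Z_2$-action, i.e.\ the first displayed pseudo-square.
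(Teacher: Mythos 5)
Your overall strategy coincides with the paper's: parametrize a general natural basis in terms of $\{u,v\}$, read off $ef=0$ as the orthogonality constraint $(\ast)$, rule out $e^2=0$ and $f^2=0$ (the paper invokes nondegeneracy; your direct computation that $\mathrm{Ann}(A)=0$ is an acceptable substitute), invert the change of basis, and case-split on the coefficients. There is, however, a concrete slip in the inversion. From $e=a_1u+a_2v$ and $f=b_1u+b_2v$ one gets
$$u=D^{-1}(b_2\,e-a_2\,f),$$
not $D^{-1}(b_2\,e-b_1\,f)$: you took the first column of the adjugate instead of the first row. Hence the four structure constants are nonzero multiples of $\gamma b_2$, $-\gamma a_2$, $\delta b_2$, $-\delta a_2$, and the vanishing pattern is governed by the pair $(a_2,b_2)$, not by $(b_1,b_2)$ as you assert. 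In particular your first case, ``$b_1\ne0$ and $b_2\ne0$ implies all four entries are nonzero,'' does not follow from the (corrected) formulas alone: one must additionally use $(\ast)$ to see that $b_1\ne 0$ forces $a_2\ne0$ (if $a_2=0$ then $(\ast)$ gives $a_1b_1\alpha=0$, so $a_1=0$ and $e=0$, a contradiction).

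The conclusions of all your cases do survive, because $(\ast)$ together with $e,f\ne0$ yields $b_1=0\iff a_2=0$, so the zero pattern of $(a_2,b_2)$ happens to coincide with that of $(b_1,b_2)$; but this equivalence is exactly the content that must be proved, and as written your argument rests on an incorrect expression for the entries. Once repaired, your proof is essentially the paper's: in the notation $e=xu+yv$, $f=x'u+y'v$ the paper computes $e^2=\frac{x^2+y^2\beta}{\Delta}(y'e-yf)$ and $f^2=\frac{x'^2+y'^2\beta}{\Delta}(y'e-yf)$, deduces $y\ne0$ from $x'\ne0$ via the orthogonality relation, and splits on $x'$ and $y'$ exactly as your corrected case analysis would.
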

\begin{proof}

First notice that, without loss of generality, we can assume that $u^2=u$ and $v^2=\beta u$.

Let $\{e,f\}$ be a natural basis of $A$. Then $e=xu + y v$ and $f=x'u + y' v $, with $\Delta=xy'-x'y \neq 0 $ and $xx'+\b yy'=0$. 

If $x'=0$ then $e^2=xe$ and $f^2=\frac{y'^2\b}{x} e$; hence the square is the left one.

If $x'\neq 0$ then $e^2=\frac{x^2+y^2\b}{\Delta}(y'e-yf)$ and $f^2=\frac{x'^2+y'^2\b}{\Delta}(y'e-yf)$, with $y\neq 0$ (notice that $x^2+y^2\b$ and $x'^2+y'^2\b$ are non-zero, since $A$ is nondegenerate). Therefore, if $y'=0$ then the diagram is the one in the left hand side in the statement of the lemma, and if $y'\neq 0$ then the diagram is the one in the right side.
\end{proof}

\begin{corolary}
The square of the algebra $A_{8, \a}$ is  $\mathfrak{D}_5 \cup \mathfrak{D}_8$.
\end{corolary}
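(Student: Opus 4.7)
My plan is to establish the two inclusions $\mathfrak{S}(A_{8,\alpha})\subseteq\mathfrak{D}_5\cup\mathfrak{D}_8$ and $\mathfrak{D}_5\cup\mathfrak{D}_8\subseteq\mathfrak{S}(A_{8,\alpha})$ separately. The first one is essentially free: Lemma \ref{queimada} asserts that every pseudo-square attached to a natural basis of $A_{8,\alpha}$ coincides with one of the two diagrams displayed in its statement, and those are the chosen representatives of $\mathfrak{D}_8$ (two adjacent edges) and $\mathfrak{D}_5$ (all four edges) respectively.

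For the reverse inclusion I need to exhibit at least one natural basis producing each of the two shapes. The defining basis $\{u,v\}$ gives a pseudo-square with exactly the left and top edges (labelled $1$ and $\alpha$), hence $\mathfrak{D}_8\subseteq\mathfrak{S}(A_{8,\alpha})$ is immediate. For $\mathfrak{D}_5$, following the computation in the proof of Lemma \ref{queimada}, I need a natural basis $\{e,f\}=\{xu+yv,\,x'u+y'v\}$ in which all four scalars $x,y,x',y'$ are nonzero; when this happens the formulas given there automatically force the resulting structure matrix to have no vanishing entries, so the pseudo-square has all four edges and lies in $\mathfrak{D}_5$.

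A natural candidate family, obtained by imposing orthogonality $xx'+\alpha yy'=0$ explicitly, is $e=-\alpha\mu u+v$, $f=u+\mu v$ with $\mu\in\K^\times$. A quick check using $u^2=u$, $v^2=\alpha u$, $uv=0$ gives $ef=-\alpha\mu u+\alpha\mu u=0$ identically, and linear independence amounts to $-\alpha\mu^2-1\neq 0$, i.e.\ $\mu^2\neq -\alpha^{-1}$. The main obstacle is to secure such a $\mu$ over an arbitrary field: at most two values of $\mu$ (the two square roots of $-\alpha^{-1}$, when they exist) are forbidden, so the choice succeeds whenever $\K^\times$ has enough elements. Once a valid $\mu$ is fixed, substituting into the formulas from the proof of Lemma \ref{queimada} yields four nonzero structure constants, producing the desired full-square pseudo-square and completing the inclusion $\mathfrak{D}_5\subseteq\mathfrak{S}(A_{8,\alpha})$.
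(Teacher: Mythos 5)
The paper offers no proof of this corollary at all: it is presented as an immediate consequence of Lemma \ref{queimada}, which in fact only yields the containment $\mathfrak{S}(A_{8,\alpha})\subseteq\mathfrak{D}_5\cup\mathfrak{D}_8$. Your two-inclusion strategy is therefore more careful than the source, and the substance of your argument is correct: the basis $\{u,v\}$ witnesses $\mathfrak{D}_8$ (and its $\pi$-rotation comes for free from the reordered basis $\{v,u\}$), and your family $e=-\alpha\mu u+v$, $f=u+\mu v$ does satisfy $ef=0$; a direct computation gives $e^2=-\alpha\mu e+\alpha f$ and $f^2=-\mu e+f$, so all four structure constants are nonzero exactly when $\mu\ne 0$ and $\alpha\mu^2+1\ne 0$, the latter being simultaneously the linear-independence condition.

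The caveat you leave open (``whenever $\K^\times$ has enough elements'') is, however, not a removable technicality but a genuine boundary of the statement. A valid $\mu$ exists precisely when some nonzero square of $\K$ differs from $-\alpha^{-1}$; this fails for $\K=\mathbb{F}_2$ (any $\alpha$) and for $\K=\mathbb{F}_3$ with $\alpha=-1$, and in those two cases one can check by exhausting all natural bases that \emph{no} pseudo-square of $A_{8,\alpha}$ lies in $\mathfrak{D}_5$ (over $\mathbb{F}_2$ the only natural basis is $\{u,v\}$ up to order; over $\mathbb{F}_3$ with $\alpha=-1$ every quadruple $x,y,x',y'\in\K^\times$ with $xx'+\alpha yy'=0$ has $xy'-x'y=0$). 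So the corollary, read as an equality over an arbitrary field, is false there, and your proof cannot be completed as stated. To finish, you should either add the hypothesis $|\K|\ge 4$ (together with the observation that $\mathbb{F}_3$, $\alpha=1$ also works, since then $\mu=1$ is admissible), or restate the conclusion as $\mathfrak{S}(A_{8,\alpha})\subseteq\mathfrak{D}_5\cup\mathfrak{D}_8$ with equality outside the two exceptional cases. Since at most the two square roots of $-\alpha^{-1}$ and $\mu=0$ are excluded, $|\K|\ge 4$ always suffices, so your construction settles every remaining case.
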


\begin{lemma} The evolution algebras $A_{8, \a}$ and $A_{8, \b}$ (for $\a, \b \in \K^\times$) are isomorphic if and only if  $\overline{\a} = \overline{\b}$, where classes are considered in $G_2$. 
\end{lemma}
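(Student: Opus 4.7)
The plan is to exploit that, in both algebras, the subspace $A^2$ is one dimensional and the distinguished basis element $u$ is (up to scalar) characterized algebraically by two facts: it spans $A^2$, and it satisfies $u^2=u$. This should pin down the image of $u$ under any isomorphism, and then the remaining degree of freedom in the image of $v$ will be governed exactly by the class of $\alpha/\beta$ modulo squares.

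Let $\{u,v\}$ and $\{u',v'\}$ be the natural bases of $A_{8,\alpha}$ and $A_{8,\beta}$ described in the statement (so $u^2=u$, $v^2=\alpha u$, $uv=0$, and similarly for the primed version with $\beta$). For the forward direction, I would take an algebra isomorphism $\varphi\colon A_{8,\alpha}\to A_{8,\beta}$ and observe that $\varphi(A_{8,\alpha}^{2})=A_{8,\beta}^{2}$, hence $\varphi(u)=\rho u'$ for some $\rho\in\K^\times$. Applying $\varphi$ to $u^2=u$ gives $\rho^2 u'=\rho u'$, forcing $\rho=1$. Now write $\varphi(v)=a u'+b v'$; the relation $uv=0$ becomes $\varphi(u)\varphi(v)=u'(au'+bv')=au'=0$, so $a=0$, and injectivity forces $b\in\K^\times$. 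Finally, comparing $\varphi(v^2)=\alpha u'$ with $\varphi(v)^2=b^2 v'^2=b^2\beta u'$ yields $\alpha=b^2\beta$, i.e.\ $\overline\alpha=\overline\beta$ in $G_2$.

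For the converse, given $\alpha=b^2\beta$ with $b\in\K^\times$, define the linear map $\varphi$ by $\varphi(u)=u'$ and $\varphi(v)=b v'$; then a direct check using $u'^2=u'$, $v'^2=\beta u'$ and $u'v'=0$ shows that $\varphi$ respects the products $u^2$, $v^2$ and $uv$, so it extends to an algebra isomorphism between $A_{8,\alpha}$ and $A_{8,\beta}$.

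There is no real obstacle here; the only delicate point is remembering that, because $A_{8,\alpha}$ is non-perfect, \cite[Corollary 4.7]{EL} does not apply and one cannot assume a priori that $\varphi$ sends natural basis to natural basis up to permutation and scaling (indeed, by Lemma \ref{queimada}, there are natural bases of $A_{8,\alpha}$ with pseudo-square in $\mathfrak{D}_5$). The argument above bypasses this entirely by working intrinsically with $A^2$ and the idempotency of $u$, so that the identification $\varphi(u)=u'$ (not merely $\varphi(u)\in\K u'$) drops out before we ever inspect $\varphi(v)$.
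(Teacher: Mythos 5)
Your proof is correct and follows essentially the same route as the paper: the paper also fixes the two natural bases, derives $\theta(u)=u'$ from idempotency (writing $\theta(u)=au'+bv'$ and squaring, rather than invoking $\theta(A^2)=A^2$ first, but the computation is the same), kills the $u'$-component of $\theta(v)$ via $uv=0$, and reads off $\alpha=d^2\beta$; the converse construction $\varphi(u)=u'$, $\varphi(v)=\tau v'$ is identical. Your closing remark about why \cite[Corollary 4.7]{EL} is unavailable here is a sensible observation, but the substance of the argument matches the paper's.
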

\begin{proof} Fix a natural basis $\{u,v\}$ in $A_{8,\a}$ such that $u^2=u$, $v^2=\a u$. Fix also a natural basis $\{u',v'\}$ of $A_{8,\b}$ with $u'^2=u'$, $v'^2=\b u'$.

 Assume first that $\bar\a=\bar\b$, that is, $\a\b^{-1}=\tau^2$.
Define $\Phi\colon A_{8,\a}\to A_{8, \b}$ as the linear extension of $\Phi(u)=u'$, $\Phi(v)=\tau v'$. 
To prove that $\Phi$ is an algebra homomorphism if suffices to see that
$\Phi(v^2)=\Phi(v)^2$ (since $u$, $u'$ are idempotents and $uv=u'v'=0$).
Indeed, $\Phi(v^2)=\Phi(\a u)=\a u'$ and $\Phi(v)^2=(\tau v')^2=\tau^2 v'^2=\tau^2\b u'=\a u'$. Thus
$A_{8,\a}\cong A_{8,\b}$. 

Reciprocally assume that there is an isomorphism $\theta\colon A_{8,\a}\to A_{8,\b}$.
Write $\theta(u)=a u'+b v'$ and $\theta(v)=c u'+d v'$ for some $a,b,c,d\in\K$. Then 
$$\theta(u)=\theta(u)^2=(a^2+b^2\b) u', \quad \hbox{which implies}\quad \begin{cases}b=0,\cr a^2+b^2\b=a.\end{cases}$$
So far we have $b=0$, $a=1$ or, equivalently, $\theta(u)=u'$. Also
$$\ 0=\theta(u)\theta(v)=u'(c u'+d v')=c u' \  \hbox{ and hence } c=0.$$
So $\theta(v)=d v'$ and since 
$$\begin{cases}\theta(v^2)=\a\theta(u)=\a u',\cr 
\theta(v)^2=(d v')^2=d^2\b u',\end{cases}$$
 we get $d^2 \b =\a$ so that $\bar\b=\bar\a$ and this finishes the proof of the lemma.
 \end{proof}

 Summarizing the possibilities  we have the result that follows.
  
\begin{proposition}\label{SumaDeTres} If $A$ is a two dimensional evolution algebra with $\dim(A^2)=1$, $A^3\ne 0$ and $(A^2)^2\ne 0$,
 then either $A$ is isomorphic to $A_7$, which is decomposable and isomorphic to $\K\times\K$ with product $(x,y)(x',y')=(xx',0)$, or 
 $A$ is indecomposable and isomorphic to the algebra $A_{8,\a}$ ($\a\ne 0$) 
 with natural basis $\{u,v\}$ and product $u^2=u$, $v^2=\a u$. In the latter case there are so many isomorphism classes as the order of the group $G_2$. More precisely $A_{8,\a}\cong A_{8,\b}$ if and only if
 $\bar\a=\bar \b$ in $G_2$. The associated squares are   $\mathfrak{D}_7 \cup \mathfrak{D}_9$ and $\mathfrak{D}_5\cup \mathfrak{D}_8$,  respectively.
\end{proposition}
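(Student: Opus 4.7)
The plan is to assemble the preceding case analysis, since Proposition \ref{SumaDeTres} is a summary theorem for Case 3. Starting from the hypotheses $\dim(A^2)=1$, $A^3\neq 0$, and $(A^2)^2\neq 0$, I would invoke the derivation already done in Case 3 that produces a natural basis $\{u,v\}$ with $u^2=u$ and $v^2=\beta u$ for some $\beta\in\K$. The dichotomy $\beta=0$ versus $\beta\neq 0$ is then exactly the split into Cases 3.1 and 3.2, and my job is to translate each branch into the language of the proposition.

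For the first branch ($\beta=0$), I would observe that setting $I_1=\K u$ and $I_2=\K v$ makes $A=I_1\oplus I_2$ a direct sum of ideals, where $I_1$ is the one-dimensional associative algebra $\K$ (via $u^2=u$) and $I_2$ has zero multiplication; the map $au+bv\mapsto (a,b)$ then gives the claimed isomorphism with $\K\times\K$ equipped with $(x,y)(x',y')=(xx',0)$. For the claim that the associated square is $\mathfrak{D}_7\cup\mathfrak{D}_9$, I would cite the argument in Case 3.1 which showed that any natural basis forces $e^2=0$ for one basis vector, confining the pseudo-square to $\mathfrak{D}_7\cup\mathfrak{D}_9$, and that both kinds of pseudo-squares actually occur thanks to Lemma \ref{orujonorrepe}.

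For the second branch ($\beta\neq 0$), the algebra is by definition $A_{8,\beta}$. Indecomposability follows from checking that no nonzero proper ideal of the form $\K(au+bv)$ can be complemented: a direct computation using $u^2=u$, $v^2=\beta u$ and $uv=0$ shows any ideal contains $u$ once $b\neq 0$, so the ideal lattice does not admit a decomposition $A=I_1\oplus I_2$. The classification $A_{8,\alpha}\cong A_{8,\beta}\Leftrightarrow \bar\alpha=\bar\beta$ in $G_2$ is exactly the preceding lemma, and the square $\mathfrak{D}_5\cup\mathfrak{D}_8$ is exactly the content of Lemma \ref{queimada} combined with the corollary immediately following it; I would just cite these.

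The only genuine work is verifying that the two branches produce non-isomorphic algebras, so the enumeration has no overlap. This is easily settled by comparing annihilators: in $A_7$ the element $v$ lies in $\mathrm{Ann}(A)$, whereas in $A_{8,\alpha}$ with $\alpha\neq 0$ a direct calculation $(au+bv)u=au$, $(au+bv)v=b\alpha u$ forces $a=b=0$, so $\mathrm{Ann}(A_{8,\alpha})=0$. I do not expect any real obstacle: the proposition is essentially bookkeeping, and the only pitfall would be misciting which lemma handles which portion, so I would be careful to reference Lemma \ref{orujonorrepe} for the square of $A_7$, Lemma \ref{queimada} for the square of $A_{8,\alpha}$, and the $G_2$-lemma for the parameter classification.
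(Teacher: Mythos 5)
Your proposal is correct and follows essentially the same route as the paper: the paper's own proof likewise treats the proposition as bookkeeping over Cases 3.1 and 3.2 and the intervening lemmas, with the only new content being the indecomposability of $A_{8,\alpha}$, which it also derives from the fact that $\K u$ is the unique one-dimensional ideal. Your extra check via annihilators that the two branches are disjoint is a harmless (and correct) addition.
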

\begin{proof}The only thing to be proved is that the algebra $A_{8,\a}$ ($\a\ne 0$) is indecomposable. This follows from the fact that the only one dimensional ideals of this algebra are the (vector) subspaces generated by $au$ for any $a\in \K^\times$.
\end{proof}

\begin{remark}
\rm
For a field in which every element has a square root the group $G_2$ is trivial, hence
in this case all the algebras $A_{8, \a}$ are isomorphic. This is the case for any algebraically closed field. If $\K=\mathbb{R}$, for example, then $G_2\cong\Z_2$ and hence there are two isomorphism classes.
\end{remark}

Now we collect all the information.

\begin{theorem}\label{NonPerfectCase}
Let $A$ be a non-perfect two dimensional evolution algebra over an arbitrary field. The  square of $A$ is one of the following: $\mathfrak{S}_5=\mathfrak{D}_5$, $\mathfrak{S}_6=\mathfrak{D}_6$, $\mathfrak{S}_7=\mathfrak{D}_7 \cup \mathfrak{D}_9$, and $\mathfrak{S}_8=\mathfrak{D}_5 \cup \mathfrak{D}_8$.

And, up to isomorphism:

\begin{enumerate}[\rm(i)]
    \item $A_5$ is the only evolution algebra such that $\mathfrak{S}(A_5)=\mathfrak{S}_5$.
    \item $A_6$ is the only evolution algebra such that $\mathfrak{S}(A_6)=\mathfrak{S}_6$.
     \item $A_7$ is the only evolution algebra such that $\mathfrak{S}(A_7)=\mathfrak{S}_7$.
     \item For each element $\overline{\alpha}\in G_2$, the algebra $A_{8, \alpha}$ is the only evolution algebra such that  $\mathfrak{S}(A_{8, \alpha})=\mathfrak{S}_8$.
    \end{enumerate}
\end{theorem}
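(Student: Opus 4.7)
My plan is to assemble the case analysis already carried out in Propositions \ref{cuboNonulo1}, \ref{Dseis} and \ref{SumaDeTres} into a single classification statement. Since $A$ is non-perfect we have $A^2 \subsetneq A$, hence $\dim A^2 \in \{0,1\}$; the trivial case $\dim A^2 = 0$ (square $\mathfrak{D}_0$) sits outside the enumeration of the statement and has been handled earlier, so I assume $\dim A^2 = 1$ and fix $A^2 = \K u$ with $u \neq 0$. The argument then proceeds by the trichotomy already established in the text, splitting on the values of $A^3$ and $(A^2)^2$: case (a), $A^3 \neq 0$ and $(A^2)^2 = 0$; case (b), $A^3 = 0$; and case (c), $A^3 \neq 0$ and $(A^2)^2 \neq 0$, the last subdividing according to the scalar $\beta$ in $v^2 = \beta u$ into (c.1) $\beta = 0$ and (c.2) $\beta \neq 0$.

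Each of the four cases would be settled by invoking the corresponding intermediate result. Case (a) gives $A \cong A_5$ with $\mathfrak{S}(A)$ containing $\mathfrak{D}_5$ by Proposition \ref{cuboNonulo1}; case (b) gives $A \cong A_6$ with a pseudo-square in $\mathfrak{D}_6$ by Proposition \ref{Dseis}; cases (c.1) and (c.2) are packaged in Proposition \ref{SumaDeTres}, giving $A \cong A_7$ in (c.1) and $A \cong A_{8,\alpha}$ in (c.2), with isomorphism classes in (c.2) in bijection with $G_2$ by the computation that $\theta(u)=u'$ and $\theta(v)=dv'$ force $d^2\beta = \alpha$.

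It then remains to verify the square equalities $\mathfrak{S}(A_i)=\mathfrak{S}_i$. For $A_5$ and $A_6$ a single natural basis already exhibits a pseudo-square in $\mathfrak{D}_5$, respectively $\mathfrak{D}_6$, and the rotation-by-$\pi$ symmetry of Lemma \ref{isomrotationpi} contributes nothing new because the chosen pseudo-square is self-symmetric; one still needs to argue that no other natural basis produces a pseudo-square in a different class, but in these two cases the rigidity is built into Propositions \ref{cuboNonulo1} and \ref{Dseis}. For $A_7$ every natural basis is forced into $\mathfrak{D}_7 \cup \mathfrak{D}_9$ by the one-dimensional annihilator constraint spelled out in Case 3.1, which rules out the left vertical and bottom horizontal sides, together with $(A^2)^2 \neq 0$ excluding $\mathfrak{D}_6$; Lemma \ref{orujonorrepe} then shows that both $\mathfrak{D}_7$ and $\mathfrak{D}_9$ are actually attained. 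For $A_{8,\alpha}$ both the restriction to $\mathfrak{D}_5 \cup \mathfrak{D}_8$ and the realization of both classes are the content of Lemma \ref{queimada}, whose proof computes the structure constants in an arbitrary natural basis and discriminates between $\mathfrak{D}_5$ and $\mathfrak{D}_8$ according to whether $x'=0$ or $x',y'\neq 0$.

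The main obstacle I anticipate is this last verification: showing that $A_7$ and $A_{8,\alpha}$ do not accidentally produce a pseudo-square outside the claimed union of classes. This is precisely where Lemmas \ref{orujonorrepe} and \ref{queimada}, together with the annihilator analysis of Cases 3.1 and 3.2, do the real work; once those are in hand, the present theorem becomes a direct repackaging of the intermediate propositions, with the exhaustiveness of the list $\mathfrak{S}_5,\mathfrak{S}_6,\mathfrak{S}_7,\mathfrak{S}_8$ following from the exhaustiveness of cases (a), (b), (c.1), (c.2).
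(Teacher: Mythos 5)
Your proposal is correct and follows essentially the same route as the paper: Theorem \ref{NonPerfectCase} is presented there precisely as a compilation of Propositions \ref{cuboNonulo1}, \ref{Dseis} and \ref{SumaDeTres} together with Lemmas \ref{orujonorrepe} and \ref{queimada} and the $G_2$-classification of the $A_{8,\alpha}$, organized by the same trichotomy on $A^3$ and $(A^2)^2$. You also correctly locate where the substantive verification of the square equalities lies, so nothing further is needed.
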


To finish, we study the ideal structure  of the non-perfect evolution algebras.

\begin{proposition}\label{IdealesNoPerfecto}
Every linearly independent set of vectors generates a non-zero ideal of $A_0$. $A_7$ has two one  dimensional (evolution) ideals.
Any other non-perfect two dimensional evolution algebra has a unique one  dimensional (evolution) ideal.

In terms of squares, except for $A_5$, any non-perfect two dimensional evolution algebra has a one  dimensional ideal if and only if a pseudo-square with least lines in $\mathfrak{S}(A)$ has not the bottom side, or has  not the top side. Each of this cases determines a one  dimensional ideal, generated by the corresponding vector in the natural basis relative to which the pseudo-square is given.
\end{proposition}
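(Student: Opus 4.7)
The plan is a case-by-case analysis using Theorem~\ref{NonPerfectCase}, which gives us the five representatives $A_0, A_5, A_6, A_7, A_{8,\alpha}$ of the non-perfect two dimensional evolution algebras. For each one I would fix the canonical natural basis described in Propositions~\ref{cuboNonulo1}, \ref{Dseis} and \ref{SumaDeTres}, write a generic element as $x = au + bv$ (or $ae + bf$ for $A_5$), and compute the products $x e_i$ to determine exactly which pairs $(a,b)$ yield a one dimensional ideal.

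For $A_0$ we have $A_0^2=0$, so every $\K$-subspace is an ideal and the first claim is immediate. For $A_5$, with $e^2 = e-f$, $f^2 = -e+f$, $ef=0$: if $I = \K(ae+bf)$ is an ideal then $(ae+bf)e = a(e-f)\in I$ and $(ae+bf)f = b(-e+f)\in I$. A short linear algebra argument shows that if $a\neq 0$ then $b=-a$, and symmetrically if $b\neq 0$, so the only one dimensional ideal is $\K(e-f)$; this is an evolution ideal because $(e-f)^2=0$. For $A_6$, $A_7$ and $A_{8,\a}$ the computations are parallel and easier: multiplying $au+bv$ by $u$ and $v$ pins down the admissible pairs $(a,b)$ to the axis directions, yielding the unique ideal $\K u$ for $A_6$ and $A_{8,\a}$ (the obstruction in the second coordinate being $v^2=u$ and $v^2=\a u$ respectively, with $\a\ne 0$), and the two ideals $\K u, \K v$ for $A_7$ (since $v^2 = 0$ removes the obstruction).

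To translate this into the square criterion, I will use the convention $e_j^2 = \sum_i \omega_{ij} e_i$: the bottom edge of a pseudo-square encodes $\omega_{21}$ and the top edge encodes $\omega_{12}$, so \emph{bottom edge missing} is equivalent to $\K e_1$ being an ideal and, symmetrically, \emph{top edge missing} is equivalent to $\K e_2$ being an ideal. Matching the fixed bases above against the minimal pseudo-squares $\mathfrak{D}_6$, $\mathfrak{D}_7$ and $\mathfrak{D}_8$ in the squares $\mathfrak{S}_6$, $\mathfrak{S}_7$, $\mathfrak{S}_8$ then gives the geometric part of the statement and identifies the generator of the ideal as the corresponding natural basis vector.

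The delicate point, and also the reason for singling out $A_5$, is the following: the unique pseudo-square $\mathfrak{D}_5$ of $A_5$ has all four edges present, so the top/bottom criterion would predict no one dimensional ideal, yet the algebra does contain $\K(e-f)$. The resolution is that this ideal is \emph{not} spanned by a natural basis vector, and a pseudo-square only records the coefficients of the squares of the basis elements in that very basis; hence ideals supported on non-basis lines are invisible to the square. This conceptual remark is what I expect to be the main obstacle in writing the proof cleanly, since it is the only point where the purely combinatorial/geometric reading of $\mathfrak{S}(A)$ fails to capture the full ideal structure and must be replaced by a direct algebraic computation.
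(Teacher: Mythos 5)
Your proposal is correct and follows essentially the same route as the paper: the paper states this proposition without a separate proof because the required facts are already established by exactly the kind of direct basis computations you describe (the uniqueness of $\K u$ for $A_6$ in Case~2, the two ideals of $A_7\cong\K\times\K$ in Case~3.1, the uniqueness of $\K u$ for $A_{8,\a}$ in the proof of Proposition~\ref{SumaDeTres}, and $\K(e-f)$ for $A_5$ from Proposition~\ref{cuboNonulo1} and the remark following it). Your identification of the top/bottom edges with $\omega_{12},\omega_{21}$ and your explanation of why $A_5$ must be excepted (its unique ideal is not spanned by a natural basis vector, so the pseudo-square cannot detect it) are both accurate and consistent with the paper's conventions.
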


\section{Identities of two dimensional evolution algebras}

In this section we start the study of identities in evolution algebras. Note that since every evolution algebra is commutative, the degree two identities satisfied by evolution algebras are linear multiples of $xy-yx$. 
We  determine the identities of degree three and four in the case of two dimensional evolution algebras.

Let $\com_\K(X)$ denote the (nonassociative) free commutative $\K$-algebra on a set of generators $X$. For a natural number $n$, an element $w\in\com_\K(X)$ is said to be $n$-\emph{linear} if
it is a linear combination of words consisting of $n$ different elements of $X$. Let $C_3$ denote the vector space of $3$-linear identities of $\com_\K(X)$.  Our aim in this section is to answer the question of which elements of $C_3$ are satisfied by the two dimensional evolution algebras. We will use the squares introduced in the previous section.

An arbitrary element
$w:=w(x,y,z)=\lambda _1 (x y) z+\lambda _2 (y z)
   x+\lambda _3 (z x) y\in C_3$, where $x, y, z \in X$ and $\lambda_i \in \K$ (for $i\in \{1, 2,3\}$), is an identity for a two dimensional  evolution algebra $A$ if and only if $w(e_i,e_j,e_k)=0$  for all $i,j,k\in\{1,2\}$, where $\{e_1,e_2\}$ is any natural basis of $A$.
   
We divide the study into two pieces, depending if the evolution algebra is perfect or not. The computations are not included; they have been checked using \cite{Wolfram}.

\subsection{Identities of two dimensional perfect evolution algebras}
\hskip 5.cm

\noindent
We first study the identities of degree 3 satisfied by a two dimensional perfect evolution algebra $A$. If $A$ has square $\mathfrak{D}_1$, the set of equations $w(e_i,e_j,e_k)=0$ provides the solution $\lambda_1+\lambda_2+\lambda_3=0$ and from here, we get the identity $\lambda _1 (x y) z+\lambda _2 (y z)
   x-(\lambda _1+\lambda_2) (z x) y$, so the vector space of solutions is generated by the associative identity (this can be obtained easily by taking first $\lambda_1=1$ and $\lambda_2=0$ and then $\lambda_1=0$ and $\lambda_2=1$). Thus, the only $3$-linear identity satisfied by the algebra $A_1$ is the associative identity.
   
 To study the remaining algebras we use Theorem \ref{AsocDiag} and solve the different linear systems having eight equations and three variables which appear. It happens that the only solution is the trivial. We omit the details and collect the result.

   \begin{proposition}\label{IdGradoTres}
   The only perfect two dimensional evolution algebra satisfying a nontrivial identity of degree $3$ is $A_1$ and this identity is the associative. 
   \end{proposition}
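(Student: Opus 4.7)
The plan is a uniform case-by-case analysis over the five isomorphism classes provided by Theorem~\ref{AsocDiag}, using the normalised structure matrices from Lemma~\ref{nerja}. For each algebra one substitutes the generic $3$-linear element $w(x,y,z) = \lambda_1 (xy)z + \lambda_2 (yz)x + \lambda_3 (zx)y$ on every triple $(e_i, e_j, e_k)$ with $i,j,k \in \{1,2\}$ in a fixed natural basis, and asks which $(\lambda_1, \lambda_2, \lambda_3) \in \K^3$ make $w(e_i,e_j,e_k)$ vanish for every such triple.

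The whole linear system collapses quickly because $e_1 e_2 = 0$ in every natural basis. Indeed, in
$$w(e_i, e_j, e_k) = \lambda_1 (e_i e_j) e_k + \lambda_2 (e_j e_k) e_i + \lambda_3 (e_k e_i) e_j,$$
the first summand survives only if $i=j$, the second only if $j=k$, and the third only if $i=k$. Since there are only two basis vectors, every triple has at least two coinciding indices, and a quick case split shows that the diagonal triples $(i,i,i)$ produce the single scalar equation $\lambda_1 + \lambda_2 + \lambda_3 = 0$, whereas each triple with exactly two equal indices yields an equation of the form $\lambda_s \cdot (e_i^2 \cdot e_k) = 0$ for some $s \in \{1,2,3\}$ and some $i \neq k$. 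Consequently, the decisive question becomes whether there exist $i \neq k$ with $e_i^2 \cdot e_k \neq 0$: if such a pair exists, examining the three triples $(i,i,k)$, $(k,i,i)$ and $(i,k,i)$ isolates $\lambda_1$, $\lambda_2$ and $\lambda_3$ in turn and forces each of them to vanish.

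Applying this criterion algebra by algebra is immediate. For $A_1$ one has $e_i^2 = e_i$, so $e_i^2 \cdot e_k = 0$ whenever $i \neq k$, and the only surviving condition is $\lambda_1 + \lambda_2 + \lambda_3 = 0$; this two-dimensional solution space is spanned, for instance, by the vector $(1,-1,0)$, which corresponds to $(xy)z - (yz)x$ and, by the commutativity $(yz)x = x(yz)$, is precisely the associative identity. Any other generator of the solution space is an associativity instance after a variable relabelling, so up to equivalence $A_1$ satisfies only the associative identity, consistent with $A_1 \cong \K \times \K$. For $A_{2,\alpha}$, $A_{3,\alpha}$, $A_{4,\alpha}$ and $A_{5,\alpha,\beta}$ one computes directly, using the structure matrices of Lemma~\ref{nerja}, that $e_1^2 \cdot e_2 = \alpha e_1$, $e_2^2 \cdot e_1 = \alpha e_1$, $e_2^2 \cdot e_1 = \alpha e_1$ and $e_1^2 \cdot e_2 = \alpha\beta e_1 + \beta e_2$ respectively; since $\alpha, \beta \in \K^\times$ each of these products is nonzero, the criterion above applies, and no nontrivial $3$-linear identity survives.

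There is no genuine obstacle in the argument; the main conceptual point is the reduction above, which singles out $A_1$ as exactly the algebra in which every $e_i^2$ is a scalar multiple of $e_i$, so that no cross product $e_i^2 \cdot e_k$ with $i \neq k$ exists to break the $\lambda_1 + \lambda_2 + \lambda_3 = 0$ degeneracy. This approach moreover avoids any appeal to computer algebra, since the entire verification reduces to checking a single product $e_i^2 \cdot e_k$ in each of the five normalised models.
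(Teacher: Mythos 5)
Your proof is correct and follows the same overall strategy as the paper --- reduce to the five normalised models of Lemma~\ref{nerja} via Theorem~\ref{AsocDiag} and solve the linear system $w(e_i,e_j,e_k)=0$ over all triples --- but where the paper merely asserts that for $A_{2,\alpha},\dots,A_{5,\alpha,\beta}$ the eight-equation systems have only the trivial solution and defers the verification to Mathematica, you supply a uniform hand argument: since $e_1e_2=0$, each triple with exactly two equal indices contributes an equation of the form $\lambda_s\,(e_i^2e_k)=0$, so the existence of a single pair $i\neq k$ with $e_i^2e_k\neq 0$ forces $\lambda_1=\lambda_2=\lambda_3=0$, and such a pair exists exactly for the perfect algebras other than $A_1$. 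This replaces the paper's omitted computer check with a transparent one-line criterion, which is a genuine gain. One small inaccuracy to fix: the diagonal triple $(i,i,i)$ yields $(\lambda_1+\lambda_2+\lambda_3)\,e_i^2e_i=0$, which is the equation $\lambda_1+\lambda_2+\lambda_3=0$ only when $e_i^2e_i\neq 0$; for $A_{2,\alpha}$ one has $e_1^2e_1=e_2e_1=0$ and $e_2^2e_2=\alpha\,e_1e_2=0$, so there the diagonal triples are vacuous. This does not affect your conclusion --- the cross-product criterion already annihilates all three coefficients for that algebra --- but the blanket claim that the diagonal triples always produce $\lambda_1+\lambda_2+\lambda_3=0$ should be qualified accordingly.
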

   
  The second step is to consider the space $C_4$ of $4$-linear identities which are satisfied. Using again Theorem \ref{AsocDiag}, it is enough to take into account the algebras associated to $\mathfrak{D}_i$ for $i=1,\ldots,5$.
 Let $w(x,y,z,t)\in C_4$, and write
   $$w(x,y,z,t)=\lambda _1 (x y) (z t)+\lambda _2 (x
   z) (y t)+\lambda _3 (x t) (y
   z)+\lambda _4 (x (y z)) t+\lambda _5
   (y (x z)) t+$$
   $$\lambda _6 (z (y
   x)) t+\lambda _7 (t (x y)) z+\lambda
   _8 (x (t y)) z+\lambda _9 (y (x
   t)) z+\lambda _{10} (y (t z))
   x+$$
   $$\lambda _{11} (t (y z)) x+\lambda _{12}
   (z (t y)) x+\lambda _{13} (x (t
   z)) y+\lambda _{14} (z (t x))
   y+\lambda _{15} (t (x z)) y.$$
   
Then we solve the linear system 
\begin{equation}\label{mistela}
    w(e_i,e_j,e_k,e_l)=0,
\end{equation} 
for $i,j,k,l\in\{1,2\}$, where $\{e_1, e_2\}$ is, in any case, the natural basis having product as given in Lemma \ref{nerja}.
We analyze these systems (having 16 equations with 15 variables) for the different $\mathfrak{D}_i's$.
\medskip

$\mathfrak{D}_1$. Since the algebra $A_1$ is both commutative and associative, the  general solution of the system ($\ref{mistela}$) is $\sum_1^{15}\lambda_i=0$. Hence, a basis for the space of solutions is:  
    \begin{enumerate}
        \item $\lambda_1=1$, $\lambda_2=-1$, $\lambda_i=0$  ($i\ne 1,2$).
        \item $\lambda_1=1$, $\lambda_3=-1$, $\lambda_i=0$ ($i\ne 1,3$).
        \item $\lambda_1=1$, $\lambda_j=-1$, $\lambda_k=0$ ($k\ne 1,j$).
        \item $\lambda_1=1$, $\lambda_{15}=-1$, $\lambda_i=0$ ($i\ne 1,15$).
    \end{enumerate}
    These fourteen identities are trivial in the presence of commutativity and associativity.
\medskip

$\mathfrak{D}_2$. The system (\ref{mistela}) gives the solutions
    $$\begin{matrix}
     \lambda_3=-\lambda_1-\lambda_2, & \lambda_{10}=-\lambda_5-\lambda_9,\cr
     \lambda_{13}=-\lambda_4-\lambda_8, & \lambda_{14}=-\lambda_6-\lambda_{12},\cr
     \lambda_{15}=-\lambda_7-\lambda_{11}. & 
     \end{matrix}$$
     
     This space of solutions is $10$-dimensional but in the presence of commutativity, there are only two identities for the algebras having square $\mathfrak{D}_2$:
     
     \begin{equation}\label{anis}
     \begin{cases}
     (x y) (z t)=(x t) (y z),\cr
     (y (x z)) t=(y (t z)) x.
     \end{cases}
     \end{equation}
     $\mathfrak{D}_3$, $\mathfrak{D}_4$, $\mathfrak{D}_5$. For any of these algebras we find the same set of solutions for 
     the system (\ref{mistela}):
   
     \begin{tabular}{ll}
$\lambda _{13}=-\lambda
   _{14}-\lambda _{15}$, &
   $\lambda _{10}=-\lambda
   _{11}-\lambda _{12}$,\cr
   $\lambda _8=-\lambda _9+\lambda
   _{11}+\lambda _{15}$, &
   $\lambda _7=-\lambda
   _{11}-\lambda _{15}$,\cr
   $\lambda _6=-\lambda
   _{12}-\lambda _{14}$, &
   $\lambda _5=-\lambda _9+\lambda
   _{11}+\lambda _{12}$,\cr
   $\lambda _4=\lambda _9-\lambda
   _{11}+\lambda _{14}$, &
   $\lambda _3=-\lambda _9-\lambda
   _{14}$,\cr
   $\lambda _2=\lambda
   _9-\lambda _{11}-\lambda
   _{12}-\lambda _{15}$, &
   $\lambda _1=\lambda
   _{11}+\lambda _{12}+\lambda
   _{14}+\lambda _{15}.$
\end{tabular}

Thus the identities we found in the algebras of type $\mathfrak{D}_i$ with $i=3,4,5$, are:
\begin{equation}\label{delmono}
\begin{cases}
(x z)(y t) + (y,z,x) t +(x,t,y) z = (x t) (y z), \cr
(x y)(z t) + (y,z,t) x + (x,z,y) t + (t,y,x) z = (x z) (y t), 
\end{cases}
\end{equation}
\noindent where the associator $(x,y,z)$ is, as usual, $(x,y,z):=(xy)z-x(yz)$.

Summarizing, and checking (it is straightforward) that
\eqref{delmono} are verified by any two dimensional evolution algebra (even in the nonperfect case), we get the following.

\begin{proposition}\label{IdenGradoCuatro}
The degree four identities satisfied by the perfect two dimensional evolution algebras are the following: $A_1$ satisfies any such identity; $A_{2, \a}$ satisfies the identities \eqref{anis}; $A_{3, \a}$, $A_{4, \a}$ and $A_{5, \a, \b}$ satisfy the identities \eqref{delmono}.
Any perfect two dimensional evolution algebra satisfies the identities \eqref{delmono}.
\end{proposition}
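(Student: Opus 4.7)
The plan is to carry out the computation suggested by the paragraphs immediately preceding the statement, organized algebra by algebra according to the classification in Theorem~\ref{AsocDiag}. For each of the canonical forms supplied by Lemma~\ref{nerja}, I would substitute every $4$-tuple $(e_{i_1},e_{i_2},e_{i_3},e_{i_4})$ with $i_k\in\{1,2\}$ into the generic multilinear word
$$w(x,y,z,t)=\sum_{i=1}^{15}\lambda_i\, m_i(x,y,z,t),$$
where the $m_i$ are the $15$ monomials listed before the proposition. Expanding each $w(e_{i_1},e_{i_2},e_{i_3},e_{i_4})$ with the structure constants of the algebra and collecting the coefficients in the natural basis $\{e_1,e_2\}$ produces a homogeneous linear system of at most $16$ scalar equations in the $15$ unknowns $\lambda_1,\dots,\lambda_{15}$.

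First I would handle $A_1$, where commutativity and associativity already force $w=0$ whenever $\sum_i \lambda_i=0$; that is precisely the condition reducing the $15$ monomials to $14$ independent relations, and these are the $14$ trivial identities listed in the discussion preceding the statement. Next, for $A_{2,\alpha}$ I would solve the corresponding system: the five relations displayed in the excerpt describe a $10$-dimensional solution space, and I would then factor out the subspace already generated by commutativity to isolate the two genuinely new identities \eqref{anis}. The main verification here is that the isolated identities do not depend on the parameter $\alpha$, which follows because each offending $\alpha$-term appears symmetrically on the two sides of \eqref{anis}.

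For the remaining classes $A_{3,\alpha}$, $A_{4,\alpha}$, $A_{5,\alpha,\beta}$, I would solve the three linear systems independently and observe that they collapse to the same ten parametric relations displayed in the excerpt. The key point to explain is why three different structure matrices yield the same solution space; the reason is that in each case the system decouples into the same combinatorial pattern once one uses the fact that $e_1^2$ and $e_2^2$ both have nonzero components on $\{e_1,e_2\}$, and the ten relations are exactly those ensuring the vanishing of the coefficients of $e_1$ and $e_2$ independently of the actual nonzero values. After modding out the subspace of identities that are consequences of commutativity, one is left with the two identities \eqref{delmono}.

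Finally, to close the statement I would verify directly that \eqref{delmono} hold in $A_1$ and in $A_{2,\alpha}$ as well, thereby yielding the concluding sentence that \emph{every} perfect two dimensional evolution algebra satisfies \eqref{delmono}. For $A_1$ this is immediate from associativity, since all associators vanish and both sides collapse to $(xy)(zt)$ (up to commutativity). For $A_{2,\alpha}$ one substitutes the eight basis $4$-tuples and checks the cancellation of associators, using $e_1^2=e_2$, $e_2^2=\alpha e_1$, $e_1e_2=0$. The main obstacle is bookkeeping rather than conceptual: the systems are large and a human calculation is tedious, which is why the authors defer to symbolic computation via \cite{Wolfram}; the conceptual content of the proof is simply the tabulation of the solution spaces and the recognition that modulo commutativity they are spanned by \eqref{anis} and \eqref{delmono} respectively.
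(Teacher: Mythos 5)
Your proposal follows essentially the same route as the paper: set up the generic $15$-parameter $4$-linear element, solve the homogeneous system $w(e_{i},e_{j},e_{k},e_{l})=0$ for each canonical form from Lemma~\ref{nerja}, tabulate the solution spaces (the full space for $A_1$, the $10$-dimensional space reducing to \eqref{anis} for $A_{2,\alpha}$, and the common solution set for $A_{3,\alpha}$, $A_{4,\alpha}$, $A_{5,\alpha,\beta}$ reducing to \eqref{delmono}), and then verify \eqref{delmono} directly in the remaining cases. The paper likewise omits the explicit computations and defers to symbolic verification, so your reconstruction matches its proof in both structure and substance.
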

\medskip

\subsection{Identities for non-perfect two dimensional evolution algebras} 
\hskip 5.cm

\noindent
Identities on $A_5$. It is not difficult to check that this algebra does not satisfy nontrivial identities of degree $3$. However, since 
$(A_5^2)^2=0$ (see Proposition \ref{cuboNonulo1}), we deduce 
that the following identity holds 
\begin{equation}\label{castan1}
(xy)(zt)=0 \text{ for any } x,y,z,t.
\end{equation}
In order to find other identities of degree $4$ we define again an element
$$w(x,y,z,t)=\lambda _1 (x y) (z t)+\lambda _2 (x
   z) (y t)+\lambda _3 (x t) (y
   z)+\lambda _4 (x (y z)) t+\lambda _5
   (y (x z)) t+$$
   $$\lambda _6 (z (y
   x)) t+\lambda _7 (t (x y)) z+\lambda
   _8 (x (t y)) z+\lambda _9 (y (x
   t)) z+\lambda _{10} (y (t z))
   x+$$
   $$\lambda _{11} (t (y z)) x+\lambda _{12}
       (z (t y)) x+\lambda _{13} (x (t
   z)) y+\lambda _{14} (z (t x))
   y+\lambda _{15} (t (x z)) y,$$
and solve the linear system 
\begin{equation}\label{mistelaDos}
    w(e_i,e_j,e_k,e_l)=0
\end{equation} 
for $i,j,k,l\in\{1,2\}$, considering of course the basis $\{e_1,e_2\}$ with $e_1^2=e_1-e_2$ and $e_2^2=-e_1+e_2$. We get the identities:
\begin{equation}
\begin{cases}\label{castan2}
(t (x y)) z = (z (y x)) t,\cr
(x(y z))
   t+(z (t x))
   y =(t (x z))
   y+(z (t y)) x,\cr
   (x (t z))
   y+(z (y x)) t= (t (x z))
   y+(z (t y))
   x,\cr
   (t (x y)) z+(x
   (t z)) y=(t
   (x z)) y+(z
   (t y)) x,\cr
   (t (y z))
   x+(z (t x))
   y=(t (x z))
   y+(z (t y)) x.
   \end{cases}
   \end{equation}

Identities for $A_6$ and $A_7$. Both algebras are associative. In the case of $A_6$ we have 
$A_6^3=0$ hence any product of three elements is zero. Thus, associativity holds trivially.
For $A_7$ we have $A_7\cong \K\times\K$ with product given by $(x,y)(x',y')=(xx',0)$. So, associativity is also trivial since it holds in $\K$.

Identities for $A_{8,\a}$ ($\a\ne 0$). It is not difficult to see that this algebra does not satisfy any identity of degree three.
Proceeding as in the first item we find the identities:
\begin{equation}\label{castan3}
    \begin{cases}
(x(yz))t=(t(yz))x,\cr 
(t (y z)) x+(x y) (z t)+(z (t x)) y=
(t (x y))z+(x t)(yz) +(x (t z)) y.
\end{cases}
\end{equation}

Collecting the computations in the non-perfect case and checking that any nonperfect two dimensional evolution algebras satisfies the identities in \eqref{delmono} (which is not difficult) we have the following.

\begin{proposition}
The unique non-perfect two dimensional evolution algebras satisfying a nontrivial identity of degree $3$ are $A_6$ and $A_7$, which are associative algebras. As for identities of degree $4$, the algebra $A_5$ satisfies \eqref{castan1} and \eqref{castan2}. The algebras $A_6$ and $A_7$ satisfy any identity of degree $4$ (begin associative and commutative). The algebra $A_{8,\a}$
satisfies \eqref{castan3}. Any (perfect or nonperfect) two dimensional evolution algebra satisfies the identities in \eqref{delmono}.
\end{proposition}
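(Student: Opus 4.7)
The plan is to mirror the strategy already used in the perfect case: for each of the four non-perfect isomorphism classes $A_5,A_6,A_7,A_{8,\alpha}$, fix the natural basis $\{e_1,e_2\}$ with the canonical multiplication table provided by Propositions \ref{cuboNonulo1}, \ref{Dseis} and \ref{SumaDeTres}, write a generic $3$-linear (resp.\ $4$-linear) element $w(x,y,z)$ (resp.\ $w(x,y,z,t)$) of $C_3$ (resp.\ $C_4$) with undetermined coefficients $\lambda_i$, and impose $w(e_{i_1},\ldots,e_{i_n})=0$ for every choice of indices $i_j\in\{1,2\}$. This yields a finite linear system over $\K$ whose solution space is precisely the subspace of identities. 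The resulting computations are of the same nature as those in the perfect case, so I would again carry them out with symbolic software and merely record the output.

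First I would treat degree three. For $A_6$ and $A_7$, associativity is immediate: $A_6$ satisfies $A_6^3=0$, so any triple product vanishes, and $A_7\cong \K\times\K$ with product $(x,y)(x',y')=(xx',0)$ reduces associativity to that of $\K$; hence both satisfy the full associative identity. For $A_5$ and $A_{8,\alpha}$ I would substitute $e_1^2=e_1-e_2$, $e_2^2=-e_1+e_2$ and $u^2=u$, $v^2=\alpha u$, $uv=0$ respectively into a generic trilinear $w=\lambda_1(xy)z+\lambda_2(yz)x+\lambda_3(zx)y$, and check that the only solution of the resulting $8\times3$ system is $\lambda_1=\lambda_2=\lambda_3=0$. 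This settles the first assertion.

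For degree four I would repeat the procedure with the fifteen-parameter element $w(x,y,z,t)$ displayed before \eqref{mistela}, now getting a $16\times15$ linear system for each algebra. For $A_5$ I also feed in the structural fact $(A_5^2)^2=0$, which immediately accounts for the identity \eqref{castan1}; the rest of the solution space should, after quotienting by commutativity, be spanned by \eqref{castan2}. For $A_{8,\alpha}$ the solution space produces \eqref{castan3}. For $A_6$ and $A_7$, both associative and commutative, $C_4$ collapses to a single orbit under the $S_4$-action so every $4$-linear polynomial is a multiple of an identity—there is nothing more to check. Finally, to prove the last sentence, I would verify \eqref{delmono} uniformly by substituting the generic natural basis multiplication $e_i^2=\omega_{1i}e_1+\omega_{2i}e_2$ (any $\omega_{ji}\in\K$) and observing that both sides agree as polynomials in the $\omega_{ji}$; combined with Proposition~\ref{IdenGradoCuatro} this yields the identities for every two dimensional evolution algebra.

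The main obstacle is purely bookkeeping: solving sixteen homogeneous linear equations in fifteen unknowns over an arbitrary base field $\K$ and then quotienting the solution space by the subspace of identities that become trivial once commutativity is assumed, so as to extract a genuinely minimal list like \eqref{castan2} or \eqref{castan3}. There is no conceptual difficulty—commutativity makes the action of $S_n$ on monomials transparent—and once the generic computation is set up the coefficients $\alpha,\beta$ appearing in $A_{8,\alpha}$ do not alter the identity set, so a single symbolic run per square suffices. I would therefore delegate these computations to \cite{Wolfram}, just as the authors did in the perfect case, and include only the resulting identities in the statement.
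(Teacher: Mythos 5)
Your proposal follows essentially the same route as the paper: canonical natural bases from Propositions \ref{cuboNonulo1}, \ref{Dseis} and \ref{SumaDeTres}, generic $3$- and $4$-linear elements with undetermined coefficients, the resulting linear systems solved by symbolic software, the structural shortcuts $(A_5^2)^2=0$, $A_6^3=0$ and $A_7\cong\K\times\K$, and a direct generic verification of \eqref{delmono}. One small imprecision: for $A_7$ it is not true that \emph{every} $4$-linear polynomial is an identity (e.g.\ $w(e_1,e_1,e_1,e_1)=(\sum_i\lambda_i)e_1$), only those that are trivial consequences of associativity and commutativity, i.e.\ with coefficient sum zero --- which is what the proposition's parenthetical qualification means and what your $S_4$-orbit remark amounts to once read that way.
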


\begin{remark}\rm
 In fact, it follows also from a direct computation that any commutative algebra of dimension $2$ satisfies the equations in \eqref{delmono}.
\end{remark}


\section{Automorphisms and derivations of two dimensional evolution algebras}

We study the automorphisms as an algebraic group of the two dimensional evolution algebras. This allows to obtain more information than when considering its structure as a group. 
In the forthcoming paper \cite{CGMMS} we determine the Hopf algebra representing the affine group scheme of the automorphism group.

\subsection{Automorphism of evolution algebras.}

We start by computing the automorphisms of the two dimensional evolution algebras. Taking into account  \cite[Theorem 2.2]{CSV} the procedure is as follows: we need to determine those invertible matrices $P\in {\mathcal M}_2(\K)$ such that
$$M_B=P^{-1}M_BP^{(2)} \quad \text{and}\quad  M_B(P\ast P)=M_B \begin{pmatrix}
p_{11} & p_{12} \\ p_{21} & p_{22}
\end{pmatrix} = 0.$$

Notice that for perfect evolution algebras the equation $M_B(P\ast P)=0$ implies that $p_{11} p_{12}=0$ and $p_{22}p_{21}=0$.

{\bf Automorphisms of $A_1$}. It is easy to see that there are only two.  
$$\aut(A_1)=\left\{\begin{pmatrix}
1 & 0 \\ 0 & 1
\end{pmatrix}, \quad \begin{pmatrix}
0 & 1 \\ 1 & 0
\end{pmatrix}\right\},$$
which is isomorphic to $(\Z_2, +)$. 
\medskip

{\bf Automorphisms of $A_{2, \alpha}$}. 
Denote by $S_\beta$ the set of roots of the polynomial $x^3-\beta$. In this case the system we have to solve is:

$$\begin{pmatrix}
p_{11} & p_{12} \\
p_{21} & p_{22}
\end{pmatrix}
\begin{pmatrix}
0 & \alpha \\
1 & 0
\end{pmatrix} = 
\begin{pmatrix}
0 & \alpha \\
1 & 0
\end{pmatrix}
\begin{pmatrix}
p_{11}^2 & p_{12}^2 \\
p_{21}^2 & p_{22}^2
\end{pmatrix}; \quad \alpha p_{21}p_{22}=0; \quad p_{11}p_{12}=0.
$$

From here we consider the cases $p_{11}=0$ or $p_{11}\neq 0$. In the first one we get the automorphism $\begin{pmatrix}
0 & \omega\tau \\ \omega^{-1}\tau^{-1} & 0
\end{pmatrix}$, where  $\omega \in S_1$ and $\tau \in S_\alpha$. In the second case we need to distinguish weather or not $x^3-\alpha$ has a root and, in each of these cases, we also have to consider when $x^2+x+1$ has a root or when it has not. Summarizing all the computations, that we do not include, we get that the group of automorphisms is:

$$\aut(A_{2,\alpha})=\left\{\begin{pmatrix}
0 & \omega\tau \\ \omega^{-1}\tau^{-1} & 0
\end{pmatrix} \ \vert \ \omega \in S_1, \tau \in S_\alpha\right\} 
\sqcup 
\left\{\begin{pmatrix}
\omega & 0 \\ 0 & \omega^2
\end{pmatrix} \ \vert \ \omega \in S_1\right\}.$$

A detailed description of this group, depending on $\K$ follows. Indeed, if ${\rm char}(\K)=3$, then $S_1=\{1\}$ and: 
$S_\alpha=\emptyset$ or $\vert S_\alpha\vert =1$; therefore
$$\aut(A_{2,\a})\cong \begin{cases}
 \{1\} & \text{if}\  S_{\alpha}=\emptyset, 
\\
\\
\Z_2 & \text{ otherwise.}
\end{cases}$$

If ${\rm char}(\K)\neq 3$, then $\vert S_1 \vert=1$ or 3 and $\vert S_\a\vert= 0, 1$ or 3. Taking into account that $\vert S_1\vert = 1$ implies $\vert S_\a\vert = 0$ or 1 and that $\vert S_1\vert = 3$ implies $\vert S_\a\vert = 0$ or 3, we have the four cases detailed below. 

$$\aut(A_{2,\a})\cong \begin{cases}
 \left\{1\right\} & \text{when}\ \vert S_1\vert = 1 \ \text{and}\ \vert S_\a\vert = 0,
\\
\\
\Z_2  & \text{when}\ \vert S_1\vert = 1 \ \text{and}\ \vert S_\a\vert = 1,
\\
\\
\Z_3 & \text{when}\ \vert S_1\vert = 3 \ \text{and}\ \vert S_\a\vert = 0,
\\
\\
S_3 & \text{when}\ \vert S_1\vert = 3 \ \text{and}\ \vert S_\a\vert = 3,
\end{cases}$$
where $S_3$ denotes the permutation group of three elements.

\begin{remark}
\rm
When the characteristic of $\K$ is 3 then the algebraic group of ${\rm Aut}(A_{2,\alpha})$ is zero dimensional while its Lie algebra is one dimensional (see Proposition~\ref{Casillo}). 
\end{remark}

For the remaining cases we will not include the computations. The algebraic group for the upcoming perfect algebras  is a finite constant group since the corresponding Hopf algebra is \'etale. We specify the groups of $\K$-points in each case.
\medskip

{\bf Automorphisms of $A_{3,\alpha}$ and $A_{4, \alpha}$}. After computing we have:

$$\aut(A_{3,\alpha})\cong \{1\}, \quad 
\aut(A_{4,\alpha})\cong \{1\}.$$

{\bf Automorphisms of $A_{5,\alpha, \beta}$}. In this case, we get

$$\aut(A_{5,\alpha, \beta}) \cong\begin{cases}
 \left\{\begin{pmatrix}
1 & 0 \\ 0 & 1
\end{pmatrix}, \begin{pmatrix}
0 & 1 \\ 1 & 0
\end{pmatrix} 
\right\} &  \text{if} \ \alpha=\beta,
\\
\\
\{1\} & \text{if} \ \alpha\neq \beta.
\end{cases}$$

{\bf Automorphisms of $A_{5}$}. After computing we get:

$$
\aut(A_{5}) \cong 
\begin{cases}
 \left\{
\begin{pmatrix}
a & 1-a \\ 1-a & a
\end{pmatrix} \ \vert \ a \in \K\right\} & \text{if}\ \text{char}(\K) =2,
\\
\\
\left\{\begin{pmatrix}
a & 1-a \\ 1-a & a
\end{pmatrix} \ \vert \ a \in \K \setminus{\left\{\frac{1}{2}\right\}}
\right\} & \text{if}\  \text{char}(\K) \neq 2.
\end{cases}
$$

In the first case, the group is isomorphic to $(\K, \cdot)$, where the product $\cdot$ is given by $x \cdot y= 1+x+y$. Moreover, this group is isomorphic to $(\K, +)$ via the isomorphism given by $x \mapsto 1+x$.

In the second case, the group is isomorphic  to $(\K, \cdot)$, where the product $\cdot$ is given by $x \cdot y= 2xy-x-y+1$. Moreover, this group is isomorphic to $(\K^\times, .)$, where $.$ is the product in the field $\K$. An isomorphism from this group into $(\K, \cdot)$ is given by $x \mapsto \frac{x}{2}+ \frac{1}{2}$.

Summarizing:
$$ \aut(A_5)\cong
\begin{cases}
 (\K, +) \quad \text{if}\quad \text{char}(\K) = 2, \\
 (\K^\times, .) \quad \text{if}\quad \text{char}(\K) \neq 2.
\end{cases}
$$

{\bf Automorphisms of $A_{6}$}. In this case, we get that
$$\aut(A_{6})=\left\{\begin{pmatrix}
a^2 & b\\ 0& a
\end{pmatrix} \ \vert \ a \in \K^\times, b \in \K
\right\},$$
which is isomorphic to the affine group $\text{Aff}_1(\K):=
\left\{
\begin{pmatrix}
1 & 0\\ b& a
\end{pmatrix} \quad \vert \quad a\in \K^\times, \quad b \in \K
\right\}$ via the isomorphism given by:
$$\begin{pmatrix}
a^2 & b\\ 0& a
\end{pmatrix}
\mapsto%
\begin{pmatrix}
1 & 0\\ \frac{b}{a}& a
\end{pmatrix}.$$

{\bf Automorphisms of $A_{7}$}. After computing we obtain that 
$$\aut(A_{7})=\left\{\begin{pmatrix}
1 & 0\\ 0& a
\end{pmatrix} \ \vert \ a \in \K^{\times}
\right\},$$
which is isomorphic to $(\K^\times, .)$, where the dot denotes the product in the field.

{\bf Automorphisms of $A_{8,\alpha}$}. In this case, we have that 
$$\aut(A_{8})=\left\{\begin{pmatrix}
1 & 0\\ 0& 1
\end{pmatrix}, 
\begin{pmatrix}
1 & 0\\ 0& -1
\end{pmatrix}
\right\},$$
which is isomorphic to $(\Z_2, +)$ when the characteristic of $\K$ is different from 2 and isomorphic to the trivial group when the characteristic of $\K$ is 2.

\begin{example}\label{daniel}
\rm
One question that arises naturally is if two evolution algebras in the same family which have the same automorphism group need to be isomorphic. The answer is no and one example is the following: consider the field $\K=\Q(x, y)$. Then $A_{2,x}$ and $A_{2, y}$ are not isomorphic because $\overline x \notin \{\overline y, {\overline y}^2\}$, where classes are considered in $\K^\times/(\K^{\times})^3$. However, both evolution algebras have the same automorphism group, which is the trivial one.
\end{example}

\subsection{Derivations of two dimensional evolution algebras}

We start with the study of the derivations of perfect algebras.

Let $A$ be a two dimensional evolution algebra with natural basis $\{e_1, e_2\}$ and structure matrix given by $M_B=\begin{pmatrix}
\a & \b \\ \gamma & \delta
\end{pmatrix}.$
Assume $d:A \to A$ is a derivation. Then there exist $a, b, x, y \in \K$ such that
$\begin{cases}
 d(e_1)= ae_1 + be_2,\\
 d(e_2)=xe_1 +ye_2.
\end{cases}$

Notice that a map $d$ is a derivation if and only if $d$ is a linear map which satisfies $d(e_i^2)=2d(e_i)e_i$, for $i=1, 2$, and $0=d(e_1e_2)=d(e_1)e_2+e_1d(e_2)$. These three identities give the following equations in the indeterminates $a, b, x, y$. 

$$(S) \equiv \ \begin{cases}
 (1) \quad a\a-x\beta =0,\\
 (2) \quad y\delta-b\gamma=0,\\
 (3) \quad b\gamma+x\alpha = 0, \\
 (4) \quad b\delta + x\beta =0, \\
 (5) \quad 2a\b=b\a + y \b,\\
 (6) \quad 2y\gamma = a \gamma+x\delta.\\
\end{cases}$$
Denote by $Q$ the matrix of the linear system given by equations (1) to (4), that is,

$$Q=\begin{pmatrix}
\alpha & 0 & -\beta & 0 \\
0 & \gamma & 0 & -\delta \\
0 & \gamma & \alpha & 0 \\
0 & \delta & \beta & 0
\end{pmatrix}.$$
Since $\left|\begin{matrix}
\gamma & \alpha \\
\delta & \beta 
\end{matrix}\right|=-\left| M_B\right|\neq 0$, it follows that $b=0$ and $x=0$. On the other hand, note that
$\left| Q\right|=\alpha\delta\left| M_B\right|$. Now we analyze the different cases for which this determinant is zero.

{\bf Case 1:} Assume $\alpha = 0$. 

\noindent

\noindent
Then $(5)$ and (6) gives $2a\beta = y\beta$ and $2y\gamma = a\gamma$. Applying that $\beta, \gamma$ must be nonzero (since $\vert M_B \vert \neq 0$) we obtain $y=-a$. Substituting $y$ by $-a$ in $(S)$, the system reads as follows:
$$\begin{cases}
-\delta a = 0,\\
2a\beta=y\beta, \\
-2a\gamma = a\gamma .
\end{cases}
$$
Taking into account that  $\beta, \gamma \neq 0$ we get

$$\begin{cases}
-\delta a = 0,\\
2a=y, \\
-2a = a.
\end{cases}
$$
When $\delta \neq 0$ the derivation $d$ is trivial ($d=0$). When $\delta =0$ we obtain $2a=y$ and $3a=0$. Therefore, if 
$\text{char}(\K)\neq 3$ then $d=0$ and  $\text{char}(\K)=3$ then the map $d: A \to A$ given by
$$d(re_1+se_2)= are_1-ase_2$$
is a derivation of $A$.

{\bf Case 2:} Assume $\alpha \neq 0$. 

\noindent

\noindent
If $\delta \neq 0$ then $\vert Q \vert \neq 0$, which implies $d=0$. Otherwise, taking into account that when $\delta =0$ then $\beta, \gamma\neq 0$, because $\vert M_B\vert \neq 0$, the only solution to  (S) is the trivial one. Hence $d=0$. 

We summarize the results in the following proposition.

\begin{proposition}\label{Casillo}
Let $A$ be a perfect two dimensional evolution $\K$-algebra. Then $\der(A)=0$ except when $\text{char}(\K)=3.$ In this case,  
$$\begin{cases}
 \der(A_{2,\a})=\left\{d_a:A_{2,\a} \to A_{2,\a} \ \vert \ 
d(re_1+se_2)= are_1-ase_2,\ \text{where}\ a\in \K\right\}.\\
\der(A)=0 \ \text{whenever} \ A \neq  A_{2,\a}.
\end{cases}
$$
\end{proposition}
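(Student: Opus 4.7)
The plan is to work with the general setup of a two-dimensional perfect evolution algebra $A$ equipped with a natural basis $\{e_1,e_2\}$ and structure matrix $M_B$, write a candidate derivation coordinatewise, and reduce the problem to a linear system whose behaviour can be read off the entries of $M_B$. Setting $d(e_1)=ae_1+be_2$ and $d(e_2)=xe_1+ye_2$, the three conditions $d(e_i^2)=2e_i\,d(e_i)$ for $i=1,2$ together with $d(e_1e_2)=0$ produce six scalar equations in $(a,b,x,y)$. Four of these form a homogeneous subsystem in the two indeterminates $b,x$ whose relevant $2\times 2$ submatrix has determinant $-|M_B|$, non-zero by perfectness, so $b=x=0$ independently of the algebra.

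The full $4\times 4$ coefficient matrix $Q$ of the subsystem encoding $d(e_i^2)=2e_i\,d(e_i)$ and $d(e_1e_2)=0$ satisfies $|Q|=(M_B)_{11}(M_B)_{22}|M_B|$, so whenever both diagonal entries of $M_B$ are non-zero the system forces $a=y=0$ and hence $d=0$. Using the canonical structure matrices from Lemma \ref{nerja} and the classification in Theorem \ref{AsocDiag}, the families $A_1$, $A_{3,\alpha}$ and $A_{5,\alpha,\beta}$ all have $1$'s on the diagonal, so $\der(A)=0$ for each of them over any field.

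This leaves $A_{2,\alpha}$ (whose canonical matrix has both diagonal entries zero and off-diagonal entries $\alpha$ and $1$) and $A_{4,\alpha}$ (whose canonical matrix has a single zero on the diagonal). For each I would substitute the specific entries into the two equations coming from the $e_1$-coordinate of $d(e_1^2)=2e_1\,d(e_1)$ and the $e_2$-coordinate of $d(e_2^2)=2e_2\,d(e_2)$. For $A_{4,\alpha}$ the resulting pair of equations in $a$ and $y$ is non-singular and forces $a=y=0$. For $A_{2,\alpha}$ the equations collapse to $y=2a$ and $3a=0$, so a non-trivial solution exists precisely when $\text{char}(\K)=3$; in that case $y=2a=-a$, producing the one-parameter family $d(re_1+se_2)=a(re_1-se_2)$ claimed in the statement.

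The main obstacle is purely bookkeeping: exhausting the list of perfect normal forms from Lemma \ref{nerja} to confirm that only $A_{2,\alpha}$ and $A_{4,\alpha}$ have a zero on the diagonal of the canonical structure matrix, and tracking the sign identity $2=-1$ in characteristic three. A final sanity check, automatic from linearity once the Leibniz rule has been verified on the basis, is that the candidate map for $A_{2,\alpha}$ in characteristic three does indeed extend to a derivation of the whole algebra.
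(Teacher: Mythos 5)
Your overall strategy coincides with the paper's: the same six scalar equations in $(a,b,x,y)$, the same observation that the $2\times 2$ homogeneous subsystem in $(b,x)$ has determinant $-|M_B|\neq 0$ (perfectness), hence $b=x=0$, and the same determinant identity $|Q|=(M_B)_{11}(M_B)_{22}\,|M_B|$ disposing of every family whose canonical structure matrix has no zero on the diagonal. All the conclusions you state are correct. There is, however, one step that would fail as literally written: the pair of equations you single out for the two remaining families --- the $e_1$-coordinate of $d(e_1^2)=2e_1d(e_1)$ and the $e_2$-coordinate of $d(e_2^2)=2e_2d(e_2)$ --- is the wrong pair. With $b=x=0$ those two equations reduce to $a\,(M_B)_{11}=0$ and $y\,(M_B)_{22}=0$. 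For $A_{4,\a}$ (diagonal entries $0$ and $1$) this pair is $\{0=0,\ y=0\}$: it is singular and says nothing about $a$, contrary to your claim of non-singularity. For $A_{2,\a}$ (diagonal entries $0,0$) both equations are identically $0=0$. The information you need lives in the complementary coordinates, namely the $e_2$-coordinate of $d(e_1^2)=2e_1d(e_1)$ and the $e_1$-coordinate of $d(e_2^2)=2e_2d(e_2)$ (the paper's equations (5) and (6)).

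Concretely, for $A_{4,\a}$ the nontrivial relations surviving after $b=x=0$ are $y=0$, $2a=y$ and $a=2y$; you must use $a=2y$ to kill $a$, because the route through $2a=y=0$ only yields $2a=0$, which does not force $a=0$ in characteristic $2$ (and the proposition carries no characteristic restriction). For $A_{2,\a}$ the surviving relations are precisely $y=2a$ and $a=2y$, whence $3a=0$, and your conclusion there (nontrivial derivations exactly in characteristic $3$, with $y=-a$, giving the stated one-parameter family) is correct. So the defect is a misidentification of which coordinate equations carry the content; once the correct equations are invoked the argument closes and is essentially the paper's proof.
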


\medskip

Now, we continue with the derivations of the non-perfect evolution algebras.

{\bf Derivations of $A_{5}$}.

$$\der(A_5):=
\left\{
\begin{pmatrix}
a & -a\\ -a& a
\end{pmatrix} \quad \vert \quad a \in \K
\right\},$$
which is a one  dimensional $\K$-algebra.

{\bf Derivations of $A_{6}$}.

$$\der(A_6):=
\left\{
\begin{pmatrix}
2a & b\\ 0& a
\end{pmatrix} \quad \vert \quad a, b \in \K
\right\},$$
which is a two dimensional $\K$-algebra. It is the only two dimensional non abelian Lie algebra; it
 has a basis $\{u,v\}$ with $[u,v]=u$.

{\bf Derivations of $A_{7}$}.
$$\der(A_7):=
\left\{
\begin{pmatrix}
0 & 0\\ 0& a
\end{pmatrix} \quad \vert \quad a \in \K
\right\},$$
which is a one  dimensional $\K$-algebra.

{\bf Derivations of $A_{8,\alpha}$}.

The only derivation is the trivial one.

\medskip

We summarize all the results of the section in the following table.

{\tiny \begin{center}
\begin{table}[h!]
{\renewcommand{\arraystretch}{1.3}    
\begin{tabular}{|c|ll|c|}
\hline
Algebra & Group of Automorphisms & & Derivations\cr
\hline\hline

$A_1$ & $\Z_2$ &  & 0 \cr 
\hline
$A_{2,\a}$ & $ \begin{array}{l}
\left\{\begin{pmatrix}
0 & \omega\tau \\ \omega^{-1}\tau^{-1} & 0
\end{pmatrix} \ \vert \ \omega \in S_1, \tau \in S_\alpha\right\} 
\sqcup 
\left\{\begin{pmatrix}
\omega & 0 \\ 0 & \omega^2
\end{pmatrix} \ \vert \ \omega \in S_1\right\}, \\
\text{where} \ S_\beta \text{ is the set of roots of the polynomial } x^3-\beta.
\end{array}$
&  &

$\begin{array}{cl}
\left\{\begin{pmatrix}
a & 0 \\ 0 &-a
\end{pmatrix} \ \vert \ a \in \K\right\} & \text{if}\ {\text{char}(\K)=3,}\\

0  & \text{if}\ {\text{char}(\K) \neq 3}

 \end{array}
$
\cr 
\hline
$A_{3,\a}$ & Trivial Group &  &
0
\cr 
\hline
$A_{4,\a}$ & Trivial Group  &  & 0
\cr 
\hline
$A_{5,\a,\b}$ & $\aut{(A_{5,\a,\b})}\cong\begin{cases}
 \Z_2, \quad \text{if}\quad \alpha = \beta,  \\
 \text{Trivial Group},  \quad \text{if}\quad \alpha \neq \beta. 
\end{cases}$ & &
0 \cr 
\hline
$A_{5}$ & $ 
 \begin{cases}
 (\K, +) \quad \text{if}\quad \text{char}(\K) = 2,\\
 (\K^\times, .) \quad \text{if}\quad \text{char}(\K) \neq 2 .
\end{cases}
$ &   & $\left\{
\begin{pmatrix}
a & -a\\ -a& a
\end{pmatrix} \quad \vert \quad a \in \K
\right\}$ 

\cr
\hline
$A_{6}$ & 
$\left\{
\begin{pmatrix}
1 & 0\\ b& a
\end{pmatrix} \quad a\in \K^\times, \quad b \in \K
\right\}$  & &

$\left\{
\begin{pmatrix}
2a & b\\ 0& a
\end{pmatrix} \quad \vert \quad a, b \in \K
\right\}$
\cr 
\hline
$A_{7}$ &  $(\K^\times, .)$& &
$\K$
\cr 
\hline
$A_{8,\a}$ & $ \begin{cases}
 (\Z_2, +) \quad \text{if}\quad \text{char}(\K) \neq 2, \\
 \text{Trivial Group}, \quad \text{if}\quad \text{char}(\K) = 2. 
\end{cases}
$  &  &
0
\cr 
\hline
\end{tabular}
\caption{\label{automorphisms_dev} Automorphisms and derivations of two-dimensional evolution algebras.}
}

\end{table}
\end{center}}
\bigskip

\section{Associative representations}

In this section we build faithful associative representations in $\K\times \K$ of all two dimensional evolution algebras, except for the algebras $A_{3,\a}$, $A_{4,\a}$ and $A_{5,\a,\b}$ with $\a \neq \b$, for which we show that there are no faithful associative representations in $\K\times \K$. Furthermore, we describe universal faithful associative representations of all algebras, except again for  $A_{3,\a}$, $A_{4,\a}$ and $A_{5,\a,\b}$ with $\a \neq \b$, for which we show that there are no faithful universal associative representations in a commutative algebra. Before we proceed, we recall the definition of associative representations of nonassociative algebras, as defined in \cite{KS}.

By $\K  \langle x,y,*\rangle$ we will understand the free associative non-commutative algebra with involution $*$ on free generators $x, y$.

\begin{definition}
\rm
Let $A$ be an algebra and $(B,*)$ be an associative algebra with involution $*$. A $\K$-linear map $\mu:  A \rightarrow B$ is called an \emph{associative representation} of $A$ in $(B,*)$ if there exists a bilinear polynomial $p \in \K  \langle x,y,*\rangle$ such that it holds $\mu(ab)=p(\mu(a),\mu(b))$ for all $a,b \in A$. If $\text{Ker}(\mu)=\{0\}$ then $\mu$ is called  \emph{faithful}.
\end{definition}

\begin{remark}
\rm
The definition of associative representations in \cite{KS} is done for fields whose characteristic is not 2 or 3. Therefore, for the remainder of this section, we only work with algebras over fields whose characteristic is not 2 or 3.
\end{remark}

Following \cite{KS}, given an evolution algebra $A$ and a bilinear polynomial $p(x, y)\in \K\langle x, y, \ast \rangle$ there exists an associative with involution algebra, denoted by $(\U_p(A), \ast)$, satisfying \cite[Theorem 1]{KS}. This algebra will be called the \emph{universal associative algebra} of $A$ and the polynomial $p$.
The authors in  \cite{KS} prove the existence of this algebra using tensor products (see the proof of  \cite[Theorem 1]{KS}). For evolution algebras, the universal associative algebra can be described as follows.

\begin{theorem}\label{Chicle} Let $A$ be a two dimensional evolution algebra with basis $\{e_1, e_2\}$ and structure matrix relative to this bases $(\omega_{ij})$; let $p(x, y) \in \K\langle x, y, \ast \rangle$. The universal associative algebra of $A$ and $p$ is $\K\langle x, y, \ast \rangle/I$, where $I$ is the $\ast$-ideal generated by the polynomials
$p(x, x) - \omega_{11}x-\omega_{21}y, \ p(y, y) - \omega_{12}x-\omega_{22}y, \ p(x, y), \ \text{and } p(y, x)$, and $\mu: A \to \K\langle x, y, \ast \rangle/I$ is given by $\mu(e_1)=\overline{x}, \mu(e_2)=\overline{y}$.
\end{theorem}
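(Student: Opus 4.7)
The proof proceeds by showing that $(\K\langle x, y, \ast\rangle/I, \mu)$ realizes the universal associative algebra characterized in \cite[Theorem 1]{KS}. First I would verify that $\mu$ is an associative representation of $A$ with polynomial $p$: both sides of the desired identity $\mu(ab) = p(\mu(a), \mu(b))$ are bilinear in $a, b \in A$ (the left via linearity of $\mu$ and bilinearity of the product in $A$, the right via bilinearity of $p$), so it suffices to verify it on the four pairs $(e_i, e_j)$. The four generators of $I$ are designed precisely for this: $p(x,x) - \omega_{11}x - \omega_{21}y \in I$ gives $p(\bar x, \bar x) = \omega_{11}\bar x + \omega_{21}\bar y = \mu(e_1^2)$; the second generator handles the pair $(e_2,e_2)$ analogously; and $p(x,y), p(y,x) \in I$ yield $p(\mu(e_1), \mu(e_2)) = p(\mu(e_2), \mu(e_1)) = 0 = \mu(e_1 e_2) = \mu(e_2 e_1)$.

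Next I would check the universal property. Given any associative representation $\nu: A \to (B, \ast)$ of $A$ with the same polynomial $p$, the universal property of the free $\ast$-algebra produces a unique $\ast$-homomorphism $\tilde\phi: \K\langle x, y, \ast\rangle \to B$ with $\tilde\phi(x) = \nu(e_1)$ and $\tilde\phi(y) = \nu(e_2)$. The representation identity for $\nu$ forces $\tilde\phi$ to kill each generator of $I$; for instance,
\[
\tilde\phi(p(x,x) - \omega_{11}x - \omega_{21}y) = p(\nu(e_1), \nu(e_1)) - \omega_{11}\nu(e_1) - \omega_{21}\nu(e_2) = \nu(e_1^2) - \nu(e_1^2) = 0,
\]
and the three remaining generators vanish in exactly the same way. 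Since $\tilde\phi$ is a $\ast$-homomorphism, the involutes of the generators are also sent to zero, so $\tilde\phi$ vanishes on the entire $\ast$-ideal $I$ and descends to a unique $\ast$-homomorphism $\phi: \K\langle x, y, \ast\rangle/I \to B$ with $\phi \circ \mu = \nu$. Uniqueness of $\phi$ is immediate since $\{\bar x, \bar y\} = \{\mu(e_1), \mu(e_2)\}$ generates the quotient as a $\ast$-algebra.

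The main obstacle is conceptual rather than computational: one must argue that $I$ is neither too small (so that $\mu$ genuinely defines a representation) nor too large (so that the factoring $\ast$-homomorphism exists for every other representation). Both directions come down to recognising the four chosen generators as the minimal conditions needed to encode the multiplication table of $A$ in the form $e_i e_j \mapsto p(\mu(e_i), \mu(e_j))$, which is exactly how $I$ was defined. Appealing to uniqueness of objects defined by a universal property, the construction $\K\langle x, y, \ast\rangle/I$ then must coincide with the tensor-product model of $\U_p(A)$ produced in the proof of \cite[Theorem 1]{KS}.
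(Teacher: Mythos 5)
Your proposal is correct and follows essentially the same route as the paper: verify that $\mu$ satisfies the representation identity on the basis pairs $(e_i,e_j)$ using the four generators of $I$, then obtain the factoring $\ast$-homomorphism from the universal property of $\K\langle x,y,\ast\rangle$ by checking that the generators of $I$ lie in its kernel. You even make explicit two small points the paper leaves implicit (that the involutes of the generators are also killed, and why the induced map is unique), so no issues.
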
 
\begin{proof}

We have to show that conditions (U1) and (U2) below are satisfied.

(U1) For $\mu: A \to \K\langle x, y, \ast \rangle/I$ given by $\mu(e_1)=\overline{x}, \mu(e_2)=\overline{y}$, it holds:

$$\quad \mu(ab) = p (\mu(a), \mu(b)),$$
 for any $a, b\in A$. 
\medskip

(U2) For any associative algebra  with involution $(B, \ast)$ and any linear map $\psi: A \to B$ satisfying
$$\psi(ab) = p (\psi(a), \psi(b)),$$
whenever $a, b\in A$, there exists a unique homomorphism of (associative) algebras with involution $f: \K\langle x, y, \ast \rangle/I  \to B$ such that the following diagram commutes
$$
  \begin{tikzcd}
    A \arrow{r}{\mu} \arrow[swap]{dr}{\psi} & \K\langle x, y, \ast \rangle/I \arrow{d}{f} \\
     & B
  \end{tikzcd}
$$

To prove that (U1) and (U2) are satisfied notice that $$\mu(e_1^2)= \mu (\omega_{11} e_1 + \omega_{21} e_2) = \omega_{11} \mu(e_1) + \omega_{21}\mu(e_2) = \omega_{11} \overline{x} + \omega_{21} \overline{y}=p(\overline{x},\overline{x}) = p(\mu(e_1),\mu(e_1)),$$
$$\mu(e_2^2)= \mu (\omega_{12} e_1 + \omega_{22} e_2) = \omega_{12} \mu(e_1) + \omega_{22}\mu(e_2) = \omega_{12} \overline{x} + \omega_{22} \overline{y}=p(\overline{y},\overline{y}) = p(\mu(e_2),\mu(e_2)),$$
$\mu(e_1 e_2) = 0 = p(\overline{x},\overline{y}) = p(\mu(x),\mu(y)) $ and $\mu(e_2 e_1) = 0 = p(\overline{y},\overline{x}) = p(\mu(y),\mu(x)) $.
Therefore, condition (U1) holds. 

Let $(B, \ast)$ be an associative algebra with involution, and $\psi: A \to B$ satisfying 
 $\psi(ab)= p(\psi(a), \psi(b))$ for any $a, b\in A$. By the universal property of $\K\langle x, y, \ast\rangle$ there exists a $\ast$-homomorphism of associative algebras $g: \K\langle x, y, \ast\rangle\to B$ such that $g(x)=\psi(e_1)$ and $g(y)=\psi(e_2)$. We claim that $I\subseteq \Ker (g)$. To see this, let $\mu': A \rightarrow \K\langle x, y, \ast\rangle$ be the map given by $\mu'(e_1) = x$ and $\mu'(e_2)=y$. Then

$$\begin{array}{cll}
g(p(x,x) - \omega_{11}x-\omega_{21}y)    & = & g (p(x,x)) - \omega_{11}g(x) - \omega_{21}g(y)\\
  & =   & g(p(\mu'(e_1),\mu'(e_1))) - \omega_{11}g(\mu'(e_1)) - \omega_{21}g(\mu'(e_2))\\
  & = &  p(g(\mu'(e_1)),g(\mu'(e_1))) - \omega_{11}g(\mu'(e_1)) - \omega_{21}g(\mu'(e_2)) \\
  & = & p(\psi(e_1 e_1)) - \omega_{11} \psi (e_1) - \omega_{21} \psi(e_2) \\
  & = & \psi(\omega_{11}e_1+\omega_{21} e_2) - \omega_{11} \psi(e_1) - \omega_{21}\psi(e_2) = 0
\end{array}$$

and 

$$g(xy)= g(x)g(y)= \psi(e_1)\psi(e_2)=p(\psi(e_1), \psi(e_2))=\psi(e_1e_2)=0.$$

The other conditions follow similarly. Then, the map $g$ gives rise to a $\ast$-homomorphism of associative algebras with involution $f: \K\langle x, y, \ast \rangle \to B$ such that $f \circ \mu = \psi$. Hence $(U2)$ is satisfied.

\end{proof}

The map $\mu$ defined in the theorem above is called the \emph{universal associative representation} for the algebra $A$ and the polynomial $p$. 

\begin{remark}
\rm
The theorem above can be easily adapted to an evolution algebra of any dimension. We leave this to the reader.
\end{remark}

In some cases the universal associative algebra of a two dimensional evolution algebra is also commutative. This will not be the case in general. For that reason we introduce Definition \ref{rabodetoro}. Given a finite set of variables, say $X$, by $\K[X]$ we understand the free associative commutative algebra.

\begin{definition}\label{rabodetoro}
\rm
Let $A$ be an algebra and $(B,*)$ be an associative and commutative algebra with involution $*$. A $\K$-linear map $\mu:  A \rightarrow B$ is called an \emph{associative and commutative representation of} $A$ \emph{in} $(B,*)$ if there exists a bilinear polynomial $p \in \K\ang{x, y,*}$ such that $\mu(ab)=p(\mu(a),\mu(b))$ for all $a,b \in A$. If $\text{Ker}(\mu)=\{0\}$ then $\mu$ is called  \emph{faithful}.
\end{definition}

Reasoning as in \cite{KS}, given an evolution algebra $A$ and a bilinear polynomial $p(x, y)\in \K\ang{x, y,*}$ there exists an associative and commutative algebra with involution, which we denote by $\CU_p(A)$, satisfying an analogue to \cite[Theorem 1]{KS} with commutativity conditions. This algebra will be called the \emph{universal associative and commutative algebra of} $A$ \emph{and} the polynomial $p$. 

An analogue to Theorem \ref{Chicle} is valid when considering the universal associative and commutative algebra of an evolution algebra $A$ and a polynomial $p$.

Some properties concerning universal associative representations which will of use are collected.

\begin{proposition}\label{roma}
\begin{enumerate}[\rm(i)] 
\item\label{prodd} For $i=1,2$, if $A_i$ are $\K$-algebras with universal associative representation algebras $\U_i$ for the same $p\in\K\ang{x,y,*}$, then $\U_1\times \U_2$ is the universal associative representation algebra for $A_1\times A_2$ (the multiplication in the producct algebras being componentwise). In case the representations $\mu_i\colon A_i\to \U_i$ are faithful, then the canonical representation $A_1\times A_2\to\U_1\times\U_2$ given by $(a_1, a_2)\mapsto (\mu_1(a_1), \mu_2(a_2))$ is also faithful.

\item\label{prodd2} The universal associative   representation algebra of a unital commutative associative algebra $A$, relative to the product $p(xy)=xy^*$, is itself with the identity involution.

\item\label{prodd3}  The universal associative representation algebra of the based field $\K$ endowed with the zero product,  relative to the product $p(xy)=xy^*$, is $\CU_p:=\K[x,x^*]/(xx^*)$  with the involution swapping $x$ and $x^*$. 
\end{enumerate}
\end{proposition}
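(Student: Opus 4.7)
The plan is to verify the three parts separately, taking them in increasing order of subtlety.

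For part (iii), I would define $\mu\colon\K\to \CU_p:=\K[x,x^*]/(xx^*)$ by $\mu(\lambda)=\lambda\bar x$, with involution swapping the classes $\bar x$ and $\bar x^*$. The identity $\mu(\lambda\nu)=p(\mu(\lambda),\mu(\nu))$ holds because $\K$ has the zero product, so $\mu(\lambda\nu)=0$, while $\lambda\nu\,\bar x\bar x^{*}=0$ in the quotient. For the universal property, given a commutative associative $(B,*)$ and a linear $\psi\colon\K\to B$ with $\psi(\lambda\nu)=\psi(\lambda)\psi(\nu)^{*}$, the element $b:=\psi(1)$ satisfies $bb^{*}=\psi(1\cdot 1)=\psi(0)=0$; the universal property of the free commutative $*$-algebra $\K[x,x^{*}]$ then extends $\bar x\mapsto b$, $\bar x^{*}\mapsto b^{*}$ to a $*$-homomorphism which factors through $\CU_p$ and yields the required $f$, unique because $\mu(\K)$ generates $\CU_p$ as a $*$-algebra.

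For part (ii), I would take $\mu=\mathrm{id}_A\colon A\to A$ with $A$ equipped with the identity involution. Then $\mu(ab)=ab=a\cdot b^{*}=\mu(a)\mu(b)^{*}$, so $\mu$ is an associative representation. To establish the universal property, given $(B,*_B)$ associative with involution and a linear $\psi\colon A\to B$ satisfying $\psi(ab)=\psi(a)\psi(b)^{*_B}$, it suffices to show $\psi$ is automatically a $*$-homomorphism, for then $f:=\psi$ is the unique factorization. The key computation exploits unitality of $A$: setting $b=1$ gives $\psi(a)=\psi(a)\psi(1)^{*_B}$, while setting $a=1$ and applying $*_B$ gives $\psi(a)^{*_B}=\psi(a)\psi(1)^{*_B}$; combining these produces $\psi(a)^{*_B}=\psi(a)$ for every $a\in A$. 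The bilinear relation then collapses to $\psi(ab)=\psi(a)\psi(b)$, so $\psi$ is a $*$-algebra homomorphism.

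For part (i), the natural candidate is $\mu=\mu_1\times\mu_2\colon A_1\times A_2\to \U_1\times\U_2$, where the product algebra is endowed with componentwise multiplication and componentwise involution. Since $p$ is bilinear and both operations on the product algebra split into components, the identity $\mu((a_1,a_2)(b_1,b_2))=p(\mu(a_1,a_2),\mu(b_1,b_2))$ reduces to the individual identities for $\mu_1$ and $\mu_2$. For the universal property, given $\psi\colon A_1\times A_2\to (B,*)$ satisfying the bilinear relation, I would restrict via the canonical inclusions $\iota_i\colon A_i\hookrightarrow A_1\times A_2$ to obtain $\psi_i:=\psi\circ\iota_i$, each satisfying the bilinear relation (since $\iota_i$ is an algebra homomorphism), and hence factoring as $\psi_i=f_i\circ\mu_i$ for unique $*$-homomorphisms $f_i\colon\U_i\to B$; the candidate is $f(u_1,u_2):=f_1(u_1)+f_2(u_2)$. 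The main obstacle is multiplicativity of $f$, since expanding $f((u_1,u_2)(v_1,v_2))-f(u_1,u_2)f(v_1,v_2)$ leaves the cross terms $f_1(u_1)f_2(v_2)+f_2(u_2)f_1(v_1)$ in $B$; their vanishing I would deduce from the identities $p(\psi_1(a_1),\psi_2(a_2))=0=p(\psi_2(a_2),\psi_1(a_1))$ (which come from applying $\psi$ to the zero cross products in $A_1\times A_2$) together with the fact that each $\U_i$ is generated as a $*$-algebra by the image of $\mu_i$. Finally, faithfulness transfers componentwise: if $\mu(a_1,a_2)=0$ then each $\mu_i(a_i)=0$, whence $a_i=0$ by faithfulness of $\mu_i$.
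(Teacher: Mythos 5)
Your treatments of parts (ii) and (iii) are correct and are essentially the paper's own: the same representations ($\mathrm{id}_A$, respectively $\lambda\mapsto\lambda\bar x$), the same use of unitality to derive $\psi(a)^{*}=\psi(a)$ and collapse the deformed product to the ordinary one, and the same verification of the universal property in (iii).

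Part (i) is another matter. The paper's entire proof of it is the assertion that it ``follows immediately'', so there is nothing to compare you against; but the step you yourself single out as the main obstacle --- the vanishing of the cross terms $f_1(u_1)f_2(v_2)$ --- is a genuine gap, and the deduction you propose does not close it. From $\psi\bigl((a_1,0)(0,a_2)\bigr)=0$ you obtain only $p(\psi_1(a_1),\psi_2(a_2))=0$, which for a general bilinear $p$ is a single linear combination of the various products $\psi_1(a_1)^{(\ast)}\psi_2(a_2)^{(\ast)}$ and their reverses; it does not force the one product $\psi_1(a_1)\psi_2(a_2)$ to vanish, and generation of $\U_i$ by $\mu_i(A_i)$ does not help. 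In fact the first sentence of (i) fails as stated: take $p(x,y)=xy^{*}$ and $A_1=A_2=\K$ with zero product, so each $\U_i\cong\K\ang{x,\ast}/(xx^{*})$ by part (iii), and let $B=\K\ang{x,y,\ast}/(xx^{*},yy^{*},xy^{*},yx^{*})$ be the universal algebra of $A_1\times A_2$ given by Theorem \ref{Chicle}, with $\psi$ its canonical representation. Every element of that ideal is a combination of words containing a starred letter, so $\overline{xy}=\psi_1(1)\psi_2(1)\neq 0$ in $B$; hence no multiplicative $f$ with $f\circ\mu=\psi$ can exist, and $\U_1\times\U_2$ does not satisfy (U2). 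In general one only gets a canonical surjection from $\U_p(A_1\times A_2)$ onto $\U_1\times\U_2$; equality does hold in the instances the paper actually uses (Corollary \ref{prodd4}), because there the relations force $x=x^{*}$ and therefore kill the cross products inside the universal algebra. The second sentence of (i), on faithfulness, is unaffected, and your componentwise argument for it is correct; together with \cite[Theorem 2]{KS} that is what the rest of the paper needs.
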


\begin{proof}
\eqref{prodd} follows immediately.

\eqref{prodd2}
We use as faithful representation the identity $1\colon A\to A$. If $(B,*)$ is an algebra with involution and 
$\varphi\colon A\to B$ satisfies
$\varphi(ab)=p(\varphi(a),\varphi(b))=\varphi(a)\varphi(b)^*$, then there is a unique $*$-homomorphism of associative algebras $F\colon A\to B$ such that $F1=\varphi$. In fact, we must take $F=\varphi$ which is a homomorphism of associative algebras because 
$\varphi(a)=\varphi(a)\varphi(1)^*=\varphi(1)\varphi(a)^*$ for any $a$. This implies that $\varphi(a)=\varphi(a)^*$ so that $\varphi$ is a homomorphism of associative algebras. But also $F(a^*)=F(a)=\varphi(a)=\varphi(a)^*=F(a)^*$. With the same proof we get that also $1\colon A\to A$ is the universal associative and commutative representation relative to the product $xy^*$.

\eqref{prodd3} Consider $\mu\colon\K\to\CU_p$ given by $\lambda\mapsto \lambda \bar x$. Then $\mu(ab)=\mu(0)=\bar 0$ and $\mu(a)\mu(b)^*=ab \bar x\bar{x^*}=\bar 0$. Now, if $(B,*)$ is a commutative $\K$-algebra with involution and $\varphi\colon\K\to B$ satisfies $\varphi(a)\varphi(b)^*=0$ for any $a,b\in\K$, there is a unique  $*$-homomorphism of associative algebras
$F\colon\CU_p\to B$ such that $F\mu=\varphi$: the one
induced by $F(\bar x)=\varphi(1)$ and $F(\bar x^*)=\varphi(1)^*$.
\end{proof}

\begin{corollary}\label{prodd4}  
\begin{enumerate} [\rm(i)]
\item The universal associative and commutative representation of the decomposable two dimensional evolution algebra $A_1$  relative to the product $p(x, y)=xy^\ast$ is $A_1$, which is isomorphic to $\K \times \K$ with product given by componentwise, being the involution and $\mu $ the identity map.
\item The universal associative and commutative representation of the decomposable two dimensional evolution algebra $A_7$, relative to the product $p(x, y)=xy^\ast$, is $\K\times \frac{\K[x,x^*]}{(xx^*)}$ with
$\mu(a, b)= (a, b\overline x)$, being the  involution $(a, b)^\ast=(a, b^\ast)$ and the involution in  $\K[x,x^*]/(xx^*)$ is induced by $x \mapsto x^\ast$.
\end{enumerate}
\end{corollary}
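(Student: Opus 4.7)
The plan is to deduce both parts of the corollary directly from the three items of Proposition~\ref{roma}, after identifying the relevant algebraic structure of each of the two evolution algebras.

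For (i), I would first observe that, with the natural basis $\{e_1,e_2\}$ coming from Lemma~\ref{nerja}, the structure matrix of $A_1$ being the identity forces $e_1^2=e_1$, $e_2^2=e_2$ and $e_1e_2=0$. Thus $e_1,e_2$ are orthogonal idempotents whose sum is a unit, so $A_1$ is a unital commutative associative $\K$-algebra, isomorphic to $\K\times\K$ with the componentwise product. Proposition~\ref{roma}\eqref{prodd2}, whose closing sentence explicitly extends the conclusion from the universal associative to the universal associative and commutative representation of a unital commutative associative algebra, then identifies the universal object with $1_{A_1}\colon A_1\to A_1$ equipped with the identity involution.

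For (ii), the first step is to decompose $A_7$ as an algebra direct product. The natural basis $\{u,v\}$ produced in the analysis of Case~3.1 satisfies $u^2=u$, $v^2=0$ and $uv=0$, so $A_7=\K u\times \K v$, where $\K u$ is a unital copy of $\K$ with the ordinary product and $\K v$ is a copy of $\K$ endowed with the zero product. Next I would invoke Proposition~\ref{roma}\eqref{prodd}, which transfers verbatim to the commutative setting since a product of commutative algebras is commutative and the universal property restricts cleanly to commutative targets; this reduces the computation of the universal associative and commutative representation of $A_7$ to that of its two factors. Proposition~\ref{roma}\eqref{prodd2} supplies $\K$ with the identity involution for the first factor, and Proposition~\ref{roma}\eqref{prodd3} supplies $\K[x,x^*]/(xx^*)$ with the swap involution for the second. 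Assembling these components yields the algebra $\K\times \K[x,x^*]/(xx^*)$ with componentwise involution and canonical representation $\mu(a,b)=(a,b\overline{x})$; faithfulness follows from the faithfulness clause of Proposition~\ref{roma}\eqref{prodd} together with the faithfulness of each of the two factor representations.

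The only point that requires any care is verifying that Proposition~\ref{roma}\eqref{prodd} indeed transfers from the associative to the associative-commutative setting; once that is in hand, the argument is just an assembly of the components provided by \eqref{prodd2} and \eqref{prodd3}, and no further calculation is needed.
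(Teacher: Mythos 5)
Your derivation is correct and is exactly the intended one: the paper states Corollary~\ref{prodd4} without an explicit proof, and it is meant to follow from Proposition~\ref{roma} precisely as you argue --- part (i) by recognizing $A_1\cong\K\times\K$ as a unital commutative associative algebra and applying item \eqref{prodd2} (whose proof in the paper already covers the commutative version), and part (ii) by splitting $A_7=\K u\times\K v$ into a unital factor and a zero-product factor and assembling items \eqref{prodd}, \eqref{prodd2} and \eqref{prodd3}. You also correctly isolate the one point the paper glosses over, namely that the product statement \eqref{prodd} carries over to commutative targets.
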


\subsection{Faithful associative representations}

We are interested in finding faithful universal representations of two dimensional evolution algebras. By means of the universal property, i.e., (U2) in Theorem \ref{Chicle} and using \cite[Theorem 2]{KS}, the
faithfulness of the universal representations can be obtained from the existence of a concrete faithful representation. In some cases, we will find particular faithful  representations in $\K \times \K$ or $M_2(\K)$.

For the faithful representations of $A_{5,\a,\b}$ with $\a \neq \b$, we will take into account the following. 

\begin{remark}\label{dino}\rm
It is well-known that any two dimensional associative $\K$-algebra $A$ with involution is isomorphic to one of the following:
\begin{enumerate}
    \item The zero product algebra.
    \item $\K\times\K$ with product $(x,y)(z,t)=(xz,0)$.
    \item $\K\times\K$ with product $(x,y)(z,t)=(0,xz)$.
    \item $\K\times\K$ with product $(x,y)(z,t)=(xz,yt)$.
    \item $\K(\epsilon)$, the dual numbers algebra, with basis $\{1,\epsilon\}$ such that $\epsilon^2=0$.
    \item A quadratic field extension $\K[x]/(p)$ where $p\in \K[x]$ is an irreducible polynomial over $\K$.
\end{enumerate}
The only possible involutions for the algebra (2) are the identity or $(x,y)^*=(x,-y)$. The only involution for the algebra (3) is the identity. For the algebra (4) the involutions are the identity and the exchange. The dual numbers algebra  has two involutions: the identity and the one given by 
$\epsilon^*=-\epsilon$ when the characteristic of $\K$ is different from 2 and the identity otherwise. Finally, the involutions in a quadratic field extension $\K[x]/(x^2+b x+c)$ are: the identity and the one satisfying $\bar x^*=-\bar x-b\bar 1$. 
\end{remark}

{\bf Faithful associative representations of $A_{1}$.}

The universal associative and commutative representation of $A_1$ relative to the product $p(x,y)=xy^*$ is given in Proposition \ref{roma} (\ref{prodd2}). To illustrate that the universal representation depends on the polynomial $p$ we present below another faithful universal representation of $A_1$ for a different $p$.

Let $B=\{e_1, e_2\}$ be a natural basis of $A_{1}$ with product given by $e_1^2=e_1, e_2^2=e_2$.
Consider the bilinear polynomial $p(x, y) = xy$ and let $I$ be the $\ast$-ideal of $\K\langle x, y, \ast \rangle$ generated by the polynomials in the set $\{x^2-x, y^2-y, xy, yx\}$.

The representation $\mu$ is faithful because $A_1$ is an associative algebra and hence the identity map $id: A_1 \to A_1$ is a faithful associative representation for the polynomial $p$. 

Finally we note that the universal algebra $\K\langle x, y, \ast\rangle/I$ is infinite dimensional as the set 
$$\{\overline{x}, \overline{x^\ast}, \overline{y}, \overline{y^\ast}, \overline{xx^\ast}, \overline{xy^\ast}, \overline{yx^\ast}, \overline{yy^\ast}, \overline{x^\ast x}, \overline{x^\ast y}, \overline{y^\ast x}, \overline{y^\ast y}, \overline{xx^\ast x}, \overline{yy^\ast y}\dots\}$$
consists of linearly independent elements.
\medskip

{\bf Associative faithful representations of $A_{2,\a}$}.

An  easy observation is that the algebra $A_{2,1}$ has a faithful associative representation $\psi \colon A_{2,1}\to {\mathcal M}_2(\K)$, where the involution in the matrix algebra is 
$$\begin{pmatrix} a & b\cr c & d\end{pmatrix}^*:=\begin{pmatrix} d & -b\cr -c & a\end{pmatrix}$$ and the product is by
$p(x, y)=x^*y^*$. The representation is given by $\psi(e_i)=E_{ii}$ (where $E_{ii}$ denotes the matrix having $1$ in place $ii$ and zero elsewhere) for $i=1,2$. 

An even simpler associative representation of $A_{2,1}$ is $\K\times\K$ with (associative) product given by componentwise multiplication and exchange involution, say $*$; in this case $p(x, y)=x^*y^*$.

Now, for the algebra $A_{2,\a}$, if 
$\root 3 \of\a\in \K$ there exists an isomorphism
$A_{2,1}\cong A_{2,\alpha}$. Indeed if $\{e_1,e_2\}$ is a natural basis of $A_{2,1}$ satisfying $e_1^2=e_2$ and $e_2^2=e_1$, and $\{f_1,f_2\}$ is a natural basis of $A_{2,\a}$ with $f_1^2=f_2$ and $f_2^2=\a f_1$, then one isomorphism is determined by 
$$\begin{matrix}
e_1\mapsto \root 3 \of {\frac{1}{\a}}f_1,\cr e_2\mapsto \root 3 \of {\frac{1}{\a^2}}f_2.
\end{matrix}$$
In case $\root 3\of{\a}\notin\K$ we have a similar representation in ${\mathcal M}_2(\K(\root 3\of\a))$ and also the simpler associative representation algebra in 
$\K(\root 3\of\a)\times \K(\root 3\of\a)$ with exchange involution and product as before. This guarantees that the universal associative representation $\mu$ is faithful.

Next we characterize the universal algebra $\K\langle x, y, \ast\rangle/I$. Recall that for $A_{2,1}$ the ideal $I$ is the $\ast$-ideal of $\K \langle x, y, \ast \rangle$ generated by the polynomials ${x^*}^2-y, {y^*}^2-x, x^*y^*, \text{ and } y^*x^*$.

Notice that $\K\langle x, y, \ast\rangle/I$ is isomorphic to $\U_p:= \K[ x, y ] / J$, where $J$ is the ideal generated by the polynomials $y^4-y, x^4-x, xy, \text{ and } yx$. An isomorphism is given by the map $\phi:\K\langle x, y, \ast\rangle/I \rightarrow \U_p $ such that $\phi(\overline{x})=\overline{x}$, $\phi(\overline{y})=\overline{y}$, $\phi(\overline{x^*})=\overline{y}^2$, and $\phi(\overline{y^*})=\overline{x}^2$. Now, $$\U_p = \text{span} \{\overline{x},\overline{x}^2, \overline{x}^3,\overline{y}, \overline{y}^2, \overline{y}^3\}.$$
We prove that $ \{\overline{x},\overline{x}^2, \overline{x}^3,\overline{y}, \overline{y}^2, \overline{y}^3\}$ is a linearly independent set. It is enough to check that $ \{\overline{x},\overline{x}^2, \overline{x}^3 \}$ and $\{\overline{y}, \overline{y}^2, \overline{y}^3\}$ are linearly independent sets. Suppose that $\alpha_1 \overline{x} + \alpha_2 \overline{x}^2 +\alpha_3 \overline{x}^3 = 0$. Then $\alpha_1 x + \alpha_2 x^2 +\alpha_3 x^3$ belongs to the ideal $J$ and hence it must be in the ideal generated by $x^4-x$. By a degree argument we see that $\alpha_1=\alpha_2=\alpha_3=0$ as desired. The proof to see that $\{\overline{y}, \overline{y}^2, \overline{y}^3\}$ is a linearly independent set is similar. Therefore, $$\U_p = \text{span} \{\overline{x},\overline{x}^2, \overline{x}^3\}\oplus \text{span} \{\overline{y}, \overline{y}^2, \overline{y}^3\} = \frac{x \K[ x ]}{ \langle x^4 -x\rangle } \oplus \frac{ y \K [y]}{\ang{y^4 -y}} .$$

As a final remark, notice that $u=\frac{1}{3} (\overline{x} + \overline {x}^2+\overline{x}^3)$ is an idempotent in $J_1:=\displaystyle \frac{x \K[ x ]}{\ang{x^4 -x}}$. Also, the element $w=\frac{1}{3} (\overline{y} + \overline {y}^2+\overline{y}^3)$  is an idempotent in $J_2:=\displaystyle \frac{y \K[ y ]}{\ang{y^4 -y}}$. Furthermore, the quotient of  $J_1$ by the ideal $J_1'$ generated by $\overline{x}-\overline{x}^3$ and $\overline{x}^2-\overline{x}^3$ is isomorphic to $\K u$ and the quotient of  $J_2$ by the ideal $J_2'$ generated by $\overline{y}-\overline{y}^3$ and $\overline{y}^2-\overline{y}^3$ is isomorphic to $\K w$. Therefore, the faithful representation of $A_{2,1}$ in $\K \times \K$ which we described above can be obtained as the quotient $\U_p/ (J_1' \oplus J_2')$.

\medskip

{\bf Associative faithful representations of $A_{3,\a}$}.

There is no faithful commutative universal representation. See the Appendix for a proof using the software Magma \cite{Magma}.

\medskip

{\bf Associative faithful representations of $A_{4,\a}$}.

There is no faithful commutative universal representation. For a proof of this fact  using the software Magma \cite{Magma}, see the Appendix.

{\bf Associative faithful representations of $A_{5,\a,\b}$, with $0 \neq \a\b\neq 1$}.
\medskip

{\bf Case 1: $\a = \b.$} 

For the algebra $A_{5,\a,\a}$ we find a faithful associative representation $\psi\colon A_{5,\a,\a}\to \K \times \K$, with (associative) product given by $(a,b)(c,d)=(ac,bd)$ and the exchange involution $(a,b)^*=(b,a)$. The nonassociative product is $p(x,y)=xy+\a x^*y^*$. The representation is given by  $$\psi(e_1)=(0,1)  \quad \textrm{and} \quad \psi(e_2)=(1,0).$$

Next we characterize the universal algebra $\K\langle x, y, \ast\rangle/I$ for $p$ the polynomial given above. Recall that the ideal $I$ is the $\ast$-ideal of $\K \langle x, y, \ast \rangle$ generated by the polynomials $x^2+\a {x^*}^2-x-\a y, y^2+\a {y^*}^2- \a x -y, x y+\a x^*y^* \text{ and } yx +\a y^*x^*$. 
 Notice that  $$(xy,x^*y^*) \begin{pmatrix}
 1 & \a \\ \a & 1
 \end{pmatrix} \in I\times I,$$
 hence $xy, x^*y^*,y^*x^* \text{ and } yx$ belong to $I$. 
 Since ${x^*}^2\equiv \a^{-1}(x+\a y-x^2)$ (modulo\ $I$) and $x$ and $y$ commute with $x$ and $y$, we conclude that ${x^*}^2$ commutes with $x$ and $y$ (modulo $I$). Furthermore, it is easy to see that ${x^*}^2$ commutes  with $x^*$ and with $y^*$. The conclusion is that ${x^*}^2$ is central (modulo $I$). In a similar way we see that ${y^*}^2$ is central (modulo $I$), so $x^2$ and $y^2$ are central and therefore $x+\a y$ and $\a x+y$ are central, what implies that $x$ and $y$ are central (modulo $I$). Thus $\U_p/I$ is a commutative algebra. A Gr\"{o}ebner basis found with Singular \cite{Singular} is 
 $\{y^2-y, x^2-x, y-x^*, x - y^*, xy, yx \}$. 
Therefore $\K\langle x, y, \ast\rangle/I$ is isomorphic to $\U_p:= \K[ x, y ] / J$, where $J$ is the ideal generated by the polynomials $y^2-y, x^2-x, xy, yx$, and the isomorphism is given by the map $\phi:\K\langle x, y, \ast\rangle/I \rightarrow \U_p $ such that $\phi(\overline{x})=\overline{x}$, $\phi(\overline{y})=\overline{y}$, $\phi(\overline{x^*})=\overline{y}$, and $\phi(\overline{y^*})=\overline{x}$. Hence, $\U_p \cong \K \oplus \K$, where the involution in $\K\oplus \K$ is the exchange evolution. Notice that we have shown that the universal representation of $A_{5,\a,\a}$ coincides with the representation $\psi$ given above.
\medskip

{\bf Case 2: $\a\neq \b.$}

There is no faithful commutative universal representation. See the Appendix for a proof using the software Magma \cite{Magma}.

 For $A_{5,\a,\b}$, with $\a\neq \b$, we prove with the aid of the computer system  Mathematica \cite{Wolfram}, that there are no faithful associative representations of $A_{5,\a,\b}$ into $\K \times \K$. To see this we consider all possible associative multiplications and involutions in $\K \times \K$ and all possible non associative products (given by a bilinear polynomial $p(x,y)$). The bilinear polynomial $p(x,y)$ is given by $$p(x,y)= \lambda_1 xy + \lambda_2 yx + \lambda_3 x y^* + \lambda_4 y^*x + \lambda_5 x^*y +\lambda_6 yx^* + \lambda_7 y^*x^* + \lambda_8 x^*y^*,$$
 which has eight parameters.
 To be able to use Mathematica \cite{Wolfram} we need to reduce the algorithmic complexity of our problem. For this, notice that if $p(x,y)$ gives rise to an associative representation then so does $q(x,y)=p(y,x)$. Furthermore, any convex combination of $p$ and $q$ also gives rise to an associative representation of the algebra. Therefore, using appropriate convex combinations, we can reduce the possible products to an expression with only four parameters, that is, we can consider the case that 
 $$p(x,y) = \lambda_1(xy+yx) + \lambda_2(xy^*+yx^*)+\lambda_3(y^*x+x^*y)+\lambda_4(x^*y^*+y^*x^*).$$
 If, furthermore, the product in $\K \times \K$ is commutative, then we can further reduce $p$ to only three parameters, that is, we only need to consider 
 $$p(x,y)= \lambda_1xy + \lambda_2(x^*y + xy^*) + \lambda_4 x^* y^*.$$
 
 If there is a $p$ as above and there exists a faithful representation $\psi: A_{5,\a,\b}\rightarrow \K \times \K$, then the following equations must be satisfied:

$$\begin{array}{lll}
  p(\psi(e_1),\psi(e_2)) & = & p(\psi(e_2),\psi(e_1))  =  0,      \\
 p(\psi(e_1),\psi(e_1)) & = & \psi (e_1) + \beta \psi(e_2),  \\
 p(\psi(e_2),\psi(e_2)) & = & \a\psi (e_1)+ \psi(e_2).    
\end{array}$$

Computing a Gr\"{o}bner basis using Mathematica \cite{Wolfram}, we have checked that the above equations give rise to an inconsistent system for all possible $p$ and all possible associative multiplications and involutions in $\K \times \K$ (described in Remark~\ref{dino}).

\begin{remark}\rm
 We have verified, using various algebraic computer systems, that there is no faithful universal representations of $A_{3,\a}$, $A_{4,\a}$, $A_{5,\a,\b}$ with $\a\neq \b$ and $p(x,y) = \lambda_1(xy+yx) + \lambda_2(xy^*+yx^*)+\lambda_3(y^*x+x^*y)+\lambda_4(x^*y^*+y^*x^*)$, with one of the coeficients $\lambda_i$ equal to zero. We have also checked that there is no universal faithful representation of these algebras when considering specific non-zero values of the parameters $\lambda_i$. Unfortunately, with all the parameters $\lambda_i$ free, the algebraic computer systems we used were not able to provide a definitive answer. 
\end{remark}

 \medskip
{\bf Associative faithful representations of $A_{5}$}.

For the algebra $A_{5}$ we find a faithful associative representation $\psi\colon A_{5}\to {\mathcal M}_2(\K)$, where the involution $T$ in the matrix algebra is the transposition of matrices,  and for  the product 
$p(x,y):=-xy-yx+x^Ty^T+y^Tx^T$. 
The representation is given by: $$\psi(e_1)=\begin{pmatrix} -1/4 & t\cr -t & -1/4\end{pmatrix} \quad \textrm{and} \quad \psi(e_2)=\begin{pmatrix} -1/4 & -t\cr t & -1/4\end{pmatrix} ,$$ 
where $t \in \K^\times$. 

A simpler faithful representation of this algebra is $\K\times\K$ with (associative) product $(x,y)(z,t)=(xz-yt, xt+yz)$ and involution $(x,y)^*=(x,-y)$.
The faithful representation is $\psi\colon A_{5}\to \K\times\K$ with 
$\psi(e_1)=(-1/4,t)$, $\psi(e_2)=(-1/4,-t)$. The nonassociative product
in this algebra is $(x,y)\cdot (z,t)=(0,-4(xt+yz))$.

For $p(x,y)=-2xy+2x^*y^*$ the universal commutative and associative algebra is
${\CU_p}:=\K[x,y,x^*,y^*] /J$, where $J$ is the $\ast$-ideal of $\K[ x,y,x^*,y^*]$ generated by the  polynomials $-2x^2+{2x^*}^2-x+y$, $-2y^2+{2y^*}^2+x-y$, and $-2xy+2x^* y^*$. Computing a Gr\"{o}bner basis we deduce that $y \equiv 2x^2+x-{2x^*}^2$ (modulo $J$) and $y^* \equiv x-y+x^*$ (modulo $J$). Therefore $${\CU_p} \cong \frac{\K[ x,x^*]}{\langle 2 x^3+2 x^2 x^*+x^2-2 x
   {x^*}^2-2 {x^*}^3-{x^*}^2\rangle}$$ endowed
   with the involution induced by $x\mapsto x^*$. The canonical monomorphism $\mu\colon A_5\to\CU_p$
   is such that $\mu(e_1)=\overline{x}$ and $\mu(e_2)= 2\overline{x}^2 + \overline{x} - 2 {\overline{x}^*}^2$.
   Observe that 
   $$2 x^3+2 x^2 x^*+x^2-2 x
   {x^*}^2-2 {x^*}^3-{x^*}^2=(x+x^*)(x-x^*)(2x+2x^*+1),$$
   hence by the Chinese Remainder Theorem $$\CU_p\cong \frac{\K[ x,x^*]}{\langle x+x^*\rangle}
   \times \frac{\K[ x,x^*]}{\langle x-x^*\rangle}
   \times \frac{\K[ x,x^*]}{\langle 2x+2x^*+1 \rangle}\cong\K[ x]\times\K[x]\times\K[x].$$

\medskip

{\bf Associative faithful representations of $A_{6}$}.

The algebra $A_{6}$ has a faithful associative representation $\psi\colon A_{6}\to {\mathcal M}_2(\K)$ for the involution in the matrix algebra given by the transposition $T$ of matrices and for the product 
$p(x,y):=-xy-yx+xy^T+y^Tx+yx^T+x^Ty-x^Ty^T-y^Tx^T$. The representation is given by: $$\psi(e_1)=\begin{pmatrix} 8t^2 & 0\cr 0 & 8t^2\end{pmatrix} \quad \textrm{and} \quad \psi(e_2)=\begin{pmatrix} z & t\cr -t & z\end{pmatrix},$$ 
where $t\in \K^\times$, $z\in\K$.

In this case, a simpler faithful associative representation is $\K\times\K$ again with (associative) product $(x,y)(z,t)=(xz-yt, xt+yz)$ and involution $(x,y)^*=(x,-y)$.
The  faithful representation is $\psi\colon A_{6}\to \K\times\K$ with 
$\psi(e_1)=(8t^2,0)$, $\psi(e_2)=(z,t)$. The nonassociative product
in this algebra is $(x,y)\cdot (z,t)=(8yt,0)$.

The universal commutative and associative algebra is ${\CU_p}:=\K[x,y,x^*,y^*] /J$, where $J$ is the $\ast$-ideal of $\K[ x,y,x^*,y^*]$ generated by the  polynomials 
$-2x^2+4xx^* - {2x^*}^2$, $-2y^2+4yy^* - {2y^*}^2-x$, and $ -2 x^*y^*+2yx^*+xy^*+2y^*x-2xy$.
Computing a Gr\"{o}bner basis we deduce that $x \equiv x^*$ (modulo $J$) and $x \equiv -2(y^*-y)^2$ (modulo $J$). Therefore ${\CU_p} \cong \K[ y,y^*]$ endowed
with the involution induced by $y\mapsto y^*$. The canonical monomorphism $\mu\colon A_6\to\CU_p$
   is such that $\mu(e_1)=-2(\overline{y}^*-\overline{y})^2$ and $\mu(e_2)= \overline{y}$.

\medskip

{\bf Associative faithful representations of $A_{7}$}.

The universal associative and commutative representation of this algebra for the product $p(x,y)=xy^*$ is given in Proposition \ref{roma} (\ref{prodd3}). 
Also the algebra $A_{7}$ has a faithful associative representation $\psi\colon A_{7}\to {\mathcal M}_2(\K)$, where the involution in the matrix algebra is once more the transposition $T$ of matrices,  and the product 
$p(x,y):=xy+yx+xy^T+y^Tx+yx^T+x^Ty+x^Ty^T+y^Tx^T$. The representation is given by: $$\psi(e_1)=\begin{pmatrix} 1/8 & 0\cr 0 & 1/8\end{pmatrix} \quad \textrm{and} \quad \psi(e_2)=\begin{pmatrix} 0 & t\cr -t & 0\end{pmatrix},$$ 
where $t \in \K^\times$.
In this case, a simpler faithful associative representation is $\K\times\K$  with (associative) product $(x,y)(z,t)=(xz-yt, xt+yz)$ and involution $(x,y)^*=(x,-y)$.
The faithful representation is $\psi\colon A_{7}\to \K\times\K$ with 
$\psi(e_1)=(1/8,0)$, $\psi(e_2)=(0,t)$. The nonassociative product
in this algebra is $(x,y)\cdot (z,t)=(8xz,0)$.

The universal commutative, associative algebra is ${\CU_p}:=\K[x,y,x^*,y^*] /J$, where $J$ is the $\ast$-ideal of $\K[ x,y,x^*,y^*]$ generated by the  polynomials $2x^2+4xx^*+{2x^*}^2-x$, $2y^2+4yy^*+{2y^*}^2$, \text{ and } $2xy+2xy^*+2yx^*+2x^*y^*$.

A Gr\"{o}bner basis for the ideal $J$ is $\left\{{y^*}^2+2 y^* y+y^2,y^* x^*+y x^*,8
   {x^*}^2-x^*,x-x^*\right\}$. 
Eliminating $x^*$ we get $ 
\left\{(y^*+y)^2,x (y^*+y),x (8
   x-1)\right\}$. Hence, defining $\omega=y^\ast + y$
 $$\frac{\K[x,y,y^*]}{\langle (y^*+y)^2, x (y^*+y), x(8x-1)) \rangle}\cong \frac{\K[x,w,y^*-y]}{\langle w^2, x w, x(8x-1)) \rangle}\cong  \left(\frac{\K[x,w]}{\langle w^2, x w, x(8x-1) \rangle}\right) [y^*-y].$$ 
 Analyzing the multiplication table of $\K[x,w]/ \langle w^2, x w, x(8x-1)\rangle$ we notice that it is isomorphic to the evolution algebra $A_7$. So, denoting $y^*-y$ by $q$, we obtain that $$\CU_p \cong A_7[q],$$ where the involution in $A_7$ is the identity and $q^*=-q$.
 The canonical monomorphism $\mu\colon A_7\to\CU_p$
   is given by $\mu(e_1)=\overline{x}$ and $\mu(e_2)= \frac{1}{2}\overline{w}-\frac{1}{2}\overline{q}$.

A (possibly) more natural associative and commutative faithful representation is the following: consider the product $p(x,y)=\frac{1}{4}(xy+x^*y+xy^*+x^*y^*)$, then define in the Laurent polynomial algebra $\K[x,x^{-1}]$ the involution such that $x^*:=x^{-1}$. Now we can 
construct
$\mu\colon A_7\to\K[x,x^{-1}]$, such that
$\mu(e_1)=1$, $\mu(e_2)=x^{-1}-x$. Then
$p(1,1)=1$, and $(x^{-1}-x)^*=x-x^{-1}$ so that
$$p(x^{-1}-x,x^{-1}-x)=\frac{1}{4}\left((x^{-1}-x)^2-
2(x^{-1}-x)^2+(x^{-1}-x)^2\right)=0,$$

$$p(1,x^{-1}-x)=\frac{1}{4}\left(
2(x^{-1}-x)-2(x^{-1}-x)\right)=0.$$
In fact, it can be proved that $\CU_p\cong\K[x,x^{-1}]$ with the above $\mu$ and $p$. 
\medskip

{\bf Associative faithful representations of $A_{8,\a}$}.
\noindent

\noindent
For this algebra we find the faithful associative representation 
$\psi\colon A_{8,\a}\to {\mathcal M}_2(\K)$ given below, for the involution in the matrix algebra given by the transposition $T$ of matrices and  product 
$$p(x,y):=k_\a (xy+yx+x^Ty^T+y^Tx^T) + h_\a (xy^T+y^Tx+yx^T+x^Ty),$$
where $k_\a=\frac{1}{8} (1-16 \alpha )$ and 
$h_\a=\frac{1}{8} (1+16 \alpha )$. The representation is given by: $$\psi(e_1)=\begin{pmatrix} 1& 0\cr 0 & 1\end{pmatrix} \quad \textrm{and} \quad \psi(e_2)=\begin{pmatrix} 0 & 1/4\cr -1/4 & 0\end{pmatrix}.$$ 
In this case, a simpler faithful associative representation (relative to the same $p$ above) is $\K\times\K$  with (associative) product $(x,y)(z,t)=(xz-yt, xt+yz)$ and involution $(x,y)^*=(x,-y)$.
The faithful  representation is $\psi\colon A_{8,\a}\to \K\times\K$ with 
$\psi(e_1)=(1,0)$, $\psi(e_2)=(0,1/4)$. The nonassociative product $p$ above, particularized to this algebra gives $(x,y)\cdot (z,t)=(16\alpha ty+xz,0)$.

Next we characterize the universal associative algebra and the commutative universal algebra associated to the polynomial above.
Let $I$ be the $\ast$-ideal of $\K \langle x, y, \ast \rangle$ generated by the polynomials $k_\a (2x^2+2{x^*}^2) + h_\a(2xx^* + 2x^* x) -x $, $k_\a (2y^2+2{y^*}^2) + h_\a(2yy^* + 2y^* y) -\a x $, and $k_\a (xy+yx+x^*y^*+y^*x^*) + h_\a (xy^*+y^*x+yx^*+x^*y)$.

Using \cite{Magma}, a (noncommutative) Gr\"{o}bner basis for the ideal $I$  is 
$$\left\{ \begin{array}{ll}

\a y y^*x^*  - \a {y^*}^2x^* - \frac{1}{8} \a x^*,\\
    (\a - \frac{1}{16}) y^2 - (\a  + \frac{1}{16}) y y^* - (\a  + \frac{1}{16}) y^*y+ (\a -
        \frac{1}{16} ){y^*}^2 + \frac{1}{4} \a x^*   ,\\
    y x^* + y^*x^*,\\
    x^*y + x^*y^*, \\
    {x^*}^2 - x^*,\\
    x - x^*.
\end{array}
\right\}
$$ 

Eliminating $x^*$ using the last two polynomials above, and computing a Gr\"{o}bner basis again, we obtain 

$$\left\{
 (\a  - \frac{1}{16}) y^2 - (\a + \frac{1}{16}) y y^* - (\a + \frac{1}{16}) y^* y + (\a -
        \frac{1}{16}) {y^*}^2 + \frac{1}{4} \a x   ,
    xy + xy^* + y x + y^*x
\right\}.$$

Eliminating $x$ in the first polynomial we get: $$\omega(y,y^*):=( y^*+y)\circ \left[ \left(\frac{1}{4 \a}-4\right)(y^2+{y^*}^2) + \left(\frac{1}{4 \a} +4\right)(yy^* + y^* y)\right].$$

Hence the universal associative algebra 
is $\K\ang{y,y^*}/(\omega(y,y^*))$ with the exchange involution. The canonical monomorphism $\mu\colon A_{8,\a}\to \K\ang{y,y^*}/(\omega(y,y^*))$ is such that $\mu(e_1)=\left(\frac{1}{4 \a}-4\right)(\overline{y}^2+{\overline{y}^*}^2) + \left(\frac{1}{4 \a} +4\right)(\overline{y}\overline{y}^* + \overline{y}^* \overline{y})$ and $\mu(e_2)=\overline{y}$.

To find the universal commutative and associative algebra we assume that commutativity holds in the above. Hence  ${\CU_p}:=\K[y,y^*] /J$, where $J$ is the $\ast$-ideal of $\K[y,y^*]$ generated by the  polynomial $\left(\frac{1}{2 a}-8\right)
   {y^*}^3+\left(\frac{3}{2
   a}+8\right) {y^*}^2
   y+\left(\frac{3}{2
   a}+8\right) y^*
   y^2+\left(\frac{1}{2
   a}-8\right) y^3$.
The involution is the exchange involution and the canonical monomorphism $\mu\colon A_{8,\a}\to \K[y,y^*] /J$ is such that $\mu(e_1)=\left(\frac{1}{4 \a}-4\right)(\overline{y}^2+{\overline{}y^*}^2) + \left(\frac{1}{2 \a} +8\right)(\overline{yy^*})$ and $\mu(e_2)=\overline{y}$.


\newpage

\begin{center}
    
{\bf Multiplication table of two dimensional evolution algebras}
\end{center}

\medskip

In order to compile the results and terminology in the previous sections,
we include here a table containing representatives of the isomorphism classes as well as their multiplication basis relative to a natural basis. \medskip

\begin{center}
\begin{table}[h!]
{\renewcommand{\arraystretch}{1.3}    
\begin{tabular}{|c|ll|c|c|}
\hline
Algebra & Multiplication table & & Ideals & Pseudo-square\cr
\hline\hline
$A_0$ & \begin{tabular}{l}$e_1^2=0$\cr $e_2^2=0$\end{tabular} & & \begin{tabular}{l}$\langle \a e_1 +\b e_2 \rangle, $\cr $\a \in \K^{\times}$ or $\b \in \K^{\times}$ \end{tabular} &\xygraph{
!{<0cm,-0.5cm>;<1cm,-0.5cm>:<0cm,0.5cm>::}
!{(0,0)}*+{\bullet}="a"
!{(1,0)}*+{\bullet}="b"
!{(0,1)}*+{\bullet}="c"
!{(1,1)}*+{\bullet}="d"
}\cr 
\hline
$A_1$ & \begin{tabular}{l}$e_1^2=e_1$\cr $e_2^2=e_2$\end{tabular} & & $\langle e_1 \rangle, \langle e_2 \rangle$ &\xygraph{
!{<0cm,-0.5cm>;<1cm,-0.5cm>:<0cm,0.5cm>::}
!{(0,0)}*+{\bullet}="a"
!{(1,0)}*+{\bullet}="b"
!{(0,1)}*+{\bullet}="c"
!{(1,1)}*+{\bullet}="d"
"a"-"c" "b"-"d"}\cr 
\hline
$A_{2,\a}$ & \begin{tabular}{l}$e_1^2=e_2$\cr $e_2^2=\a e_1$\end{tabular} & $\a\in\K^\times$ & &
\xygraph{
!{<0cm,-0.5cm>;<1cm,-0.5cm>:<0cm,.5cm>::}
!{(0,0)}*+{\bullet}="a"
!{(1,0)}*+{\bullet}="b"
!{(0,1)}*+{\bullet}="c"
!{(1,1)}*+{\bullet}="d"
"a"-"b" "c"-"d"}
\cr 
\hline
$A_{3,\a}$ & \begin{tabular}{l}$e_1^2=e_1$\cr $e_2^2=\a e_1+e_2$\end{tabular} & $\a\in\K^\times$ & $\langle e_1 \rangle$ &
\xygraph{
!{<0cm,-.5cm>;<1cm,-0.5cm>:<0cm,.5cm>::}
!{(0,0)}*+{\bullet}="a"
!{(1,0)}*+{\bullet}="b"
!{(0,1)}*+{\bullet}="c"
!{(1,1)}*+{\bullet}="d"
 "b"-"d" "d"-"c" "a"-"c"}
\cr 
\hline
$A_{4,\a}$ & \begin{tabular}{l}$e_1^2=\a e_2$\cr $e_2^2=e_1+e_2$\end{tabular} & $\a\in\K^\times$ & &
\xygraph{
!{<0cm,-0.5cm>;<1cm,-0.5cm>:<0cm,.5cm>::}
!{(0,0)}*+{\bullet}="a"
!{(1,0)}*+{\bullet}="b"
!{(0,1)}*+{\bullet}="c"
!{(1,1)}*+{\bullet}="d"
 "a"-"b" "a"-"c" "c"-"d"}
\cr 
\hline
$A_{5,\a,\b}$ & \begin{tabular}{l}$e_1^2=e_1+\b e_2$\cr $e_2^2=\a e_1+e_2$\end{tabular} & $\a,\b\in\K^\times$, $\a\b\ne 1$ & &
\xygraph{
!{<0cm,-0.5cm>;<1cm,-0.5cm>:<0cm,.5cm>::}
!{(0,0)}*+{\bullet}="a"
!{(1,0)}*+{\bullet}="b"
!{(0,1)}*+{\bullet}="c"
!{(1,1)}*+{\bullet}="d"
 "a"-"b" "a"-"c" "c"-"d" "b"-"d"}
\cr 
\hline
$A_{5}$ & \begin{tabular}{l}$e_1^2=e_1-e_2$\cr $e_2^2=-e_1+e_2$\end{tabular} & & $\langle e_1 - e_2 \rangle$ &
\xygraph{
!{<0cm,-0.5cm>;<1cm,-0.5cm>:<0cm,.5cm>::}
!{(0,0)}*+{\bullet}="a"
!{(1,0)}*+{\bullet}="b"
!{(0,1)}*+{\bullet}="c"
!{(1,1)}*+{\bullet}="d"
 "a"-"b" "a"-"c" "c"-"d" "b"-"d"}
\cr
\hline
$A_{6}$ & \begin{tabular}{l}$e_1^2=0$\cr $e_2^2= e_1$\end{tabular} & & $\langle e_1 \rangle $ &
\xygraph{
!{<0cm,-0.5cm>;<1cm,-0.5cm>:<0cm,.5cm>::}
!{(0,0)}*+{\bullet}="a"
!{(1,0)}*+{\bullet}="b"
!{(0,1)}*+{\bullet}="c"
!{(1,1)}*+{\bullet}="d"
"c"-"d"}
\cr 
\hline
$A_{7}$ & \begin{tabular}{l}$e_1^2=e_1$\cr $e_2^2=0$\end{tabular} & & $\langle e_1 \rangle, \langle e_2 \rangle $ &
\xygraph{
!{<0cm,-0.5cm>;<1cm,-0.5cm>:<0cm,.5cm>::}
!{(0,0)}*+{\bullet}="a"
!{(1,0)}*+{\bullet}="b"
!{(0,1)}*+{\bullet}="c"
!{(1,1)}*+{\bullet}="d"
"a"-"c"}
\cr 
\hline
$A_{8,\a}$ & \begin{tabular}{l}$e_1^2=e_1$\cr $e_2^2=\a e_1$\end{tabular} & $\a\in\K^\times$ & $\langle e_1 \rangle$ &
\xygraph{
!{<0cm,-0.5cm>;<1cm,-0.5cm>:<0cm,.5cm>::}
!{(0,0)}*+{\bullet}="a"
!{(1,0)}*+{\bullet}="b"
!{(0,1)}*+{\bullet}="c"
!{(1,1)}*+{\bullet}="d"
 "a"-"c" "c"-"d"}
\cr 
\hline 
\end{tabular}
}
\caption{Two dimensional evolution algebras.}
\end{table}
\end{center}

\appendix
\bigskip

\section{}

In this appendix we include some Magma \cite{Magma} codes proving that some specific universal associative representation algebras are zero. Concretely we consider the field $Q$ of rational functions on the indeterminates $\a,\b,\lambda_i$ ($i=1,2,3,4$), with coefficients in $\Q$. Then, we consider the free associative algebra $Q\langle x,y,*\rangle$, the product $p(x,y)=\lambda_1 (xy+yx)+\lambda_2(xy^*+yx^*)+\lambda_3(y^*x+x^*y)+\lambda_4(x^*y^*+y^*x^*)$ and the $*$-ideal $I$ generated by the elements $p(x,x)-\omega_{11}x-\omega_{21} y$, $p(y,y)-\omega_{12} x-\omega_{22}y$, $p(x,y)$ and $p(y,x)$. The structure constants $\omega_{ij}$ are replaced in each case for their corresponding values.

\bigskip
 \textbf{Code for $A_{3,\a}$}
\bigskip

We can prove that there is not a commutative faithful universal representation. Indeed, computing a Gr\"oebner basis of the ideal $I+J$ where $J$ is the ideal of 
$Q\langle x,y,*\rangle$ generated by $xx^*-x^*x$ and 
$yy^*-y^*y$, we find that $x,x^*\in I+J$. We include here the Magma (\cite{Magma}) code, where $z=x^*$ and $t=y^*$:
\vspace{0.5cm}

{\small\begin{verbatim}
 Q<a,l1,l2,l3,l4> := FunctionField(Rationals(),5);
F<x,y,z,t> := FreeAlgebra(Q,4);

B := [2*l1*x*x+2*l2*x*z+2*l3*z*x+2*l4*z*z-x,
2*l1*z*z+2*l2*x*z+2*l3*z*x+2*l4*x*x-z,
2*l1*y*y+2*l2*y*t+2*l3*t*y+2*l4*t*t-a*x-y,
2*l1*t*t+2*l2*y*t+2*l3*t*y+2*l4*y*y-a*z-t,
l1*(x*y+y*x)+l2*(x*t+y*z)+l3*(t*x+z*y)+l4*(z*t+t*z),
l1*(z*t+t*z)+l2*(x*t+y*z)+l3*(t*x+z*y)+l4*(x*y+y*x)
];

D :=[x*z-z*x, y*t-t*y];
 I := ideal<F | B cat D>;
 GroebnerBasis(I);

[
    t^3 + (-l1*l2 - l1*l3 - 1/2*l1*l4 - 1/2*l2^2 - l2*l3 - 1/2*l2*l4 - 1/2*l3^2
        - 1/2*l3*l4 - 1/2*l4^2)/(l1^2*l2 + l1^2*l3 + l1*l2^2 + 2*l1*l2*l3 +
        l1*l3^2 - l2^2*l4 - 2*l2*l3*l4 - l2*l4^2 - l3^2*l4 - l3*l4^2)*t^2 +
        (-1/4*l1*l2*l4 - 1/4*l1*l3*l4 - 1/4*l1*l4^2 - 1/4*l4^3)/(l1^4*l2 +
        l1^4*l3 - l1^2*l2^3 - 3*l1^2*l2^2*l3 - 3*l1^2*l2*l3^2 - 2*l1^2*l2*l4^2 -
        l1^2*l3^3 - 2*l1^2*l3*l4^2 + 2*l1*l2^3*l4 + 6*l1*l2^2*l3*l4 +
        6*l1*l2*l3^2*l4 + 2*l1*l3^3*l4 - l2^3*l4^2 - 3*l2^2*l3*l4^2 -
        3*l2*l3^2*l4^2 + l2*l4^4 - l3^3*l4^2 + l3*l4^4)*y + (1/4*l1^2*l2 +
        1/4*l1^2*l3 + 1/4*l1^2*l4 - 1/4*l1*l2^2 - 1/2*l1*l2*l3 - 1/4*l1*l3^2 +
        1/4*l1*l4^2 + 1/4*l2^2*l4 + 1/2*l2*l3*l4 + 1/4*l3^2*l4)/(l1^4*l2 +
        l1^4*l3 - l1^2*l2^3 - 3*l1^2*l2^2*l3 - 3*l1^2*l2*l3^2 - 2*l1^2*l2*l4^2 -
        l1^2*l3^3 - 2*l1^2*l3*l4^2 + 2*l1*l2^3*l4 + 6*l1*l2^2*l3*l4 +
        6*l1*l2*l3^2*l4 + 2*l1*l3^3*l4 - l2^3*l4^2 - 3*l2^2*l3*l4^2 -
        3*l2*l3^2*l4^2 + l2*l4^4 - l3^3*l4^2 + l3*l4^4)*t,
    y^2 - t^2 - 1/2/(l1 - l4)*y + 1/2/(l1 - l4)*t,
    y*t + (l1 + l4)/(l2 + l3)*t^2 + 1/2*l4/(l1*l2 + l1*l3 - l2*l4 - l3*l4)*y -
        1/2*l1/(l1*l2 + l1*l3 - l2*l4 - l3*l4)*t,
    t*y + (l1 + l4)/(l2 + l3)*t^2 + 1/2*l4/(l1*l2 + l1*l3 - l2*l4 - l3*l4)*y -
        1/2*l1/(l1*l2 + l1*l3 - l2*l4 - l3*l4)*t,
    x,
    z
]
\end{verbatim}
}
This computation shows that $x$ and $z=x^\ast$ are in the ideal $I$, therefore the corresponding representation is not faithful. This comment can be taken into account for the rest of the section.

\medskip
 \textbf{Code for $A_{4,\a}$}
\medskip

The Magma (\cite{Magma}) code below, shows that $I$ is the whole algebra, $I=Q\langle x,y,*\rangle$ so that the universal associative algebra is zero in this case.
\vspace{0.5cm}

{\small\begin{verbatim}
Q<a,l1,l2,l3,l4> := FunctionField(Rationals(),5);
F<x,y,z,t> := FreeAlgebra(Q,4);

/* l1(xy+yx)+l2(xy*+yx*)+l3(y*x+x*y)+l4(x*y*+y*x*)*/

B := [2*l1*x*x+2*l2*x*z+2*l3*z*x+2*l4*z*z-a*y,
2*l1*z*z+2*l2*x*z+2*l3*z*x+2*l4*x*x-a*t,
2*l1*y*y+2*l2*y*t+2*l3*t*y+2*l4*t*t-x-y,
2*l1*t*t+2*l2*y*t+2*l3*t*y+2*l4*y*y-z-t,
l1*(x*y+y*x)+l2*(x*t+y*z)+l3*(t*x+z*y)+l4*(z*t+t*z),
l1*(z*t+t*z)+l2*(x*t+y*z)+l3*(t*x+z*y)+l4*(x*y+y*x)
];

 I := ideal<F | B>;
 GroebnerBasis(I);
 
 [
    x,
    y,
    z,
    t
]}


    
\end{verbatim}
\medskip

 \textbf{Code for $A_{5,\a,\b}, \a\neq \b$}
\medskip

The Magma (\cite{Magma}) code below, shows that $I$ is the whole algebra, $I=Q\langle x,y,*\rangle$ so that the universal associative algebra is zero in this case.

{\small\begin{verbatim}


Q<a,b,l1,l2,l3,l4> := FunctionField(Rationals(),6);

F<x,y,z,t> := FreeAlgebra(Q,4);

B := [2*l1*x*x+2*l2*x*z+2*l3*z*x+2*l4*z*z-x-b*y,
2*l1*z*z+2*l2*x*z+2*l3*z*x+2*l4*x*x-z-b*t,
2*l1*y*y+2*l2*y*t+2*l3*t*y+2*l4*t*t-a*x-y,
2*l1*t*t+2*l2*y*t+2*l3*t*y+2*l4*y*y-a*z-t,
l1*(x*y+y*x)+l2*(x*t+y*z)+l3*(t*x+z*y)+l4*(z*t+t*z),
l1*(z*t+t*z)+l2*(x*t+y*z)+l3*(t*x+z*y)+l4*(x*y+y*x)
];

 I := ideal<F | B>;
 
 GroebnerBasis(I);

[
    x,
    y,
    z,
    t
]
\end{verbatim}}

\section*{Acknowledgments}
The second author was partially supported by Conselho Nacional de Desenvolvimento Cient\'ifico e Tecnol\'ogico (CNPq) - Brasil.
The last three authors are supported by the Junta de Andaluc\'{\i}a and Fondos FEDER, jointly, through projects  FQM-336 and FQM-7156 and also by the Spanish Ministerio de Econom\'ia y Competitividad and Fondos FEDER, jointly, through project  MTM2016-76327-C3-1-P.


\end{document}